\newif\ifRSPA
\RSPAfalse

\newif\ifLTEX
\LTEXtrue

\ifRSPA
\documentclass[openacc]{rsproca_new}
\else
\documentclass[a4paper]{amsart}
\fi

\usepackage{mathtools,amssymb}
\usepackage{xparse}

\ifLTEX
\usepackage{graphicx}
\else
\usepackage[dvipdfmx]{graphicx}
\fi
 
\usepackage{hyperref}
\usepackage{subcaption}
\captionsetup[subfigure]{labelfont=rm}
\usepackage{enumitem}
\usepackage{amsthm}
\usepackage{xspace}

\ifRSPA
\else
\usepackage{newtxtext,newtxmath}
\fi

\pagestyle{headings}

\newtheoremstyle{nclaim}
  {6pt}   
  {\topsep}   
  {\itshape}  
  {0pt}       
  {\bfseries} 
  {.}         
  {5pt plus 1pt minus 1pt} 
  {\thmname{#1}~\thmnumber{#2}\thmnote{\hspace{2pt}(#3)}}

\newtheorem{thm}{Theorem}[section]
\newtheorem{lem}[thm]{Lemma}
\newtheorem{lemma}[thm]{Lemma}
\newtheorem{cor}[thm]{Corollary}
\newtheorem{prop}[thm]{Proposition}

\newtheorem{claim}{Claim}

\theoremstyle{definition}
\newtheorem{dfn}[thm]{Definition}
\newtheorem{rem}[thm]{Remark}

\newtheorem{obs}[thm]{Observation}
\newtheorem{ex}[thm]{Example}

\newcommand{\R}{\mathbb{R}}
\newcommand{\inte}{\mathrm{int}}
\newcommand{\uprime}{^\prime}
\newcommand{\upr}{\uprime}
\newcommand{\dupr}{^{\prime\prime}}

\newcommand{\csum}{\mathbin{\#}}
\newcommand{\wt}[1]{\widetilde{#1}}
\newcommand{\wtP}{\widetilde{P}}

\newcommand{\fund}{\pi_1}
\newcommand{\Torus}{T^3}
\newcommand{\Rth}{\R^3}

\newcommand{\patt}{net-like pattern\xspace}%
\newcommand{\patts}{net-like patterns\xspace}%
\newcommand{\ssn}[1]{^{(#1)}}

\newcommand{\tzmove}{$\text{$2$-$0$}$ move\xspace}
\newcommand{\tTmove}{$\text{$2$-$3$}$ move\xspace}

\newcommand{\auAd}[1]{$^{#1}$}

\begin{document}

\ifRSPA
\title{Handlebody decompositions of 3-manifolds and polycontinuous patterns}
\author{N.~Sakata\auAd{1}, R.~Mishina\auAd{1}, M.~Ogawa\auAd{1}, K.~Ishihara\auAd{2}, Y.~Koda\auAd{3}, M.~Ozawa\auAd{4}, and K.~Shimokawa\auAd{1}}

\address{%
  \auAd{1}Department of Mathematics, Saitama University, Saitama, 338-8570, Japan\\
  \auAd{2}Department of Education, Yamaguchi University, Yamaguchi, 753-8511, Japan\\
  \auAd{3}Department of Mathematics, Hiroshima University, Hiroshima, 739-8511, Japan\\
  \auAd{4}Department of Natural Sciences, Faculty of Arts and Sciences, Komazawa University, Tokyo, 154-8525, Japan
}
\subject{topology, materials science, applied mathematics}
\else

\title{Handlebody Decompositions of $3$-manifolds and Polycontinuous Patterns}
\author[N. Sakata]{Naoki Sakata\auAd{1}}
\author[R. Mishina]{Ryosuke Mishina\auAd{1}}
\author[M. Ogawa]{Masaki Ogawa\auAd{1}}
\author[K. Ishihara]{Kai Ishihara\auAd{2}}
\author[Y. Koda]{Yuya Koda\auAd{3}}
\author[M. Ozawa]{Makoto Ozawa\auAd{4}}
\author[K. Shimokawa]{Koya Shimokawa\auAd{1}}
\address{\auAd{1}Department of Mathematics, Saitama University, Saitama, 338-8570, Japan}
\email{sakata@casis.sakura.ne.jp; m.ogawa.691@ms.saitama-u.ac.jp; kshimoka@rimath.saitama-u.ac.jp}
\address{\auAd{2}Department of Education, Yamaguchi University, Yamaguchi, 753-8511, Japan}
\email{kisihara@yamaguchi-u.ac.jp}
\address{\auAd{3}Graduate School of Advanced Science and Engineering, Hiroshima University, Hiroshima, 739-8511, Japan}
\email{ykoda@hiroshima-u.ac.jp}
\address{\auAd{4}Faculty of Arts and Sciences, Komazawa University, Tokyo, 154-8525, Japan}
\email{w3c@komazawa-u.ac.jp}
\date{}

\subjclass[2020]{Primary 57K30; Secondary 57Z15}
\fi

\keywords{3-manifold, handlebody decomposition, polycontinuous pattern}

\begin{abstract}
\ifRSPA
We introduce the concept of a handlebody decompo-\\
sition of a 3-manifold,
\else
We introduce the concept of a handlebody decomposition of a 3-manifold,
\fi
a generalization of a Heegaard splitting, or a trisection.
We show that two handlebody decompositions of a closed orientable 3-manifold are stably equivalent.
As an application to materials science, we consider a mathematical model of polycontinuous patterns and discuss a topological study of microphase separation of a block copolymer melt.
\end{abstract}

\ifRSPA
\else
\maketitle
\fi

\ifRSPA
\begin{fmtext}
\fi
	
\section{Introduction}
A {\em Heegaard splitting} is a decomposition of a closed orientable 3-manifold into two handlebodies of the same genus.
It is well-known that every closed orientable 3-manifold admits a Heegaard splitting.
By Reidemeister-Singer's theorem~\cite{Reidemeister,Singer},
two Heegaard splittings of a given 3-manifold are stably equivalent, i.e., isotopic after a finite number of stabilizations.

\ifRSPA
Many generalizations of Heegaard splittings have been investigated.
G{\'o}mez-Larra{\~n}aga~\cite{Gomez87} studied orientable $3$-manifolds decomposed into three solid tori.
Coffey and Rubinstein analyzed orientable $3$-manifolds built from three $\pi_1$-injective handlebodies~\cite{Coffey}.
In~\cite{Koenig2018}, Koenig considered a \emph{trisection} of a closed orientable $3$-manifold,
\end{fmtext}

\noindent
which is an embedded branched surface decomposing the manifold into three handlebodies with connected pairwise intersections.
Koenig introduced the notion of stabilization for a trisection and showed an analogue of Reidemeister-Singer's theorem for trisections of 3-manifolds.
\maketitle

\else

Many generalizations of Heegaard splittings have been investigated.
G{\'o}mez-Larra{\~n}aga~\cite{Gomez87} studied orientable $3$-manifolds decomposed into three solid tori.
Coffey and Rubinstein analyzed orientable $3$-manifolds built from three $\pi_1$-injective handlebodies~\cite{Coffey}.
In~\cite{Koenig2018}, Koenig considered a \emph{trisection} of a closed orientable $3$-manifold,
which is an embedded branched surface decomposing the manifold into three handlebodies with connected pairwise intersections.
Koenig introduced the notion of stabilization for a trisection and showed an analogue of Reidemeister-Singer's theorem for trisections of 3-manifolds.

\fi


In this paper, we consider a generalization of all of the above.
We define a \emph{handlebody decomposition} to be a decomposition of a closed orientable $3$-manifold into a finite number of handlebodies (see Definition~\ref{dfn:handlebody_decomposition} for the detailed definition).
We will also introduce \emph{stabilizations} for handlebody decompositions and show an analogue of Reidemeister-Singer's theorem for handlebody decompositions (see Theorem~\ref{thm:stabeq}).

The primary motivation of this study comes from materials science.
We are interested in the characterization of \emph{bicontinuous patterns}, \emph{tricontinuous patterns}, and \emph{polycontinuous patterns} of microphase separation of a block copolymer melt
\ifRSPA
(see Section~\ref{sec:patts}\ref{subsec:polymer}).
\else
(see Section~\ref{subsec:polymer}).
\fi
See \cite{Hyde2,Hyde1} for related research.
A mathematical model of a bicontinuous (resp. tricontinuous or polycontinuous) pattern is a triply periodic non-compact surface (resp. tribranched surface or polyhedron) embedded in $\mathbb R^3$ that divides it into two (resp. three or a finite number of) possibly disconnected submanifolds as shown in Figure~\ref{fig:intro} (see Definition~\ref{dfn:polyconti} for more details). 
We are particularly interested in the case where the submanifolds are the open neighborhood of networks.

If a bicontinuous pattern is triply periodic, then by considering the quotient of the action, the pattern induces a Heegaard splitting of the 3-dimensional torus $T^3$ (see Remark~\ref{rem:biconti}).
If a polycontinuous pattern is triply periodic and satisfies suitable conditions, then it corresponds to a handlebody decomposition of $T^3$ (Corollary~\ref{cor:effective_pattern2decomp}).
Hence a characterization of handlebody decompositions of $T^3$ gives that of triply periodic polycontinuous patterns.
The Reidemeister-Singer-type theorem of polycontinuous patterns (Corollary~\ref{cor:pattern_stabilization_theorem}) follows from that of handlebody decompositions of $T^3$.
This point of view allows us to explain how two polycontinuous patterns are related,
\ifRSPA
which will be discussed in Section~\ref{sec:patts}\ref{subsec:polymer}.
\else
which will be discussed in Section~\ref{subsec:polymer}.
\fi

This paper is organized as follows.
In Section 2, we define a handlebody decomposition of a 3-manifold.
In Section 3, we introduce several types of stabilization operations of handlebody decompositions and prove an analogue of Reidemeister-Singer's theorem for them.
In Section 4, we particularly focus on decompositions of 3-manifolds into three handlebodies.
In Section 5, we study a mathematical model of polycontinuous patterns. We define polycontinuous patterns and, more generally, net-like patterns.
The correspondence between triply periodic net-like patterns and handlebody decompositions of $T^3$ is given.
In Section 6, we discuss stabilizations of net-like patterns. 
We also present how this research relates to the subject of materials science.
In Section 7, we give characterizations of net-like patterns.

\begin{figure}[htbp]
    \centering
    \includegraphics[width=0.30\textwidth]{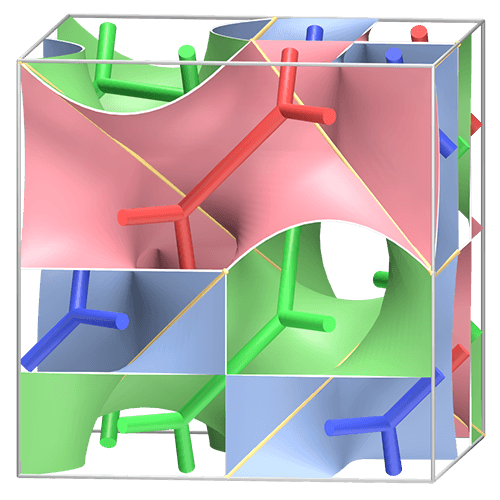}
    \includegraphics[width=0.30\textwidth]{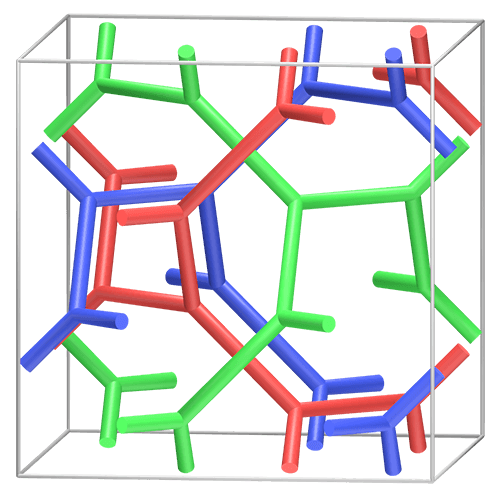}
    \caption{A tricontinuous pattern and an entangled network.}
    \label{fig:intro}
\end{figure}

\section{Handlebody decompositions of 3-manifolds}

We work in the piecewise linear category throughout this paper.

\subsection{Definition of handlebody decompositions of 3-manifolds with simple polyhedra}

By a \emph{2-dimensional polyhedron} $P$, we mean the underlying space of a non-collapsible locally finite 2-dimensional complex such that the link of each vertex contains no isolated vertices.
A connected component of the set of points of $P$ having neighborhoods homeomorphic to disks is called a \emph{sector}.
The set of all points not contained in the sectors is called its \emph{singular graph}.
A 2-dimensional polyhedron $P$ is said to be \emph{simple} if, after giving a structure of a complex in a suitable way, the link of each point in $P$ is homeomorphic to one of the three models shown in Figure~\ref{fig:simple_poly}.
A point whose link is homeomorphic to the model (c) is called a \emph{vertex} of its singular graph.
See Matveev~\cite{Matveev} for more details.

\begin{figure}[htbp]
    \centering
    \ifRSPA
    \begin{minipage}[b]{0.24\linewidth}
        \centering
        \includegraphics[width=.50\textwidth]{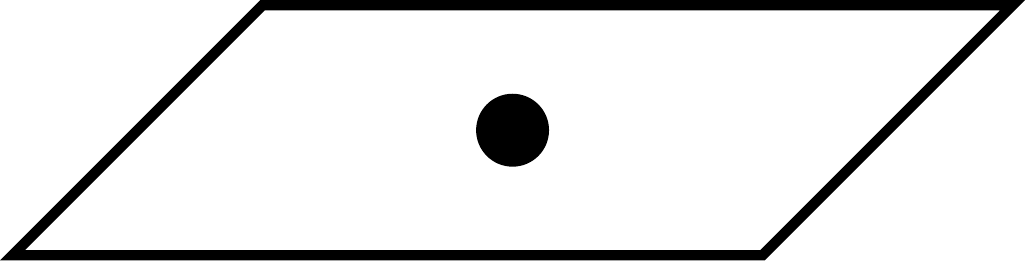}\\
        \vspace{3.0mm}

        \noindent
        (a) a nonsingular point
    \end{minipage}
    \else
    \begin{minipage}[b]{0.25\linewidth}
        \centering
        \includegraphics[width=.50\textwidth]{figure/nonsingular}\\
        \vspace{3.0mm}

        \noindent
        (a) a nonsingular point
    \end{minipage}
    \fi
    \begin{minipage}[b]{0.24\linewidth}
        \centering
        \includegraphics[width=.50\textwidth]{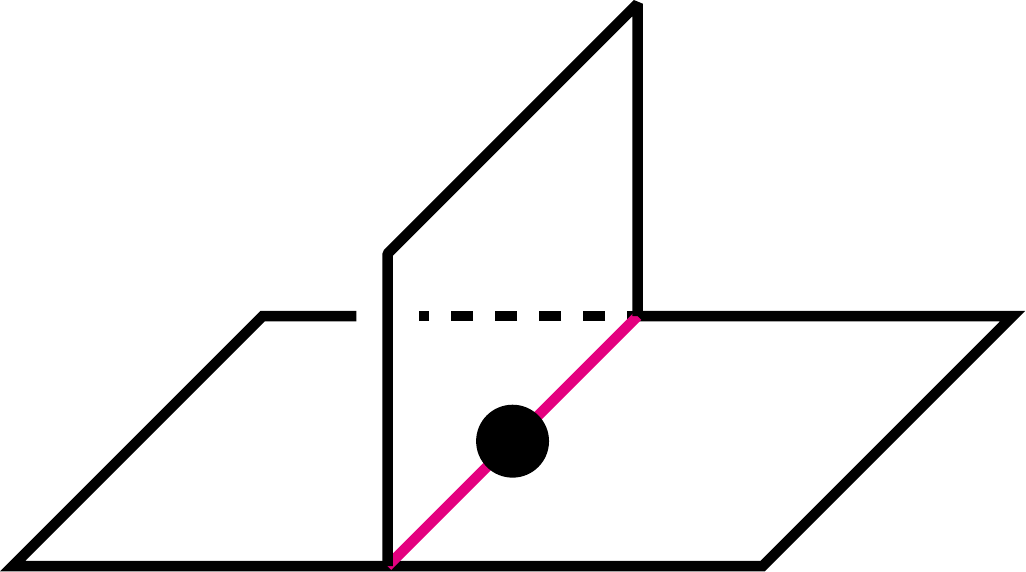}\\
        \vspace{3.0mm}

        \noindent
        (b) a triple point
    \end{minipage}
    \begin{minipage}[b]{0.24\linewidth}
        \centering
        \includegraphics[width=.50\textwidth]{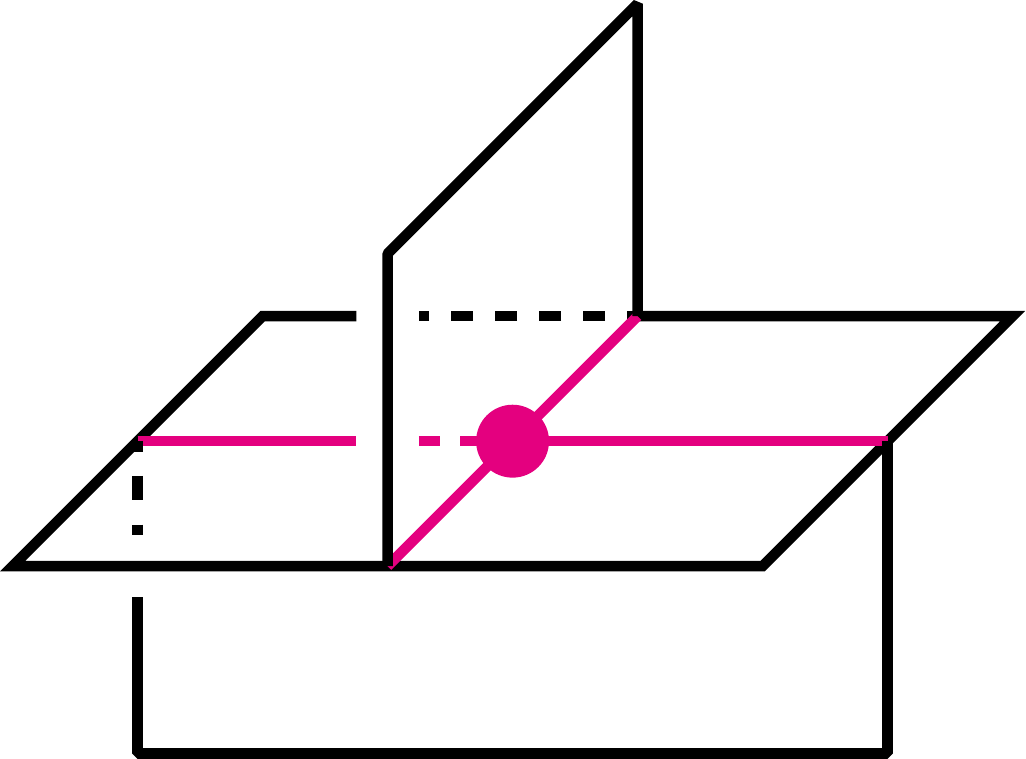}\\
        (c) a vertex
    \end{minipage}
    \caption{A neighborhood of each point of a simple polyhedron.}
    \label{fig:simple_poly}
\end{figure}

\begin{dfn}[Handlebody decomposition]\label{dfn:handlebody_decomposition}
    Let $M$ be a closed, connected, orientable $3$-manifold,
    and $P$ a connected compact $2$-dimensional polyhedron embedded in $M$.
    We call $(H_1, H_2, \ldots, H_n; P)$ a \emph{type-$(g_1, g_2, \ldots, g_n)$ handlebody decomposition} of $M$ if $M \setminus P = \bigsqcup_{i = 1}^{n} H_i$, where $H_i$ is the interior of a handlebody of genus $g_i$.
    The polyhedron $P$ is called a \emph{partition} for the decomposition.
    A handlebody decomposition is said to be \emph{proper} if there is no simple closed curve in $M \setminus B$ that intersects a sector of $P$ transversely once, where $B$ is the singular graph of $P$.
    A handlebody decomposition is said to be \emph{simple} if its partition is a simple polyhedron.
\end{dfn}

\begin{rem}\label{rem:cl_Hdecomp}
    (1) Let $(H_1, H_2, \ldots, H_n; P)$ be a type-$(g_1, g_2, \ldots, g_n)$ handlebody decomposition of $M$, and $W_i$ a handlebody of genus $g_i$ for $i = 1, 2, \ldots, n$.
    Then there exists a continuous map $\iota_i: W_i \to M$ such that the restriction of $\iota_i$ to the interior of $W_i$ is a homeomorphism to $H_i$.
    Then we have $\iota_i(W_i) \cap P = \iota_i(\partial W_i) \cap P$.
    Suppose that the handlebody decomposition is proper.
    Then for the closure $F$ of each sector, there exists a pair of handlebodies $(W_i, W_j)$ $(i \neq j)$ such that $F \subset \iota_i(\partial W_i) \cap \iota_j(\partial W_j)$.
    We denote the union of all such surfaces $F$ by $F_{ij}$.
    (Note that $F_{ij} = F_{ji}$.)

    (2) 
    In general, $\iota_i$ may not be injective on the singular graph of $P$.
    If the decomposition is simple and proper, then $\iota_i$ is a homeomorphism.
\end{rem}

The notion of handlebody decompositions generalises both Heegaard splittings~\cite{Johannson} and trisections~\cite{Koenig2018} of closed orientable 3-manifolds.
In fact, a simple proper handlebody decomposition with $n = 2$ is nothing but a Heegaard splitting, while that with $n = 3$, where each $F_{ij}$ is connected, is a trisection.
By~\cite{Casler}, any closed, connected, $3$-manifold $M$ admits a simple (non-proper) type-$(0)$ handlebody decomposition.
Therefore, it is easily seen that for any sequence $(g_1, \ldots, g_n)$ of non-negative integers, there exists a simple (possibly non-proper) type-$(g_1, \ldots, g_n)$ handlebody decomposition of $M$.

%


\section{Stable equivalence}
\label{sec:stab}

This section discusses the stable equivalence of simple proper handlebody decompositions of a $3$-manifold.
We assume that a handlebody decomposition is simple and proper throughout this section.
By Remark~\ref{rem:cl_Hdecomp}, for a handlebody decomposition $(H_1, \ldots, H_n; P)$, there exist handlebodies $W_1, \ldots, W_n$ and continuous maps $\iota_1, \ldots, \iota_n$ such that the restriction of each $\iota_i$ to the interior of $W_i$ is an embedding $\inte(W_i) \to H_i$.
For simplicity, we regard $H_i$ as $\iota_i(W_i)$ and $\partial H_i$ as $\iota_i(\partial W_i)$.
Then the intersection of $H_i$ and $H_j$ is a possibly disconnected surface with boundary.
We denote it by $F_{ij}$.

\subsection{Stabilizations and destabilizations of handlebody decompositions}

The following operations for handlebody decompositions are a generalization of the ``stabilization'' for Heegaard splittings.

\begin{dfn}\label{dfn:stabilization}
Let $(H_1,\ldots,H_n; P)$ be a simple proper type-$(g_1, \ldots, g_n)$ handlebody decomposition of a closed, connected, orientable $3$-manifold $M$.
\begin{enumerate}[label=(\arabic*),align=parleft,leftmargin=*,itemindent=14.21pt,itemsep=1mm,start=0]
\item 
    Take a properly embedded arc $\alpha$ in $H_i$, and an arc $\beta$ in $\partial H_i$ such that the endpoints of $\alpha$ lie in the interior of $F_{ij}$, and $\alpha$ is parallel to $\beta$ in $H_i$ relative to the endpoints, i.e., the endpoints of $\alpha$ is equal to that of $\beta$, and $\alpha \cup \beta$ bounds a disk in $H_i$.
Then we get a type-$(g\uprime_1, \ldots, g\uprime_n)$ handlebody decomposition $(H\uprime_1,\ldots,H\uprime_n)$ of $M$ with
\begin{align*}
    g\uprime_l & =\begin{cases}
        g_i+1 & (l=i)\\
        g_j+1 & (l=j)\\
        g_l & (l\neq i,j)
    \end{cases} & 
        H\uprime_l &= \begin{cases}
        H_i\setminus \inte(N(\alpha)) & (l=i)\\
        H_j\cup N(\alpha) & (l=j)\\
        H_l & (l\neq i,j)
    \end{cases},
\end{align*}
where $N(\alpha)$ and $\inte(N(\alpha))$ are a regular neighborhood of $\alpha$ and its interior in $H_i$, respectively.
We call this operation a \emph{type-$0$ stabilization} (along $\alpha$).
Conversely, we assume that there exist properly embedded disks $D_j$ of $H_j$ and $E$ in $H_i$ such that the boundary of $D_j$ is in $F_{ij}$, and the boundary of $D_j$ intersects that of $E$ transversely exactly one point.
Then we can perform the inverse operation of a type-$0$ stabilization. 
We call this operation a \emph{type-$0$ destabilization} (along $D_j$).
See Figure~\ref{fig:stabilizations}(a).

\item 
Take a properly embedded arc $\alpha$ on $F_{jk}$ such that the endpoints of $\alpha$ lie in the boundary of $H_{i}$ for $i\neq j, k$.
Then we get a type-$(g\uprime_1, \ldots, g\uprime_n)$ handlebody decomposition $(H\uprime_1,\ldots,H\uprime_n)$ of $M$ with
\begin{align*}
    g\uprime_l & =\begin{cases}
        g_i+1 & (l=i)\\
        g_l & (l\neq i)
    \end{cases} & 
        H\uprime_l & =\begin{cases}
        H_i\cup N(\alpha) & (l=i)\\
        H_l\setminus \inte(N(\alpha)) & (l=j, k)\\
        H_l & (l\neq i,j,k)
    \end{cases}.
\end{align*}
We call this operation a \emph{type-$1$ stabilization} (along $\alpha$).
Conversely, if there exists a non-separating disk $D_{i}$ of $H_{i}$ such that the boundary of $D_i$ intersects the singular graph of the partition $P$ transversely exactly two points, then we can perform the inverse operation of a type-$1$ stabilization. 
We call this operation a \emph{type-$1$ destabilization} (along $D_i$).
See Figure~\ref{fig:stabilizations}(b).

\item 
Take two points on the interior of $F_{ij}$ and that of $F_{ik}$ for $j \neq k$,
and we connect the points by a properly embedded arc $\alpha$ in $H_i$.
Let $\beta$ be an arc in $\partial H_i$ such that $\alpha$ is parallel to $\beta$.
Then we get a type-$(g\uprime_1, \ldots, g\uprime_n)$ handlebody decomposition $(H\uprime_1,\ldots,H\uprime_n)$ of $M$ with
\begin{align*}
    g\uprime_l &= \begin{cases}
        g_i+1&(l=i)\\
        g_l&(l\neq i)\\
    \end{cases} & 
        H\uprime_l &= \begin{cases}
        H_i\setminus \inte(N(\alpha)) & (l=i)\\
        H_j\cup N(\alpha) & (l=j)\\
        H_l & (l\neq i,j)\\
    \end{cases}.
\end{align*}
We call this operation a \emph{type-$2$ stabilization} (along $\alpha$).
Conversely, if there exists a disk component $D_{jk}$ of $F_{jk}$ whose boundary intersects a properly embedded non-separating disk in $H_{i}$ transversely once, then we can perform the inverse operation of a type-$2$ stabilization.
We call this operation a \emph{type-$2$ destabilization} (along $D_{jk}$).
See Figure~\ref{fig:stabilizations}(c).
\end{enumerate}
\end{dfn}


\begin{figure}[htbp]
    \centering
    \begin{minipage}[b]{0.32\linewidth}
        \centering
        \includegraphics[width=.70\textwidth]{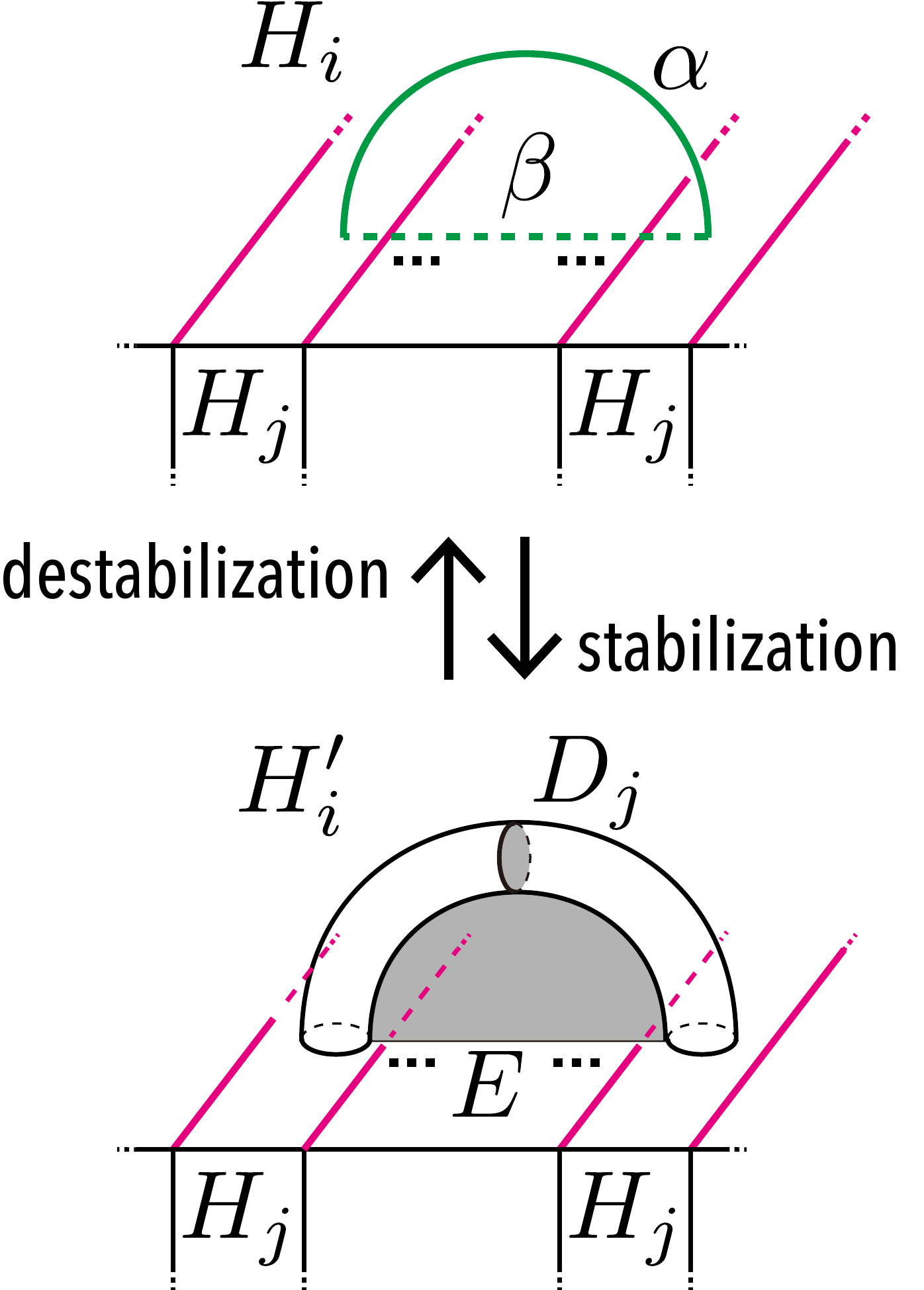}\\
        (a) Type-$0$
    \end{minipage}
    \begin{minipage}[b]{0.32\linewidth}
        \centering
        \includegraphics[width=.70\textwidth]{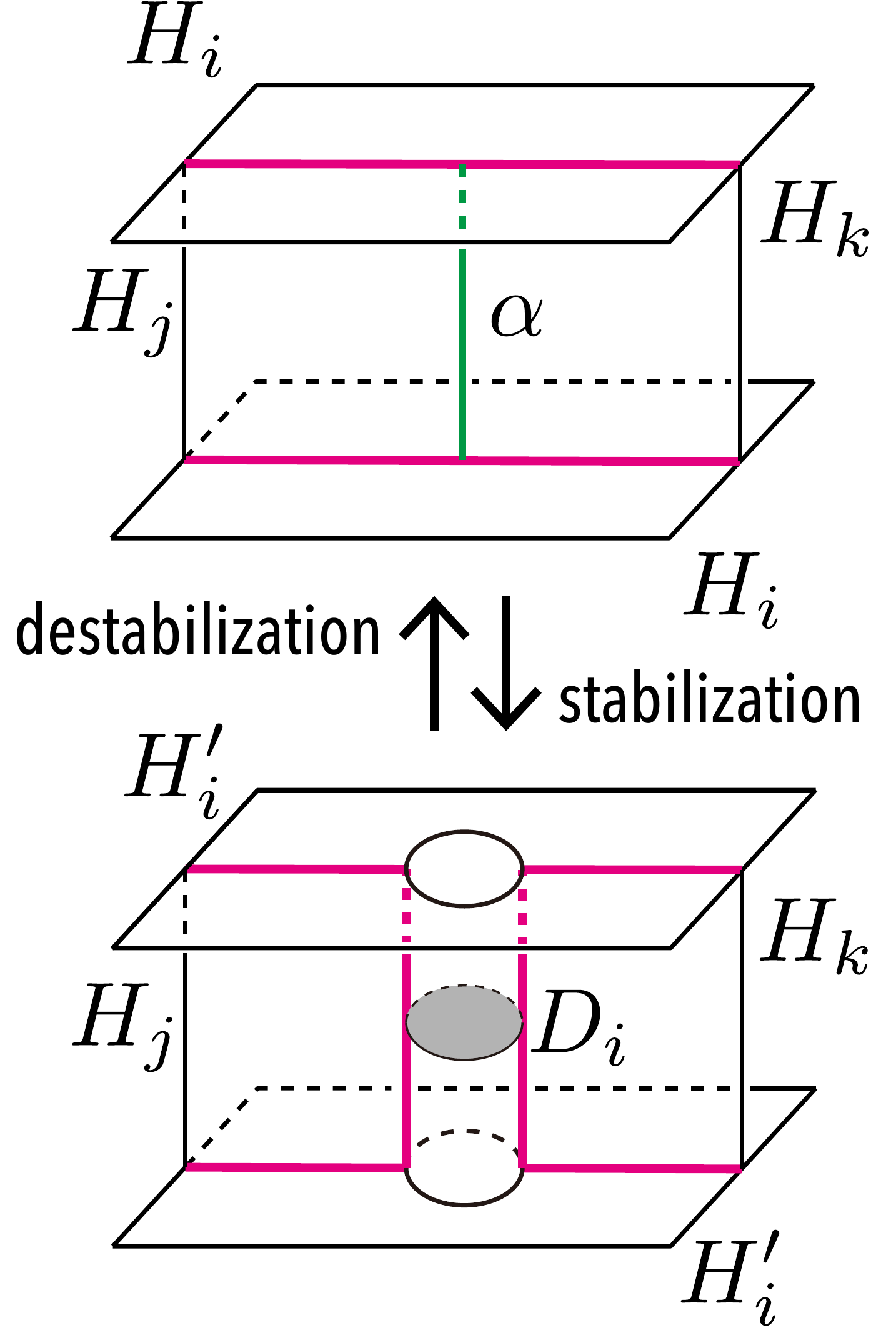}\\
        (b) Type-$1$
    \end{minipage}
    \begin{minipage}[b]{0.32\linewidth}
        \centering
        \includegraphics[width=.70\textwidth]{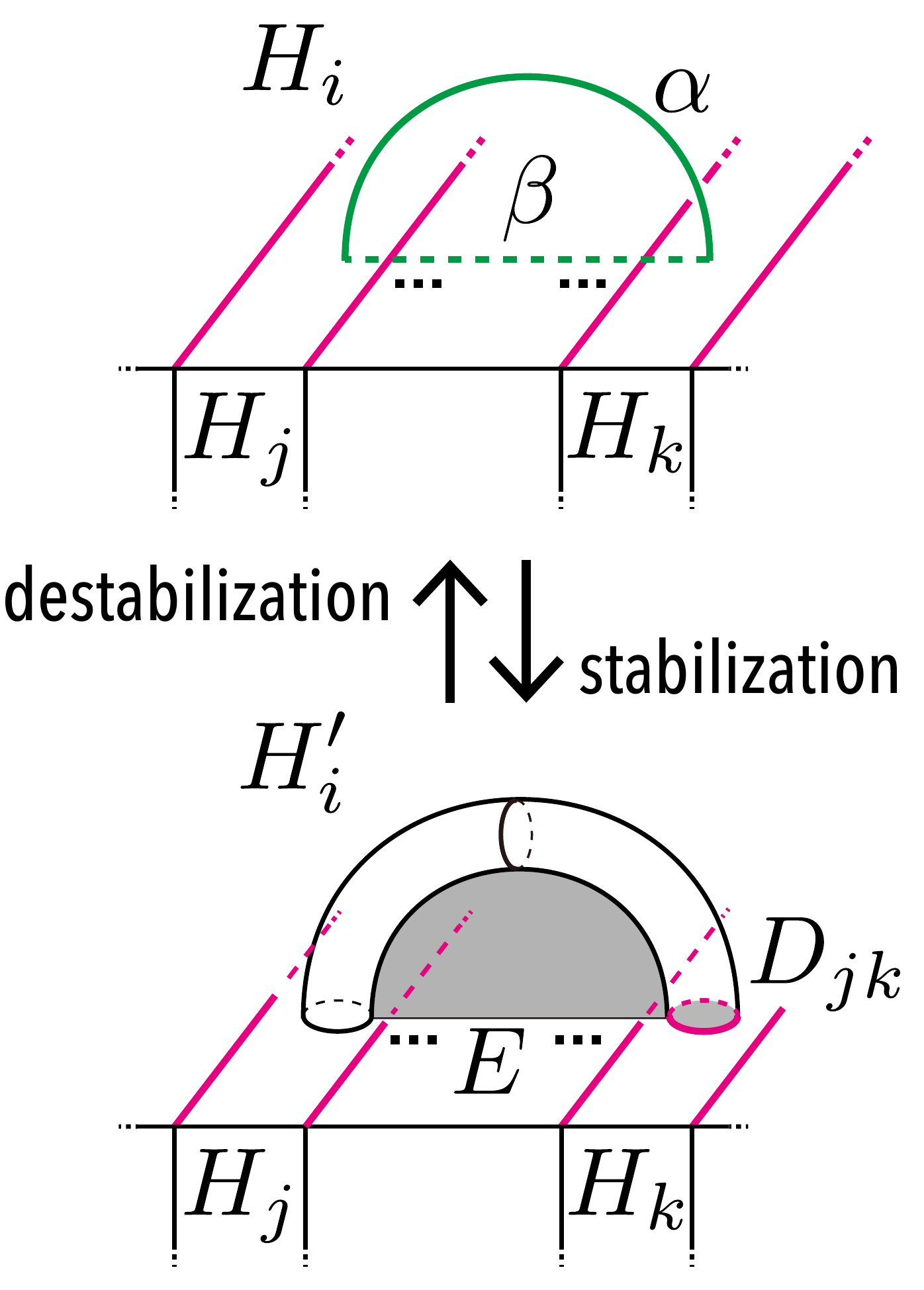}\\
        (c) Type-$2$
    \end{minipage}
\caption{Stabilizations and destabilizations. Red curves represent the singular graphs.}
\label{fig:stabilizations}
\end{figure}

\begin{rem}\label{rem:higher_type_by_type1}
    Consider a type-$(g_1, g_2, \ldots, g_n)$ handlebody decomposition of a closed, connected, orientable $3$-manifold $M$ with $3 \leq n$.
    For every $g_i \leq g\uprime_i$, we can obtain a type-$(g\uprime_1, g\uprime_2, \ldots, g\uprime_n)$ handlebody decomposition of $M$ by performing type-$1$ stabilizations repeatedly in a suitable way.
\end{rem}

\begin{dfn}
    A handlebody decomposition is said to be \emph{stabilized} if it is obtained from another handlebody decomposition by a stabilization.
\end{dfn}

When $n = 2$, a type-$0$ stabilization is nothing but a stabilization of Heegaard splittings.
\ifRSPA
In Supplementary Information, we discuss the independence of these stabilizations.
\else
In Appendix~\ref{app:independence}, we discuss the independence of these stabilizations.
\fi

\subsection{Stable equivalence theorem}

This subsection will generalize Koenig's argument~\cite{Koenig2018} on the stable equivalence of decompositions.
We first recall the following operations on simple polyhedra embedded in a closed orientable $3$-manifold introduced by Matveev~\cite{Matveev} and Piergallini~\cite{Piergallini} under our setting.

\begin{dfn}\label{dfn:moves}
Let $P$ be the partition of a handlebody decomposition of $M$.
\begin{enumerate}[label=(\arabic*),align=parleft,leftmargin=*,start=1]
\item 
Let $\alpha$ be a properly embedded arc in $F_{jk}$.
A modification of $P$ in a neighborhood of $\alpha$, as in Figure~\ref{fig:moves}(a), is called a \emph{$0$-$2$ move along $\alpha$}.
By this operation, the number of vertices of $P$ increases by two, and a new disk component appears in $F_{il}$.
Conversely, we can perform the inverse operation of 0-2 move along a disk component, $D$, of $F_{il}$.
We call the operation a \emph{$2$-$0$ move along $D$}.
By this operation, the number of vertices of $P$ decreases by two, and the disk component is removed from $F_{il}$.
\item 
Let $\alpha$ be an edge of the singular graph of $P$.
A modification of $P$ in a neighborhood of $\alpha$, as in Figure~\ref{fig:moves}(b), is called a \emph{$2$-$3$ move along $\alpha$}.
By this operation, the number of vertices of $P$ increases by one, and a new disk component appears in $F_{im}$.
Conversely, we can perform the inverse operation of 2-3 move along a disk component, $D$, of $F_{im}$.
We call the operation a \emph{$3$-$2$ move along $D$}.
By this operation, the number of vertices of $P$ decreases by one, and the disk component is removed from $F_{im}$.
\end{enumerate}
\end{dfn}
We note that the above moves do not change the topological type of each handlebody of a decomposition.


\begin{figure}[htbp]
    \centering
    \begin{minipage}[b]{0.32\linewidth}
        \centering
        \includegraphics[width=.70\textwidth]{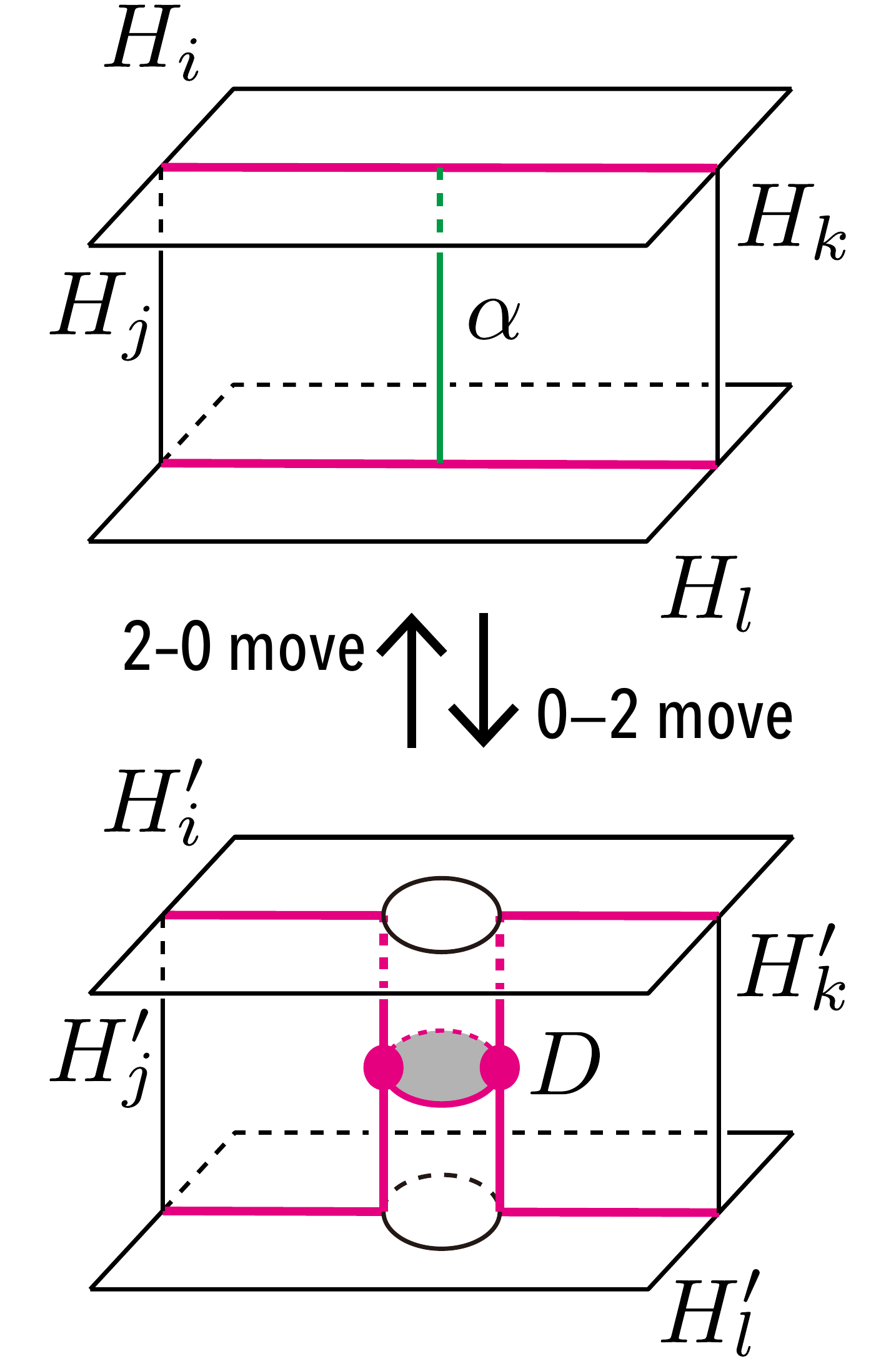}\\
        (a) $0$-$2$ move and $2$-$0$ move
    \end{minipage}
    \begin{minipage}[b]{0.32\linewidth}
        \centering
        \includegraphics[width=.70\textwidth]{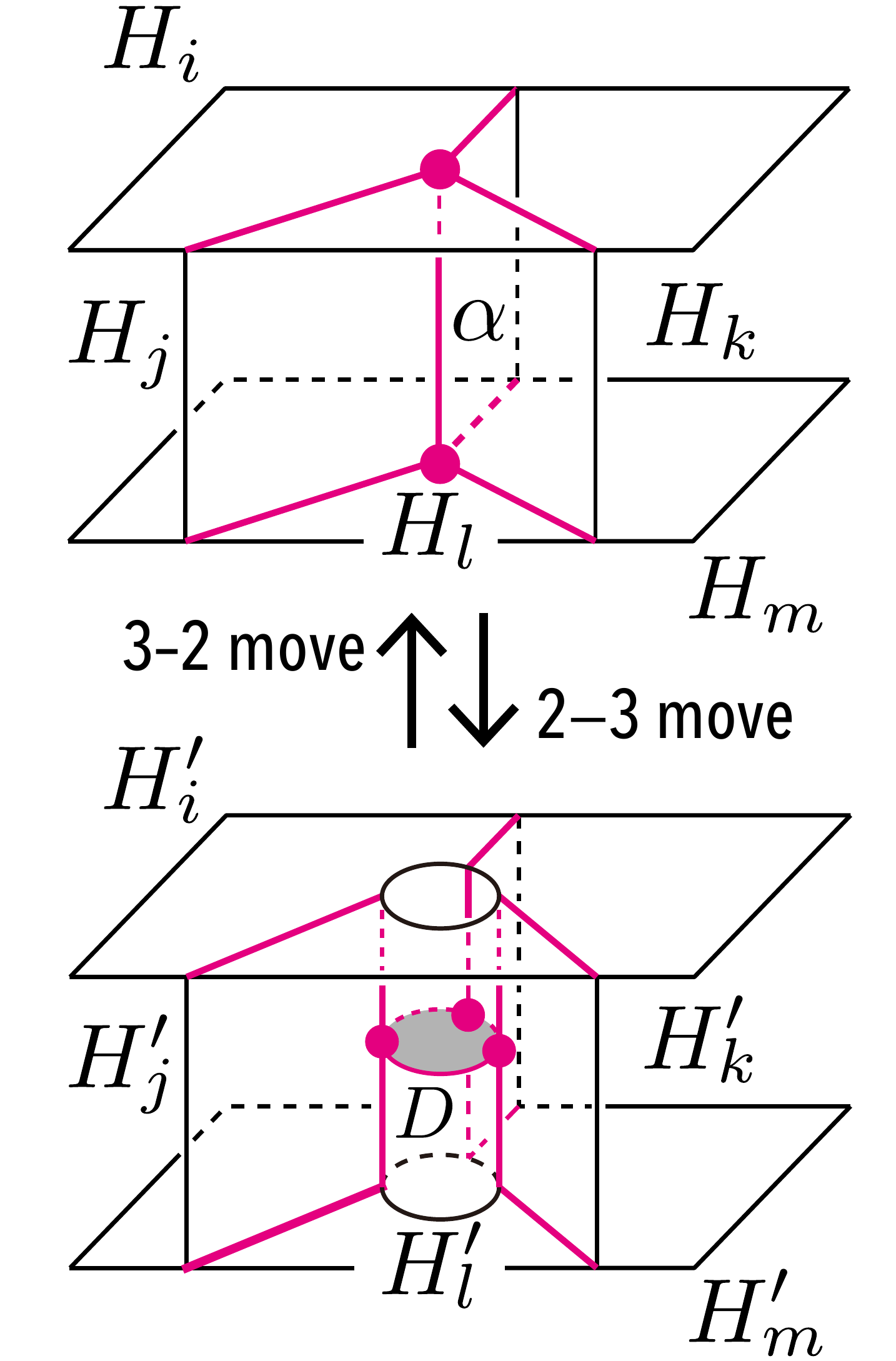}\\
        (b) $2$-$3$ move and $3$-$2$ move
    \end{minipage}
    \caption{Moves on a handlebody decomposition.}
    \label{fig:moves}
\end{figure}

We say that two handlebody decompositions $(H_1, H_2, \ldots, H_n; P)$ and $(H_1\uprime, H_2\uprime, \ldots, H_n\uprime; P\uprime)$ of a closed orientable $3$-manifold $M$ are \emph{equivalent} if there exists an ambient isotopy of $M$ that moves $P$ to $P\upr$ and each $H_i$ to $H_i\upr$ $(i = 1, \ldots, n)$ simultaneously.

\begin{thm}\label{thm:stabeq}
    Let $\mathcal{H}=(H_1, H_2, \ldots, H_n; P)$ and $\mathcal{H}\uprime= (H_1\uprime, H_2\uprime, \ldots, H_n\uprime; P\uprime)$ be simple proper handlebody decompositions of a closed orientable $3$-manifold $M$.
    Then $\mathcal{H}$ and $\mathcal{H}\uprime$ are equivalent after applying $0$-$2$, $2$-$0$, $2$-$3$ moves and types-$0$, and -$1$ stabilizations finitely many times.
\end{thm}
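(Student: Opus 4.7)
The plan is to adapt the strategy of Koenig~\cite{Koenig2018} for trisections to arbitrary $n$, with the Matveev--Piergallini calculus of simple spines as the main tool. To each simple proper handlebody decomposition $\mathcal{H}=(H_1,\ldots,H_n;P)$ I would associate an auxiliary simple spine $\Sigma(\mathcal{H}) = P \cup K_1 \cup \cdots \cup K_n$ of $M$ punctured once in the interior of each $H_i$, where $K_i \subset H_i$ is a spine graph (a deformation retract) of the handlebody $H_i$ connected to $P$ by an arc. After small local adjustments, $\Sigma(\mathcal{H})$ becomes an honest simple spine, whose complement in the punctured manifold is a disjoint union of open $3$-balls.

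First, I would bring $\mathcal{H}$ and $\mathcal{H}\uprime$ to a state in which $\Sigma(\mathcal{H})$ and $\Sigma(\mathcal{H}\uprime)$ have matching topology at infinity, i.e., matching sequences of genera $(g_1,\ldots,g_n)$, by applying type-$0$ and type-$1$ stabilizations to both sides; Remark~\ref{rem:higher_type_by_type1} indicates this genus-matching is always achievable. By the Matveev--Piergallini theorem, the resulting simple spines $\Sigma(\mathcal{H})$ and $\Sigma(\mathcal{H}\uprime)$ are then related by a finite sequence of $2$-$3$ and $0$-$2$ moves and their inverses.

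Next, I would translate each move on the spine back to an allowed operation on the handlebody decomposition. A $0$-$2$ or $2$-$3$ move whose support is disjoint from every $K_i$ corresponds directly to the same move on the partition $P$. A move supported near some $K_i$ can be realized by first performing an appropriate type-$0$ or type-$1$ stabilization that ``absorbs'' the relevant edge of $K_i$ into the partition (so that the local situation on the enlarged partition matches the spine picture), and then carrying out the corresponding move. Composing these realizations along the Matveev--Piergallini sequence produces the desired sequence of moves and stabilizations relating $\mathcal{H}$ and $\mathcal{H}\uprime$.

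The main obstacle will be this final translation step: one must verify that every Matveev--Piergallini move supported near an auxiliary graph $K_i$ can indeed be realized by a finite combination of type-$0$ and type-$1$ stabilizations together with $0$-$2$ and $2$-$3$ moves, and that the simple and proper conditions on the handlebody decomposition are preserved throughout. A secondary difficulty is genus bookkeeping, since a type-$0$ stabilization alters two genera simultaneously, so the preliminary matching of $\Sigma(\mathcal{H})$ and $\Sigma(\mathcal{H}\uprime)$ must be done with care and may require extra interleaved $0$-$2$ moves to create the $F_{ij}$ intersections needed for each desired stabilization.
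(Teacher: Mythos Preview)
Your approach is genuinely different from the paper's. The paper never passes to auxiliary simple spines or invokes the Matveev--Piergallini theorem. Instead it works entirely inside the category of simple proper handlebody decompositions: first it uses $0$-$2$, $2$-$0$, $2$-$3$ moves and type-$1$ stabilizations to arrange $F_{ij}=\emptyset$ for all $3\le i<j\le n$ (so $P$ has no vertices); then it reduces each $F_{2j}$ ($j\ge 3$) to a disk so that $(H_1,\,H_2\cup\cdots\cup H_n)$ becomes a genuine Heegaard splitting, and applies the classical Reidemeister--Singer theorem to match $H_1$ with $H_1\uprime$; finally it matches $H_3,\ldots,H_n$ one at a time by ``covering'' along a fixed reference surface $S_1\subset\partial H_1$ and using handle slides. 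Every intermediate object in the paper's proof is itself a simple proper handlebody decomposition, so no translation step is ever needed.

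Your translation step is a real gap, not a routine verification. The Matveev--Piergallini sequence $\Sigma(\mathcal{H})=\Sigma_0\to\Sigma_1\to\cdots\to\Sigma_m=\Sigma(\mathcal{H}\uprime)$ consists of simple spines of $M$ minus $n$ balls, but an intermediate $\Sigma_k$ has no reason to decompose as $P_k\cup(\text{auxiliary spine pieces})$ with $P_k$ the partition of a handlebody decomposition: the moves freely mix what was originally the $P$-part with what was originally the $K_i$-parts. Your proposed fix---absorb the relevant edge of $K_i$ into the partition via a stabilization before performing the move---presupposes that at each stage you can still identify a ``partition'' subpolyhedron whose complement is a union of handlebodies, and that the leftover pieces remain spines of those handlebodies. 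Neither is automatic, and proving it amounts to showing that every intermediate spine can be brought to the form $\Sigma(\mathcal{H}_k)$ by allowed moves, which is essentially the statement you are trying to prove. The approach may be salvageable, but as written the argument is circular at exactly the point you flag as the main obstacle; the paper's direct reduction to Reidemeister--Singer sidesteps this issue entirely.
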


\begin{proof}
    Set $F_{ij}=H_i\cap H_j$ and $F\upr_{ij}=H\upr_i\cap H\upr_j$ as in Remark~\ref{rem:cl_Hdecomp}.
%
We will prove the theorem in the following steps.
\medskip

\begin{enumerate}[label=Step \arabic*.]\setcounter{enumi}{-1}
	\item  In the case of $n\ge 4$, 
            we perform $0$-$2$, $2$-$0$, $2$-$3$ moves and type-$1$ stabilization appropriately until it holds $F_{ij} = \emptyset$ for any $3 \leq i < j \leq n$.
            Then $P$ becomes a simple polyhedron without vertices. 
	\item For each $j\in\{3,\ldots,n\}$, we deform $F_{2j}$ into a disk by type-$1$ stabilizations.
            Then, $(H_1,(H_2\cup \cdots \cup H_n))$ is a Heegaard splitting. 
            By applying the same process for $\mathcal{H}^\prime$, $(H_1^\prime, (H_2^\prime\cup \cdots \cup H_n^\prime))$ becomes a Heegaard splitting, so by Reidemeister-Singer's theorem, we have $H_1 = H_1^\prime$ after applying type-$0$ stabilizations.
	\item For each $j\in\{3,\ldots,n\}$, we deform $F_{1j}$ into a disk by type-$1$ stabilizations. 
	We denote by $S_1$ the surface $F_{12}$ at this stage, and keep it throughout the steps hereafter.
	\item  We cover $H_1$ along $S_1$ with $H_3$ by type-$1$ stabilizations. 
	Then it holds that $H_3=H_3^\prime$ after handle slides.
	\item[Step $i$.] ($4\leq i\leq n$) We cover $H_{i-1}$ along $S_1$ with $H_i$ by $0$-$2$, $2$-$0$ moves, and type-$1$ stabilizations. 
	Then it holds that $H_i=H_i^\prime$ after handle slides.
\end{enumerate}

If $n = 3$, after performing the operations described in the first half of Step~0, the decompositions $\mathcal{H}$ and $\mathcal{H}\upr$ become trisections.
Then, they are equivalent by using Koenig's theorem.
Hence, in this proof, we assume that $n \geq 4$.

\medskip

\noindent		
\underline{Step 0}\quad 
Put $J=\{(i, j) \mid 3\leq i < j \leq n,\,  F_{ij}\neq\emptyset \}$.
Let $(i,j)$ be the minimum element of $J$ in the lexicographical order. 
First, we change $F_{ij}$ to be connected if it is disconnected as follows.
Take an arc properly embedded in the closure of $\partial H_i\setminus F_{ij}$ that connects different components of $F_{ij}$. 
If the arc is contained in some $F_{ik}$, a type-$1$ stabilization along the arc decreases the number of components by one.
Otherwise, we can perform a type-$1$ stabilization after $0$-$2$ moves along the arc to decrease the number of components. 
Hence, by repeatedly applying this process finitely many times, we may assume that $F_{ij}$ is connected.

Next, take mutually disjoint arcs properly embedded in $F_{ij}$ so that they cut open $F_{ij}$ into a disk. 
We perform either a type-$1$ stabilization or a $0$-$2$ move along each of the arcs according to whether both ends of the arc lie in $\partial H_k$ for $k\neq i,j$ or not.
Then $F_{ij}$ becomes a disk. 
Since $P$ gives the simple proper handlebody decomposition $\mathcal{H}$, 
the boundary $\partial F_{ij}$ has either at least two vertices or no vertex of $P$. 

Suppose $F_{ij}$ is a disk and $\partial F_{ij}$ has more than two vertices.
Let $\beta$ be a sub-arc of $\partial F_{ij}$ cut off by the vertices,
and let $H_k$ ($k = 1, 2$, or $j < k$) be the handlebody with $\beta \subset \partial H_k$ (see Figure~\ref{fig:deleting_vertex}(a)).
We perform either a \tzmove or a \tTmove along $\beta$ according to whether there exists a different handlebody $H_l$ ($l = 1, 2$, or $j < l$) from $H_k$ with $\partial \beta \subset H_k \cap H_l$ or not. 
Each operation reduce the number of vertices in $\partial F_{ij}$ by two or one (see Figure~\ref{fig:deleting_vertex}(b) and~(c)).
By continuing this process, the number of vertices in $\partial F_{ij}$ can be reduced to two.
Then we have $F_{ij}=\emptyset$ after performing a $2$-$0$ move on $F_{ij}$. 

Suppose $F_{ij}$ is a disk and $\partial F_{ij}$ has no vertices. 
There is a handlebody $H_k$ ($k = 1, 2$, or $j < k$) with $ \partial F_{ij}\subset H_k$.
In other words, $F_{ik}$ and $F_{jk}$ share their boundary components in $\partial F_{ij}$.
Since $P$ is connected, at least one of $F_{ik}$ and $F_{jk}$ has another boundary component. 
If $F_{ik}$ shares another boundary component with $F_{il}$ (and $F_{kl}$), we take an arc in $F_{ik}$ which connects $\partial F_{ij}$ and $\partial F_{il}$. 
Then we can remove $F_{ij}$ by a $2$-$0$ move after applying a $0$-$2$ move along the arc.   
A disk component of $F_{jl}$ arises in this operation.
It follows that $l > j$ or $l = 1, 2$, as $(i,l)$ is greater than $(i,j)$ in the lexicographical order by the minimality of $(i,j)$.
If $F_{jk}$ shares another boundary component with $F_{jl}$ (and $F_{kl}$), similarly, we can remove 
$F_{ij}$ by a $2$-$0$ move after applying a $0$-$2$ move. 
In this case, there is a possibility that $F_{il}$ changes to a disk from the empty set by this operation with $3\leq l < j$.
This implies that the minimal element of $J$ varies from $(i,j)$ to $(i,l)$.
In such a case, we take an oriented arc in $P$ from a point in $\partial F_{ij}$ to a point in $\partial H_1$ or $\partial H_2$.
Then we can remove $F_{ij}$, and the minimal element of $J$ increases after successively applying $0$-$2$ and $2$-$0$ moves along the arc from the start to the end.

By repeating the same process, we have $J=\emptyset$. 
Namely, $F_{ij}=\emptyset$ for $3\leq i < j\leq n$. 
Since each vertex of $P$ is contained in four different handlebodies, this condition implies that $P$ has no vertex.     
\begin{figure}[htbp]
    \centering
    \begin{minipage}[b]{0.32\textwidth}
        \centering
        \includegraphics[width=0.80\textwidth]{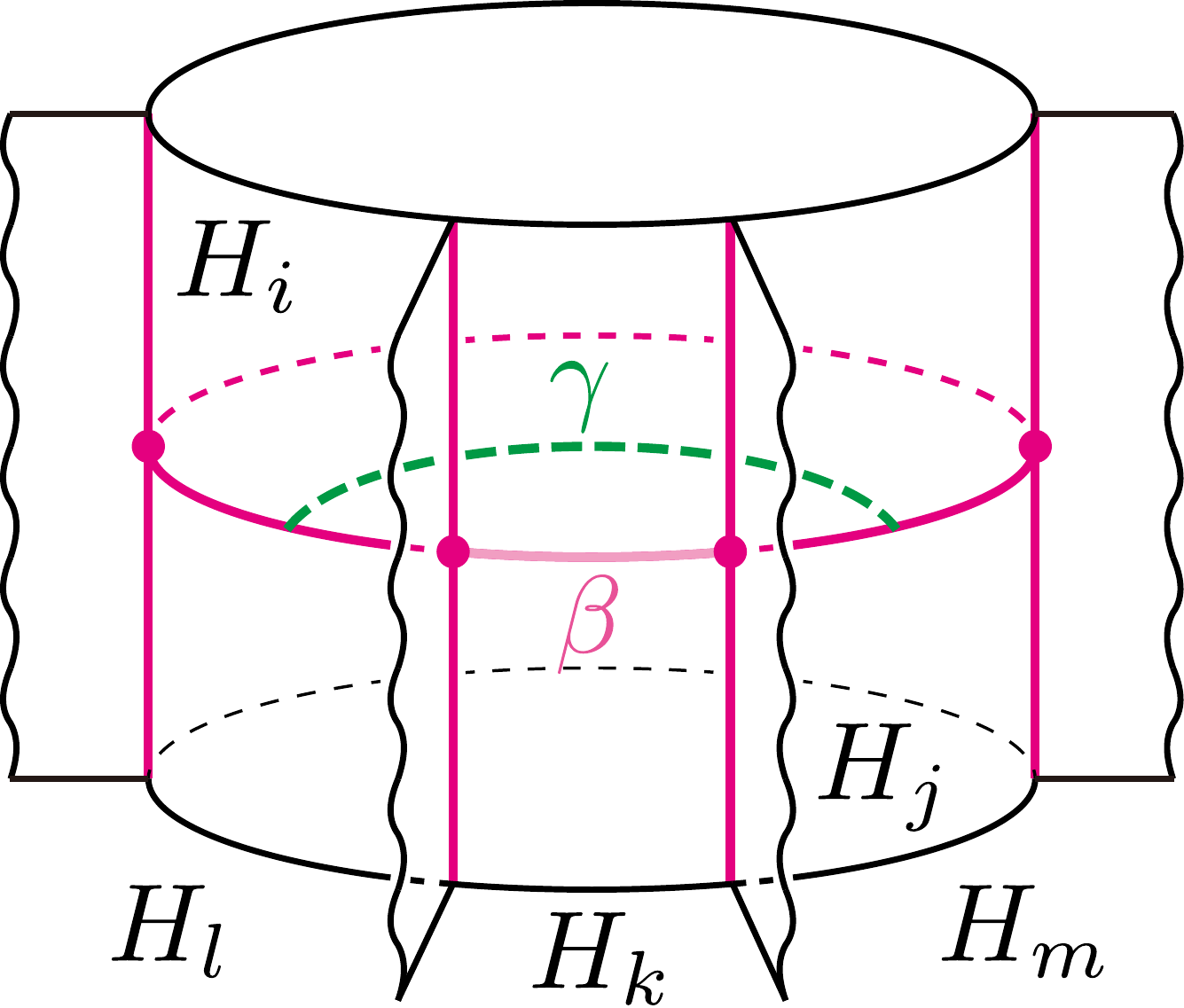}\\
        (a)
    \end{minipage}
    \begin{minipage}[b]{0.32\textwidth}
        \centering
        \includegraphics[width=0.80\textwidth]{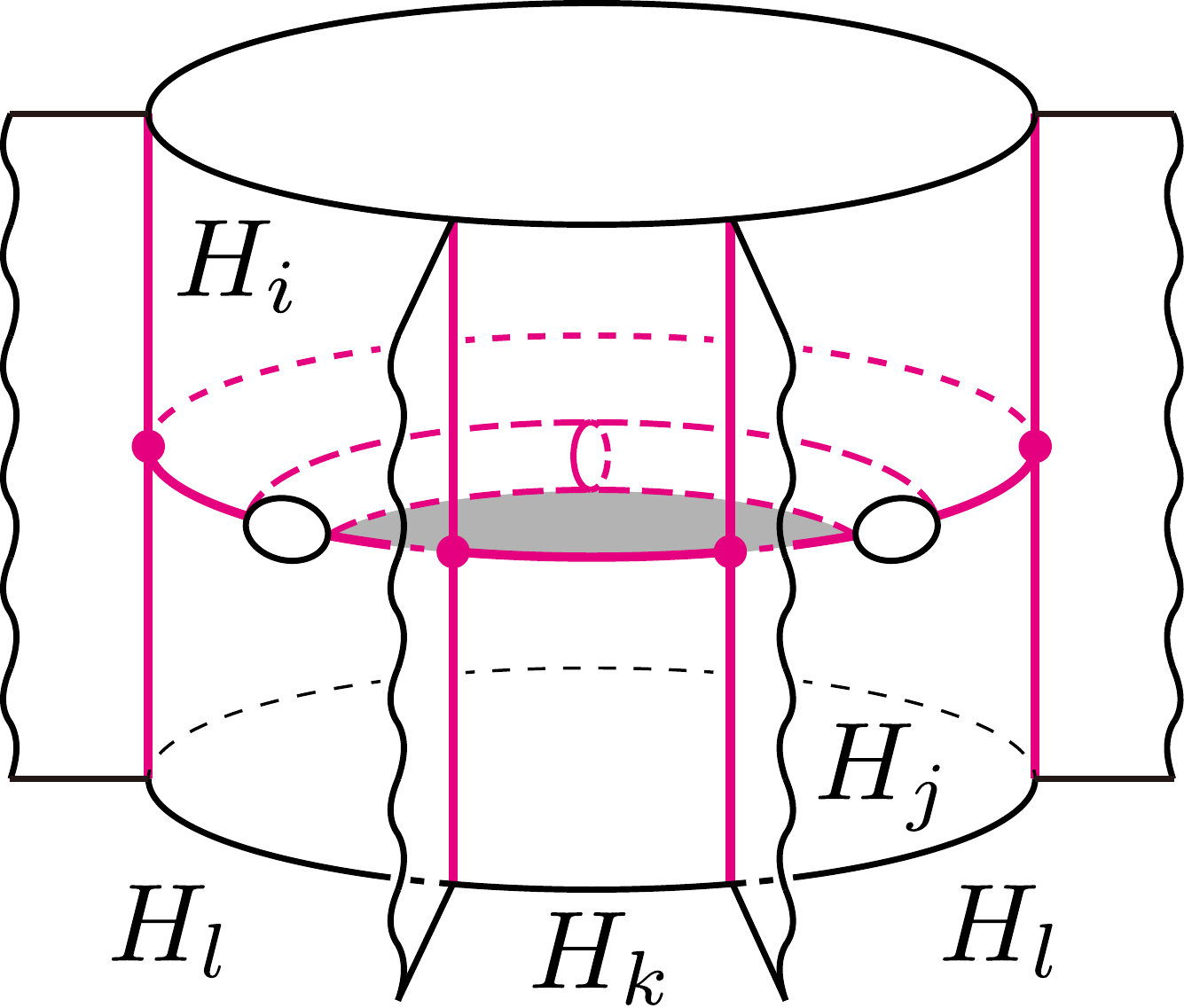}\\
        (b)
    \end{minipage}
    \begin{minipage}[b]{0.32\textwidth}
        \centering
        \includegraphics[width=0.80\textwidth]{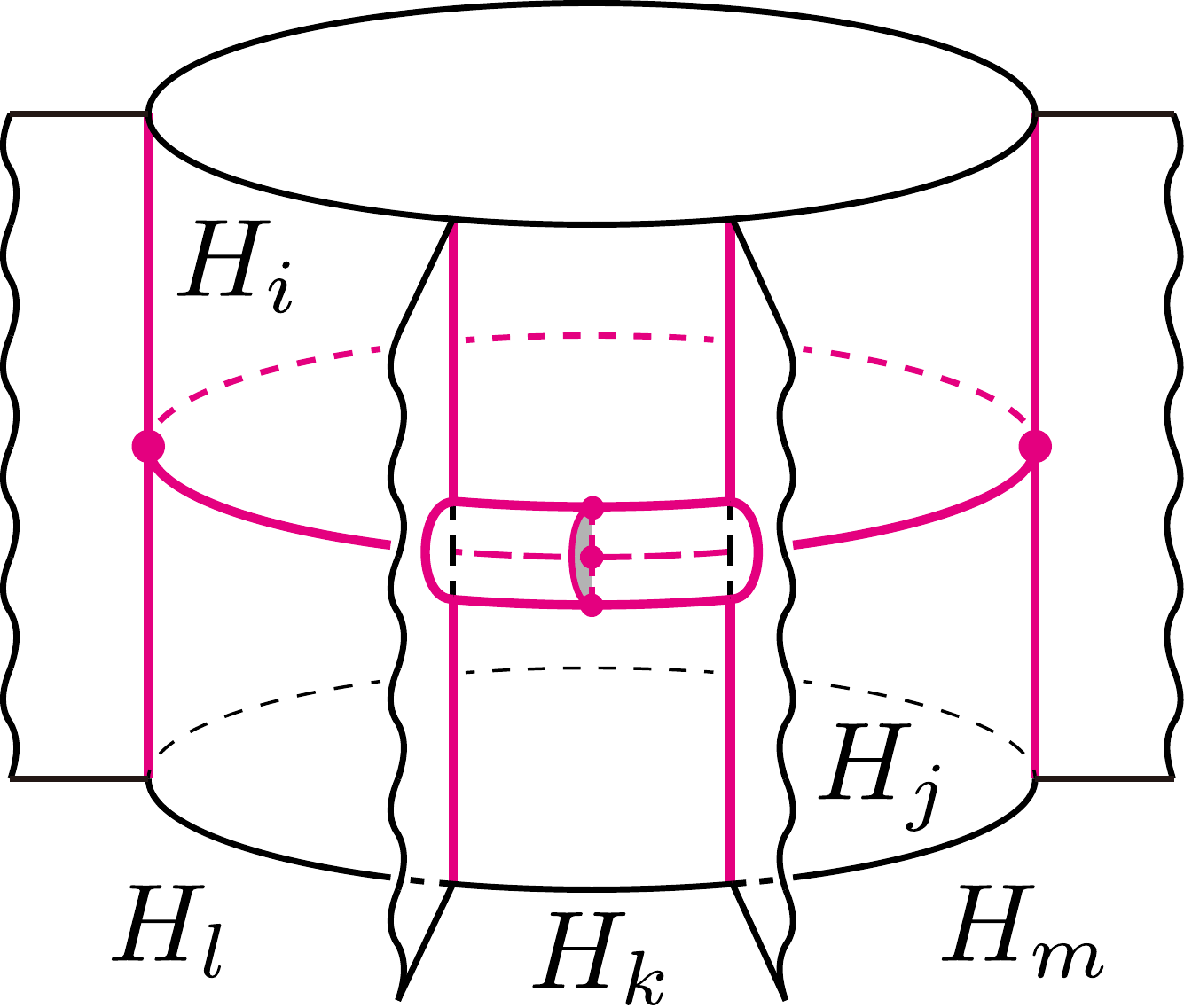}\\
        (c)
    \end{minipage}
    \caption{(a) The disk sector $F_{ij}$. A properly embedded arc $\gamma \subset F_{ij}$ is parallel to $\beta \subset \partial F_{ij}$ that is an arc contained in a handlebody $H_k$. (b) Performing a type-$1$ stabilization along $\gamma$. We can perform a \tzmove along the grayed region. (c) Performing \tTmove along $\beta$.}
    \label{fig:deleting_vertex}
\end{figure}
 
\noindent
\underline{Step 1}\quad
For each $j\geq 3$, we will deform $F_{2j}$ into a disk by applying similar operations in Step~0. 
Since $\partial H_j=F_{1j}\cup F_{2j}$, we may assume that $F_{2j}$ is connected, if necessary, by performing type-$1$ stabilizations along arcs in $F_{1j}$. 
Take a maximal set of non-separating arcs properly embedded in $F_{2j}$.
By performing type-$1$ stabilizations along the arcs, $F_{2j}$ becomes a disk. 
Then $H_2\cup \cdots \cup H_n$ is a handlebody.  
By applying the same process for $\mathcal{H}^\prime$, $H'_2\cup \cdots \cup H'_n$ becomes a handlebody.
Hence $(H_1, (H_2\cup  \cdots \cup H_n))$ and  $(H'_1, (H'_2\cup \cdots \cup H'_n))$ are Heegaard splittings of $M$.
By Reidemeister-Singer's theorem, these two Heegaard splittings become equivalent after performing a finite sequence of type-$0$ stabilizations.
In particular, we can assume $H_1=H_1\upr$.
 	
\medskip
\noindent
\underline{Step 2}\quad
Similarly to Step 1, for each $j\geq 3$, we can deform $F_{1j}$ into a disk by performing type-$1$ stabilizations along suitable arcs properly embedded in $F_{1j}$.
 \begin{claim}\label{claim:CancelingPair}
     For $i \in \{ 3, \ldots, n \}$, let $D_{i1},\ldots,D_{ig_i}$ be a complete meridian disk system of $H_i$ such that $\partial D_{ij}\subset F_{2i}$ for $j\in\{1,\ldots,g_i\}$.  
     Then there exist disjoint meridian disks $E_{ij}$ \textup{(}$i\in\{3,\ldots,n\}$, $j\in\{1,\ldots,g_i\}$\textup{)} of $H_2$ such that $\partial E_{ij}\subset F_{12}\cup F_{2i}
     $, $E_{ij}\cap D=E_{ij}\cap D_{ij}$, and $\partial E_{ij}$ intersects $\partial D_{ij}$ transversely in a single point, where $D$ denotes the union $\cup_{i,j} D_{ij}$ of all $D_{ij}$.
 \end{claim} 
 \begin{proof}[Proof of Claim~\textup{\ref{claim:CancelingPair}}]
 According to the deformation of $H_2$ at this step, there exist mutually disjoint separating disks $E_3,\ldots,E_n$ in $H_2$ such that each $E_i$ cuts off a handlebody $W_i$ from $H_2$ so that $(W_i,F_{2i})$ is homeomorphic to $(F_{2i}\times [0,1],F_{2i}\times \{0\})$. 
(The union $H_i\cup W_i$ can be regarded as the handlebody $H_i$ at the end of Step 1.)
We can take mutually disjoint arcs $\alpha_{i1},\ldots,\alpha_{ig_i}$ properly embedded in $F_{2i}$ so that $\alpha_{ij}\cap D=\alpha_{ij}\cap D_{ij}$, and $\alpha_{ij}$ intersects $D_{ij}$ transversely in a single point.
 See Figure~\ref{fig:step2}.
 Let $E_{ij}$ be a disk corresponding to $\alpha_{ij}\times [0,1]$ such that $E_{ij}\cap E_i=\emptyset$ for each $j\in\{1,\ldots,g_i\}$. 
 Then the assertion holds since $(\partial W_i\setminus E_i)\subset F_{12}\cup F_{2i}$.
 \end{proof}
 \begin{figure}[htbp]
     \centering
     \includegraphics[width=0.78\textwidth]{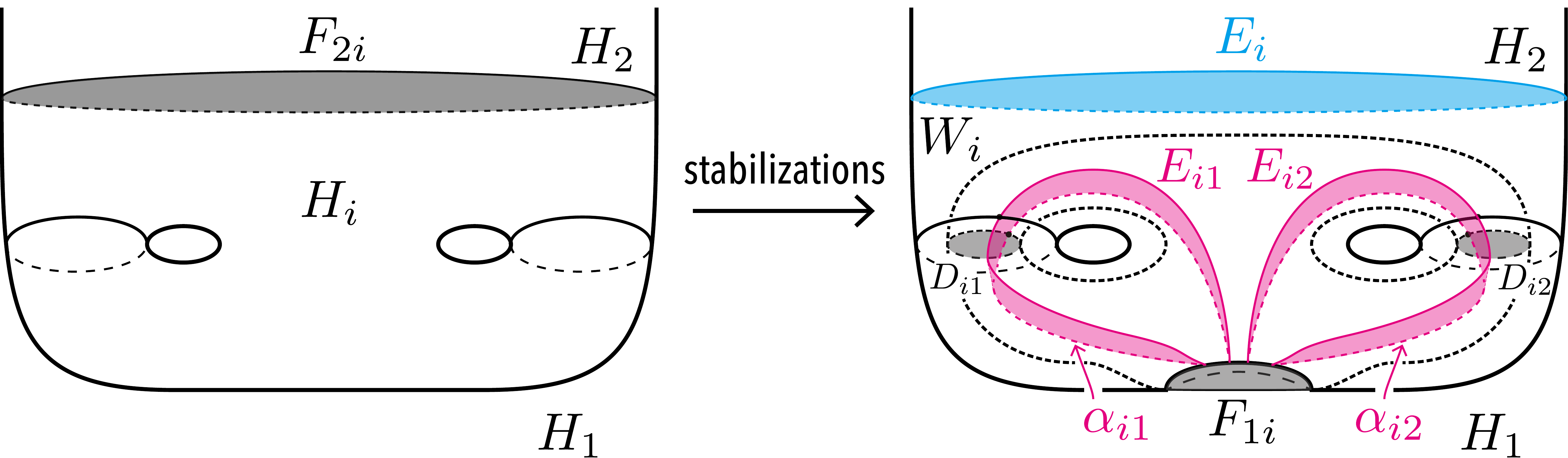}
     \caption{A deformation at Step 2}
     \label{fig:step2}
 \end{figure}

 Let $S_1$ denote the surfaces $F_{12}$ at this stage. 
 Claim~\ref{claim:CancelingPair} implies that any $1$-handle of each handlebody $H_i$ ($i\geq 3$) can be a local $1$-handle after a handle slide on $S_1$.  

\medskip
\noindent 	
\underline{Step 3}\quad 
For handle slide of $H_3$, we will cover $H_1$ along $S_1$ with $H_3$ by type-$1$ stabilizations. 
 Take a maximal set of mutually non-parallel, non-boundary parallel arcs properly embedded in $S_1$ whose endpoints lie in $\partial H_3$.
We perform type-$1$ stabilizations along those arcs. 
The surface $F_{12}=S_1\setminus F_{13}$ becomes the union of $n-3$ annuli $A_{14}, \ldots, A_{1n}$ such that $A_{1j}\cap H_j=\partial F_{1j}=\partial F_{2j}$ for each $j\in\{4,\ldots,n\}$. 
Since all spines of $S_1$ are covered by $H_3$, $H_3$ becomes a local unknotted handlebody after performing handle slides by Claim~\ref{claim:CancelingPair} (see Figure~\ref{fig:handle_slide}). 
Applying the same process for $\mathcal{H}^\prime$ and arranging genera of $H_3$ and $H\upr_3$ by performing type-$0$ stabilizations if necessary, we can assume that $H_3=H\upr_3$. 

According to the deformation of $H_3$ at this step, there exists a separating disk $D_3$ in $H_3$ that cuts off a handlebody $V_3$ from $H_3$ so that $(V_3,F_{13})$ is homeomorphic to $(F_{13}\times [0,1],F_{13}\times \{0\})$. 
($H_3\setminus V_3$ can be regarded as the previous $H_3$ at the end of Step 2.)
Let $S_3$ be the surface $\partial V_3\setminus (D_3\cup F_{13})$, which is a subsurface of $F_{23}$ and homeomorphic to $S_1$.

\begin{figure}[htbp]
    \centering
    \begin{minipage}{0.48\textwidth}
        \centering
        \includegraphics[width=0.90\linewidth]{./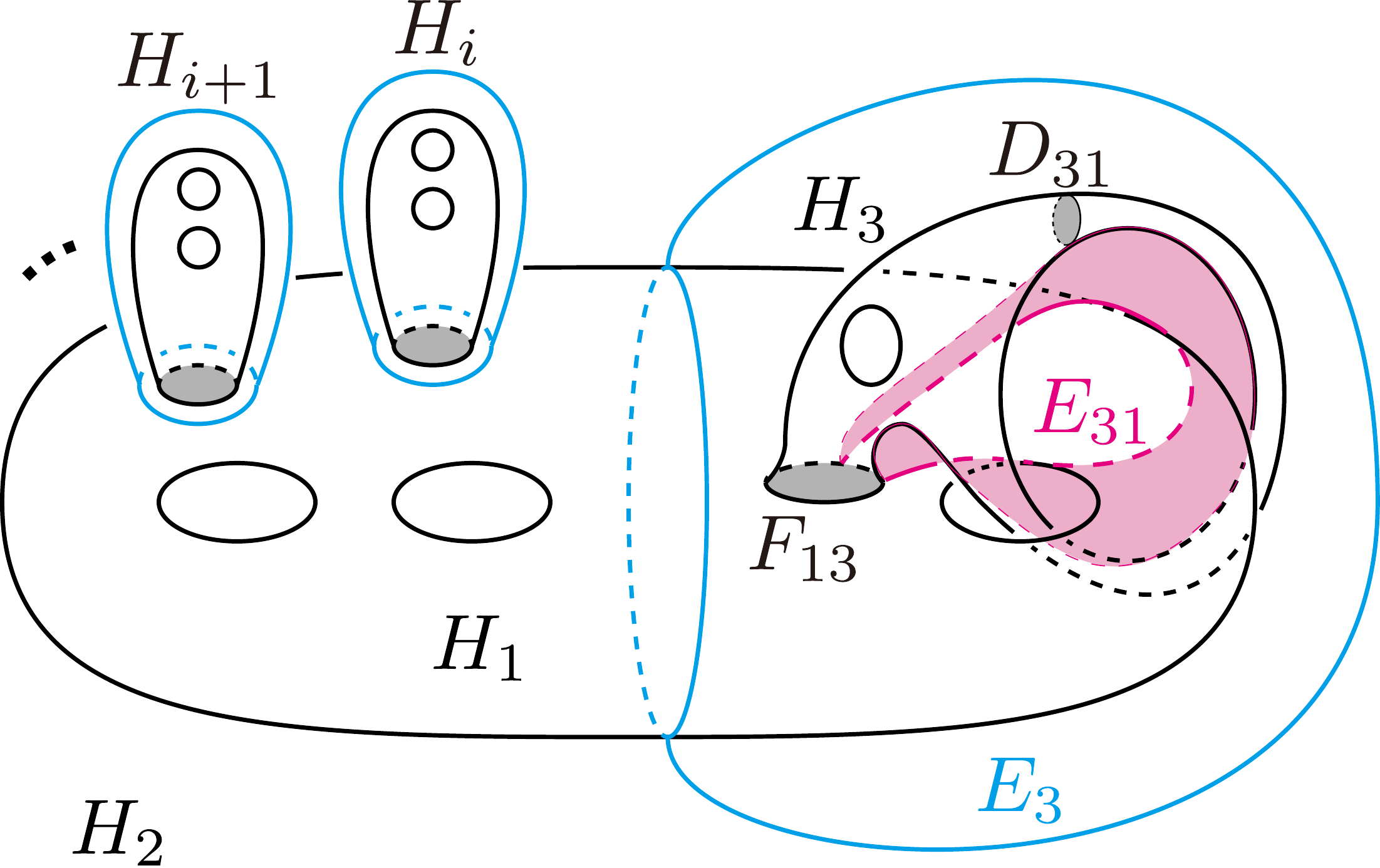}\\
        (a)
    \end{minipage}
    \begin{minipage}{0.48\textwidth}
        \centering
        \includegraphics[width=0.88\linewidth]{./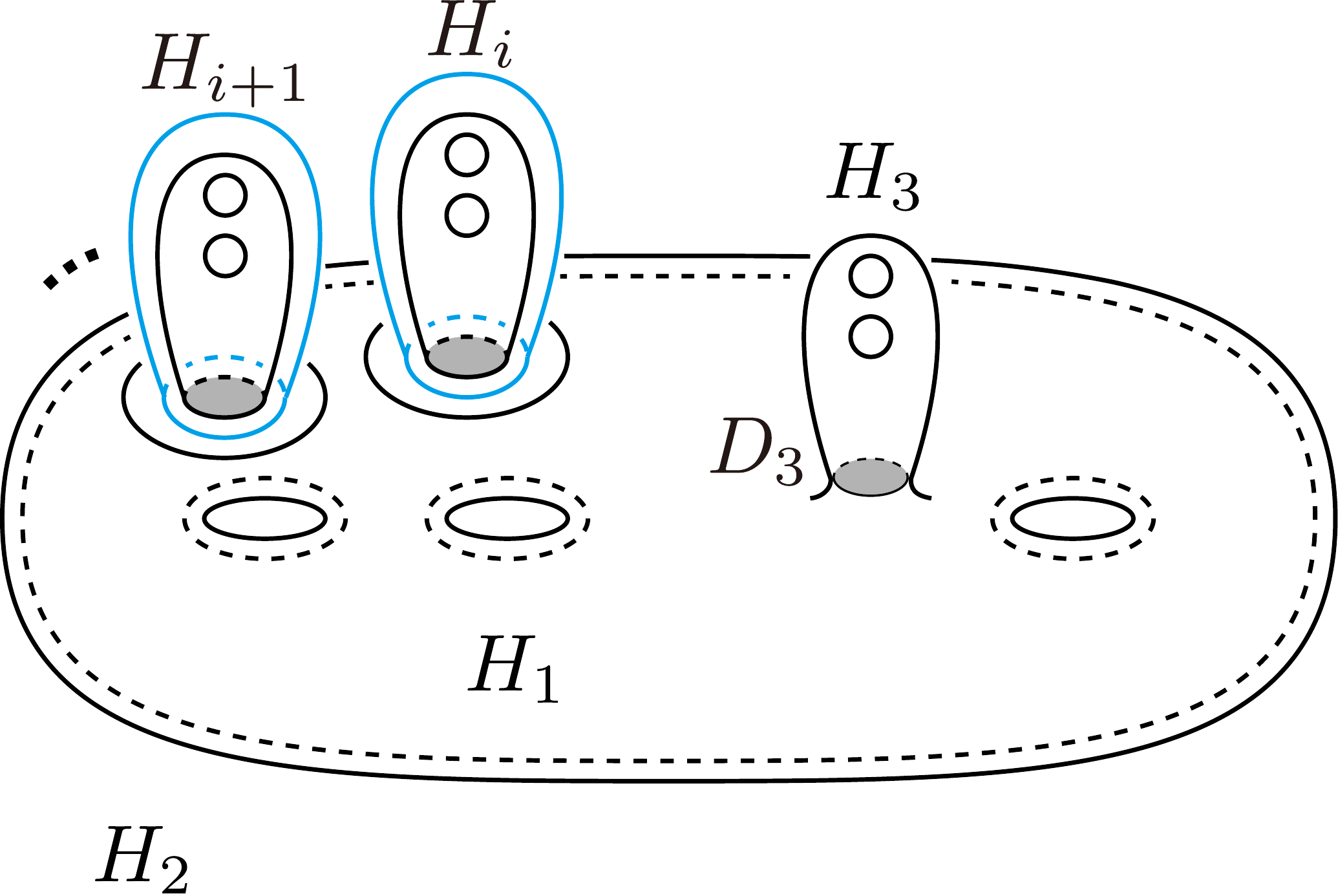}\\
        (b)
    \end{minipage}
    \caption{(a) Before performing the operation in Step~3. (b) After performing the operation. The handlebody $H_3$ is a local unknotted handlebody.}
    \label{fig:handle_slide}
\end{figure}

\medskip
\noindent 	
\underline{Step $i$} ($4\leq i\leq n$)\quad 
At the beginning of Step $i$, we may have $S_3, \ldots, S_{i-1}$ as subsurfaces of $\partial H_3, \ldots, \partial H_{n-1}$, respectively, that are homeomorphic to $S_1\subset \partial H_1$, and $(i-3)$ annuli $A_{1i},A_{3i},\ldots,A_{(i-2)i}\subset\partial H_2$ between $\partial F_{2i}$ and a component of $\partial S_{i-1}$, where $A_{ji}\subset S_j\setminus F_{j(j+1)}$ for each $j\in\{3,\ldots,i-2\}$, $A_{3i}\cap S_{1}=A_{3i}\cap A_{1i}=\partial A_{3i}\cap \partial A_{1i}$, and $A_{ji}\cap S_{j-1}=A_{ji}\cap A_{(j-1)i}=\partial A_{ji}\cap \partial A_{(j-1)i}$ for each $j\in\{4,\ldots,i-2\}$. 


Similar to Step~3, we will cover $H_{i-1}$ along $S_1$ with $H_i$ by performing handle slides of $H_i$. 
By a 0-2 move and a 2-0 move on $A_{1i}$, $F_{1i}$ extends to $A_{1i}$, and an annulus $F_{3i}$ arises. 
Continuing the same operation on $A_{3i},\ldots,A_{(i-2)i}$, $F_{ji}$ ($j\in\{1,3,\ldots i-2\}$) becomes the annulus $A_{ji}$. 
By the same operation as Step 3 on $S_{i-1}$, $S_{i-1}\setminus F_{(i-1)i}$ becomes the union of $n-4$ annuli 
including $A_{(i-1)(i+1)},\ldots, A_{(i-1)n}$. (In the case of $n=4$, $F_{34}$ includes $S_3$ at Step $4$.) 
By the same argument at Step 3, then, $H_i$ can be a local unknotted handlebody after handle slide, and we can assume that $H_i=H\upr_i$.

When we finish Step $n$, we have $H_1 = H_1\upr,\, H_3 = H_3\upr,\, \ldots,\, H_n = H_n\upr$.
Since this automatically implies that $H_2 = H_2\upr$, the proof is completed.

 \end{proof}


\section{Handlebody decomposition consisting of three handlebodies}
\label{sec:chara_decomposition}

In this section, we provide several results of handlebody decompositions consisting of three handlebodies.
We keep assuming that all handlebody decompositions are simple and proper unless otherwise specified.

\ifRSPA
Subsection~\ref{sec:chara_decomposition}\ref{subsec:unstabilization} will consider stabilizability on handlebody decompositions containing a $3$-ball.
\else
Subsection~\ref{subsec:unstabilization} will consider stabilizability on handlebody decompositions containing a $3$-ball.
\fi
In~\cite{Waldhausen}, Waldhausen showed that any genus-$g$ Heegaard splitting of $S^3$ is stabilized for $g \geq 1$.
On the other hand, Koenig found an infinite family of unstabilized type-$(1, 2, 2)$ handlebody decompositions of $S^3$ (see~\cite[Section~6]{Koenig2018}).
We will show that a closed connected orientable $3$-manifold not containing a non-separating sphere admits an unstabilized type-$(0, 0, g)$ handlebody decomposition, where $g$ is the Heegaard genus of the manifold (Proposition~\ref{prop:00l}).
Furthermore, we will see that almost all lens spaces admit a type-$(0, 1, 2)$ handlebody decomposition (Proposition~\ref{prop:012}).
\ifRSPA
In subsection~\ref{sec:chara_decomposition}\ref{subsec:3torus}, we will study handlebody decompositions of the $3$-dimensional torus $\Torus$.
\else
In subsection~\ref{subsec:3torus}, we will study handlebody decompositions of the $3$-dimensional torus $\Torus$.
\fi
These decompositions play an important role in \emph{polycontinuous patterns} (Section~\ref{sec:polyconti}).


\subsection{Handlebody decompositions containing a $3$-ball}
\label{subsec:unstabilization}

We first introduce the result of G\'omez-Larra\~naga~\cite{Gomez87}.
They gave a complete classification of all closed connected $3$-manifolds that admit handlebody decompositions with small genera.

\begin{thm}[{\cite[Propositions~1--3, Theorem~1]{Gomez87}}]\label{thm:small_genera}
    Let $(H_1, H_2, H_3)$ be a type-$(g_1, g_2, g_3)$ handlebody decomposition of a closed connected orientable $3$-manifold $M$ with $g_1 \leq g_2 \leq g_3$.
    We denote by $\mathcal{B}$ the connected sum of some copies of $S^2 \times S^1$, and denote by $\mathcal{L}$ or $\mathcal{L}_i$ a lens space with non-trivial finite fundamental group.
    Then the following hold:
    \begin{enumerate}[label={\textup{(\arabic*)}}]
        \item If all $g_i$ are equal to $0$, then $M$ is homeomorphic to $S^3$ or $\mathcal{B}$.
            Conversely, $S^3$ and $\mathcal{B}$ admit such a handlebody decomposition.
        \item If $g_1 = g_2 = 0$ and $g_3 = 1$, then $M$ is homeomorphic to $S^3$, $\mathcal{B}$, $\mathcal{L}$, or $\mathcal{B} \csum \mathcal{L}$.
            Conversely, these manifolds admit such a handlebody decomposition.
        \item If $g_1 = 0$ and $g_2 = g_3 = 1$, then $M$ is homeomorphic to $S^3$, $\mathcal{B}$, $\mathcal{L}$, $\mathcal{B} \csum \mathcal{L}$, $\mathcal{L}_1 \csum \mathcal{L}_2$, or $\mathcal{L}_1 \csum \mathcal{L}_2 \csum \mathcal{B}$.
            Conversely, these manifolds admit such a handlebody decomposition.
        \item If all $g_i$ are equal to $1$, then $M$ is homeomorphic to $S^3$, $\mathcal{B}$, $\mathcal{L}$, $\mathcal{B} \csum \mathcal{L}$, $\mathcal{L}_1 \csum \mathcal{L}_2$, $\mathcal{L}_1 \csum \mathcal{L}_2 \csum \mathcal{B}$, $\mathcal{L}_1 \csum \mathcal{L}_2 \csum \mathcal{L}_3$, $\mathcal{L}_1 \csum \mathcal{L}_2 \csum \mathcal{L}_3 \csum \mathcal{B}$, $\mathcal{S}(3)$, or $\mathcal{S}(3) \csum \mathcal{B}$,
            where $\mathcal{S}(3)$ denotes a Seifert fiber space with at most three exceptional fibers.
            Conversely, these manifolds admit such a handlebody decomposition.
    \end{enumerate}
\end{thm}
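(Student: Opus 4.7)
The plan is to split the theorem into two directions: the necessity direction, showing that $M$ must belong to the listed family, and the sufficiency direction, constructing an explicit type-$(g_1,g_2,g_3)$ decomposition for each listed manifold. I would handle the four cases in the stated order, since the simpler cases serve as building blocks for the connected-sum constructions in the later cases.

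For necessity, the main tool is Van Kampen's theorem applied to open neighborhoods of the three handlebodies (in the sense of Remark~\ref{rem:cl_Hdecomp}). Since each $\pi_1(H_i)$ is free of rank $g_i\le 1$, $\pi_1(M)$ is obtained by amalgamating at most three (trivial or cyclic) factors along the surfaces $F_{ij}$ and the singular graph of $P$. In case (1), every $H_i$ is a $3$-ball and each $F_{ij}$ is planar, so $\pi_1(M)$ is generated solely by loops in the singular graph; combining Grushko's theorem, the prime decomposition theorem, and Perelman's solution of the Poincar{\'e} conjecture then forces $M\cong S^3$ or $M\cong\mathcal{B}$. In cases (2)--(4), each genus-$1$ handlebody contributes at most one $\mathbb{Z}$ factor, and the additional possibilities come from how the meridians of the new solid tori are glued: a compressing disk of $F_{ij}$ that caps off a core circle introduces a lens-space summand. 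A case analysis of whether the $F_{ij}$ compress in adjacent handlebodies yields the lens-space summands in cases (2)--(4). In case (4), the non-connected-sum possibility $\mathcal{S}(3)$ arises precisely when the $F_{ij}$ cannot all be compressed and together define a Seifert fibered structure with at most three exceptional fibers.

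For sufficiency, I would construct explicit decompositions. The base example is a trisection of $S^3$ into three balls with $P$ a cone on a theta graph; stabilizing this by type-$0$ or type-$2$ moves produces the decompositions of $\mathcal{B}$ at any allowed genus. A lens space $\mathcal{L}$ admits a type-$(0,0,1)$ decomposition obtained from its genus-$1$ Heegaard splitting by slicing one of the two solid tori along two parallel meridian disks, producing two balls while leaving the other solid torus intact. For $\mathcal{S}(3)$, taking disjoint solid-torus neighborhoods of the three exceptional fibers and appropriately thickening the complement gives a type-$(1,1,1)$ decomposition. All connected sums in the lists are realized by performing the connected sum operation along a $3$-ball lying in the interior of one sector of $P$, which leaves the genus triple unchanged.

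The hard part will be the necessity direction in case (4), where one must exclude every closed orientable $3$-manifold beyond the listed connected sums and $\mathcal{S}(3)$. This requires a finer analysis than the earlier cases: some $F_{ij}$ may be essential annuli in the $H_i$ rather than compressing disks, and one must invoke Seifert fibered space recognition (or the JSJ decomposition) to pin down the structure of the non-prime factor. Establishing exhaustiveness of the list, rather than merely nonemptiness, is the technical core of G{\'o}mez-Larra{\~n}aga's argument and is where the topological content of the classification concentrates.
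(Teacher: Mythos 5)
The paper offers no proof of this statement: it is imported verbatim from G\'omez-Larra\~naga \cite{Gomez87}, so your sketch can only be judged on its own terms rather than against an in-paper argument. Judged so, the necessity direction has a real gap at the Van Kampen step. From the facts that each $\pi_1(H_i)$ is trivial or $\mathbb{Z}$ and that $\pi_1(M)$ is generated by these together with loops in the singular graph, you cannot conclude that $\pi_1(M)$ is a free product of cyclic groups: the amalgamation is over the groups $\pi_1(F_{ij})$, which are free of arbitrarily large rank, and an amalgamated product over nontrivial edge groups is not a free product of cyclics in general. The missing ingredient is exactly the structural lemma this paper invokes elsewhere (\cite[LEMMA~1]{Gomez87}, used in the proof of Proposition~\ref{prop:honeycomb_decomp}): each component of $F_{ij}$ is planar or an annulus, and its image in the fundamental group of each adjacent handlebody is trivial or generated by a power of the core. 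In case (1) the required vanishing does hold (planar subsurfaces of $\partial H_i\cong S^2$ bound in the ball $H_i$), but your stated inference skips it; in cases (2)--(4) controlling the annular components is the actual content, and even after the group is identified one still needs Kneser--Milnor plus geometrization to pass to the homeomorphism type. Note also that in case (4) the group-theoretic route is not merely harder but unavailable: $\mathcal{S}(3)$ includes the Poincar\'e sphere, whose fundamental group is not a free product of cyclics, so the essential-annulus/Seifert analysis you defer to the end is the entire argument there, not a refinement.

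The sufficiency direction contains a concrete error. Forming the connected sum along balls lying in the interiors of sectors boundary-connect-sums only the two handlebodies adjacent to each chosen sector and leaves the remaining pieces untouched, so two type-$(0,0,1)$ decompositions of lens spaces combine into a decomposition of $\mathcal{L}_1 \csum \mathcal{L}_2$ into \emph{four} handlebodies, of type $(0,0,1,1)$, not a three-piece decomposition. The sum must instead be taken in a ball centred on an edge of the singular graph, where the three handlebodies meet locally along a triple line and the boundary sphere meets the partition in a theta-graph; matching the three local pieces of one decomposition to those of the other under a chosen bijection produces three handlebodies whose genera are the sums of the matched genera. In particular the operation does not leave the genus triple unchanged --- genera add --- which is precisely why $\mathcal{L}_1 \csum \mathcal{L}_2$ first appears in item (3) with type $(0,1,1)$ rather than in item (2).
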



Let $M$ be a closed orientable $3$-manifold with a Heegaard splitting $(W_1, W_2)$ of genus $l$.
Then, we can take $l+1$ non-separating disks in $W_1$ so that they separate $W_1$ into two $3$-balls.
Hence, $M$ admits a type-$(0, 0, l)$ handlebody decomposition (see~\cite[Example~1.2]{Gomez94}).
The following proposition classifies such a decomposition.

\begin{prop}\label{prop:00l}
    Let $M$ be a closed, connected, orientable $3$-manifold of Heegaard genus $g$.
    Suppose that $M$ does not contain a non-separating sphere.
    Then $M$ admits a type-$(0, 0, l)$ handlebody decomposition if and only if we have $g \leq l$.
    In particular, a type-$(0, 0, g)$ handlebody decomposition of $M$ is unstabilized.
\end{prop}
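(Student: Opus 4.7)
The plan is to prove the two directions separately, with the main work in the reverse direction, and then to derive the ``In particular'' statement from a case analysis of the three kinds of stabilization.

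For the ``if'' direction I will start from a minimal Heegaard splitting $(W_1, W_2)$ of $M$ and apply $l - g$ stabilizations to reach genus $l$; I then cut $W_1$ along $l + 1$ mutually disjoint properly embedded disks whose boundaries form a non-separating system on $\partial W_1$. This separates $W_1$ into two $3$-balls $H_1, H_2$, and the triple $(H_1, H_2, W_2)$, together with the natural partition, is a type-$(0, 0, l)$ handlebody decomposition of $M$.

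For the reverse direction, given a simple proper type-$(0, 0, l)$ decomposition $(H_1, H_2, H_3; P)$, I set $V := \overline{M \setminus H_3}$ and aim to prove that $V$ is a handlebody of genus $l$; this produces a Heegaard splitting $(V, \overline{H_3})$ of genus $l$ and hence $g \leq l$. By Remark~\ref{rem:cl_Hdecomp} and properness, $V = \overline{H_1} \cup_{F_{12}} \overline{H_2}$ is the union of two $3$-balls glued along the sector surface $F_{12}$. Collapsing each $\overline{H_i}$ to a point exhibits $V$ as homotopy equivalent to the unreduced suspension $\Sigma F_{12}$. Writing $c$ for the number of components of $F_{12}$ and $b_j$ for the number of boundary circles of its $j$-th (planar) component, a standard computation then gives $\pi_1(V)$ free of rank $c - 1$ and $H_2(V) \cong \mathbb{Z}^{\sum_j (b_j - 1)}$; moreover, each generator of $H_2(V)$ is representable by a $2$-sphere built from a non-trivial $1$-cycle of $F_{12}$ by capping off with disks in $\overline{H_1}$ and $\overline{H_2}$.

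The crucial step will be to show that every component of $F_{12}$ is a disk. Applying Mayer--Vietoris to $M = V \cup \overline{H_3}$ along $\partial V = \partial \overline{H_3}$, and using that $[\partial V] = 0 \in H_2(V)$ (since $V$ is orientable with $\partial V$ as boundary) and $H_2(\overline{H_3}) = 0$, I obtain an injection $H_2(V) \hookrightarrow H_2(M)$. An embedded $2$-sphere in a closed orientable $3$-manifold is separating if and only if it is null-homologous, so any sphere representing a non-trivial class of $H_2(V)$ would be non-separating in $M$, contradicting the standing hypothesis. Hence $H_2(V) = 0$, forcing every $b_j = 1$, and $V$ is obtained by gluing two $3$-balls along $c$ disjoint disks, which is the handlebody of genus $c - 1$; matching with $\partial \overline{H_3}$ of genus $l$ yields $c - 1 = l$. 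This Mayer--Vietoris plus no-non-separating-sphere step is the main obstacle I foresee; the rest of the reverse direction is bookkeeping. Finally, for the ``In particular'' assertion, I will argue by contradiction: by Definition~\ref{dfn:stabilization}, a type-$0$ destabilization lowers two of the genera by $1$, impossible since $g_1 = g_2 = 0$; a type-$1$ or type-$2$ destabilization lowers exactly one genus, and the only feasible choice is to decrease $g_3$ to $g - 1$, producing a type-$(0, 0, g - 1)$ decomposition whose existence forces $g \leq g - 1$ by the main statement just proved, a contradiction.
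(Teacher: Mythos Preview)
Your argument is correct. Both directions and the final unstabilized claim go through as you outline; in particular, the Mayer--Vietoris step works exactly as you describe, since $H_2(\overline{H_3})=0$ and $[\partial V]=0$ in $H_2(V)$, giving the injection $H_2(V)\hookrightarrow H_2(M)$, and your sphere representatives built from essential curves in $F_{12}$ are genuine embedded spheres detecting any non-trivial class.

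The route, however, differs from the paper's. For the key fact that $F_{12}$ consists only of disks, the paper gives a short direct geometric argument (Lemma~\ref{lem:giving_Hsplitting}): if some component of $F_{12}$ is not a disk, pick an essential simple closed curve $C$ in it so that each side of $C$ on $\partial H_1\cong S^2$ meets $F_{13}$; then $C$ bounds disks in both balls $H_1$ and $H_2$, and their union is a sphere with $H_3$ on both sides, hence non-separating. Your approach replaces this with the homotopy equivalence $V\simeq\Sigma F_{12}$ and a Mayer--Vietoris computation pushing $H_2(V)$ into $H_2(M)$. The paper's argument is more elementary and self-contained; yours is more systematic and makes the role of the no-non-separating-sphere hypothesis transparent via $H_2(M)$. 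For the ``if'' direction, the paper first builds a type-$(0,0,g)$ decomposition and then applies type-$1$ stabilizations (Remark~\ref{rem:higher_type_by_type1}) to raise $g$ to $l$, whereas you stabilize the Heegaard splitting first and then cut; these are equivalent. Finally, your case analysis for the unstabilized conclusion is more explicit than the paper's one-line remark, but reaches the same end.
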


To prove the above proposition, we first show the following lemma.

%

\begin{lemma}\label{lem:giving_Hsplitting}
    Let $(H_1, H_2, H_3; P)$ be a type-$(0, 0, l)$ handlebody decomposition of a closed, connected, orientable $3$-manifold $M$.
    Suppose that $M$ does not contain a non-separating sphere.
    Then $(H_1 \cup H_2, H_3)$ is a genus-$l$ Heegaard splitting of $M$.
\end{lemma}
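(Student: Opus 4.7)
The plan is to show every component of $F_{12}$ is a disk; an Euler-characteristic count then forces $F_{12}$ to be a disjoint union of exactly $l+1$ disks, and gluing the two $3$-balls $H_1, H_2$ along these disks produces a handlebody of genus $l$ whose boundary $F_{13} \cup F_{23}$ equals $\partial H_3$, giving the desired genus-$l$ Heegaard splitting together with $H_3$.

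I first observe that $P$ has no true vertices: at such a vertex the local link would be the $1$-skeleton of a tetrahedron in the link $2$-sphere, with four complementary triangular regions any two of which share an edge; with only three handlebodies available, pigeonhole would place two regions into the same $H_i$, requiring an impossible $F_{ii}$-sector between them. Hence the singular graph $B$ is a disjoint union of circles, each bordering $F_{12}$, $F_{13}$, and $F_{23}$; in particular $\chi(\partial F_{ij}) = 0$, and inclusion--exclusion on $\partial H_i = F_{ij} \cup F_{ik}$ gives
\begin{align*}
\chi(F_{12}) + \chi(F_{13}) &= 2, \\
\chi(F_{12}) + \chi(F_{23}) &= 2, \\
\chi(F_{13}) + \chi(F_{23}) &= 2 - 2l,
\end{align*}
so $\chi(F_{12}) = l+1$. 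If every component of $F_{12}$ is a disk, there must be exactly $l+1$ of them.

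The main obstacle is showing each component of $F_{12}$ is a disk, which I would prove by contradiction: any non-disk component will yield a non-separating $2$-sphere in $M$. Suppose $A \subset F_{12}$ has two or more boundary circles and let $C$ be one of them. Since $H_i$ is a $3$-ball for $i = 1, 2$, the curve $C$ bounds a properly embedded disk $D_i \subset H_i$; because $D_1 \cap D_2 \subset H_1 \cap H_2 = F_{12}$ and $D_i \cap F_{12} = C$, the union $\Sigma = D_1 \cup D_2$ is an embedded $2$-sphere in $M$ meeting $\partial H_3$ only in $C$.

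It remains to check $\Sigma$ is non-separating. Each $D_i$ cuts $H_i$ into two $3$-balls meeting $\partial H_i$ in the two disks $R_a, R_b$ of $\partial H_i \setminus C$, where $R_a$ does not contain the rest of $A$ and $R_b$ contains $A \setminus C$ together with the other boundary circles of $A$. Because every circle of $B$ borders all three sectors, a collar of $C$ on its non-$A$ side lies in $F_{i3}$ (so $R_a \cap F_{i3} \neq \emptyset$), and likewise a collar of each other boundary circle of $A$ on its non-$A$ side lies in $F_{i3}$ and is contained in $R_b$ (so $R_b \cap F_{i3} \neq \emptyset$). Thus each of the four sub-balls of $H_1 \cup H_2$ cut by $\Sigma$ meets $H_3$ through $(F_{13} \cup F_{23}) \setminus C$ without crossing $\Sigma$, and $H_3 \setminus C$ is connected, so $M \setminus \Sigma$ is connected. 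Therefore $\Sigma$ is non-separating, contradicting the hypothesis, and the lemma follows.
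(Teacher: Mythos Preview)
Your proof is correct and follows essentially the same strategy as the paper's: both show that any non-disk component of $F_{12}$ yields a non-separating sphere by capping a suitable curve with disks in the two balls $H_1$ and $H_2$, and then conclude that $H_1\cup H_2$ is a genus-$l$ handlebody. Your version is more detailed---you take $C$ to be a boundary circle of the non-disk component (the paper instead uses an essential interior loop of $S$), you explicitly rule out vertices of $P$ and compute $\chi(F_{12})=l+1$, and you spell out the connectivity argument through $H_3\setminus C$ that makes $\Sigma$ non-separating---but the underlying idea is the same.
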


\begin{proof}
\setcounter{claim}{0}

Let $F_{ij}$ denote a surface as in Remark~\ref{rem:cl_Hdecomp}.
We show that the surface $F_{12}$ consists of disks.
Assume that $F_{12}$ contains a non-disk component $S$.
Then there exists an essential simple loop $C$ in $S$ such that each complementary region of $C$ in $\partial H_1 \cong S^2$ contains a connected component of $F_{13}$.
Since $H_1$ and $H_2$ are $3$-balls, the simple loop $C$ bounds a disk in each of $H_1$ and $H_2$.
Then the union of the two disks is a non-separating disk, which is a contradiction.

Thus, $F_{12}$ consists of only disks.
It follows that the union of $H_1$ and $H_2$ is a handlebody, which implies the assertion.
\end{proof}

\begin{proof}[Proof of Proposition~\textup{\ref{prop:00l}}]
    Let $g$ be the Heegaard genus of a closed, connected, orientable $3$-manifold $M$.
    Then, as explained above, $M$ admits a type-$(0, 0, g)$ handlebody decomposition.
    Then, by Remark~\ref{rem:higher_type_by_type1}, we can obtain a type-$(0, 0, l)$ handlebody decomposition of $M$ for each $g \leq l$.
    Conversely, assume that $M$ admits a type-$(0, 0, l)$ handlebody decomposition.
    By Lemma~\ref{lem:giving_Hsplitting}, this handlebody decomposition induces a Heegaard splitting of genus $l$.
    Thus, we have $g \leq l$.
    This particularly implies that a type-$(0, 0, g)$ handlebody decomposition of $M$ is unstabilized.
\end{proof}


\begin{ex}\label{ex:unstabilized}
    An unstabilized type-$(0, 0, 2)$ handlebody decomposition of $S^3$ is constructed as follows.
    Let $(W_1, W_2)$ be a genus-$2$ Heegaard splitting of $S^3$.
    By using~\cite[Section~5 and Figure~4]{Cho}, we can take a non-primitive disk triple of $W_1$, which separates $W_1$ into two $3$-balls.
    We denote the 3-balls by $H_1$ and $H_2$, and put $H_3 = W_2$.
    Then $(H_1, H_2, H_3)$ forms a type-$(0, 0, 2)$ handlebody decomposition of $S^3$.
    Because each component of $F_{12}$ is a non-primitive disk in $W_1$, we can see that the boundary of any properly embedded disk in $H_3$ transversely intersects the singular graph of the partition in at least six points.
    Hence we can not perform a destabilization along any properly embedded disks in $H_3$.
    Therefore, the decomposition is unstabilized.
\end{ex}

Next, we will consider the stabilizability of type-$(0, 1, l)$ handlebody decompositions.

\begin{prop}\label{prop:01l}
    Let $M$ be a closed, connected, orientable, irreducible $3$-manifold.
    Suppose that $M$ is not a lens space with non-trivial finite fundamental group.
    Then, for each $1 \leq l$, any type-$(0,1,l)$ handlebody decomposition of $M$ is stabilized.
    In fact, such a decomposition is obtained from a type-$(0, 0, l)$ handlebody decomposition by performing a type-$1$ stabilization.
\end{prop}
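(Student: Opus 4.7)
The plan is to exhibit a type-$1$ destabilization of the given type-$(0, 1, l)$ decomposition $(H_1, H_2, H_3; P)$ that produces a type-$(0, 0, l)$ decomposition. Such a destabilization is applicable along any non-separating disk $D$ in $H_2$ (necessarily a meridian disk of the solid torus $H_2$) with $|\partial D \cap B| = 2$, where $B$ is the singular graph of $P$. Destabilizing along such a $D$ reduces $H_2$ to a $3$-ball while absorbing only trivial half-ball neighborhoods into $H_1$ and $H_3$, so that $H_1$ remains a ball, $H_3$ remains a genus-$l$ handlebody, and the resulting decomposition is of type $(0, 0, l)$.

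First I would establish $\chi(F_{12}) = l$ from the three identities $\chi(F_{12}) + \chi(F_{13}) = \chi(\partial H_1) = 2$, $\chi(F_{12}) + \chi(F_{23}) = \chi(\partial H_2) = 0$, and $\chi(F_{13}) + \chi(F_{23}) = \chi(\partial H_3) = 2 - 2l$. Since every component of $F_{12}$ is planar (as a subsurface of $\partial H_1 = S^2$), each component has Euler characteristic at most $1$, with equality iff it is a disk. Because $l \ge 1$, $F_{12}$ must contain at least one disk component $S_0$, whose boundary $\partial S_0$ is a trivial simple closed curve on $T^2 = \partial H_2$. The candidate $D$ is then the meridian disk of $H_2$ whose boundary is a meridian curve $\mu$ on $T^2$ that enters $S_0$, crossing $\partial S_0$ at two points, and whose complementary arc in the once-punctured torus $T^2 \setminus S_0$ lies in the adjacent component of $F_{23}$ while avoiding every other component of $B$.

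The main obstacle is ensuring this avoidance is feasible, since $\mu$ is essential on $T^2$ and cannot avoid any essential simple closed curve contained in $B$; one must show that no component of $F_{12}$ has a boundary curve essential on $T^2$. If a component $S'$ has an essential boundary $c$ of meridian slope, then a meridian disk of $H_2$ bounded by $c$ together with a disk in the ball $H_1$ bounded by $c$ forms an embedded sphere in $M$ met transversely in one point by the core of $H_2$, hence a non-separating sphere contradicting irreducibility. If $c$ has another essential slope $s$ on $T^2$ with geometric intersection number $|m \cdot s| = b$ with the meridian slope $m$, an analysis via van Kampen's theorem shows that the annular attachment of $H_1$ to $H_2$ near $S'$ effectively realizes a $2$-handle attachment along slope $s$, and one checks that $\pi_1(H_1 \cup H_2) \cong \mathbb{Z}/b \ast F_{l}$, where the free factor $F_l$ arises from the remaining components of $F_{12}$; for $b \ge 2$ this structure forces $M$ to be either reducible (containing an $S^2 \times S^1$ summand) or a lens space with $\pi_1 \cong \mathbb{Z}/b$, each contradicting the hypotheses. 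In the remaining case $b = 1$, the two essential boundary circles of $S'$ can be used to construct $D$ directly, with $\mu$ crossing each once instead of entering $S_0$. In every case, a suitable disk $D$ is obtained, and the type-$1$ destabilization along $D$ yields the desired type-$(0, 0, l)$ decomposition.
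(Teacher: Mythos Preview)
Your overall strategy is exactly the paper's: locate a meridian disk $D\subset H_2$ with $|\partial D\cap B|=2$ and perform a type-$1$ destabilization. The Euler-characteristic identity $\chi(F_{12})=l$ is a pleasant bookkeeping device that the paper does not make explicit.

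However, your identification of the ``main obstacle'' is incomplete, and this leads to genuine gaps. You argue that the only thing preventing the meridian $\mu$ from meeting $B$ in exactly two points is the presence of essential curves in $B\subset T^2$. That is not so: even when every curve of $\partial F_{12}$ is inessential on $T^2$, the disk component $S_0$ you select need not be adjacent to the unique punctured-torus component of $F_{23}$. For instance, with $l=2$ one can have $F_{12}$ equal to an annulus (with inessential core) together with two disks lying deeper in the nesting, so that neither disk touches the punctured-torus region; your $\mu$ then cannot be essential while staying in $S_0$ and the adjacent $F_{23}$ component. The paper avoids this by first proving structural facts about $F_{12}$: every component has $\chi\ge 0$ (Claim~1), every annulus component has essential core on $T^2$ (Claim~2), and there is at most one annulus component (Claim~3). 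Each of these is established by producing a non-separating sphere from a putative counterexample. Only after this does the paper split into ``all disks'' versus ``one essential annulus plus disks,'' and in both cases the meridian is then immediate.

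Your treatment of the essential-slope case inherits the same problem. The claimed isomorphism $\pi_1(H_1\cup H_2)\cong\mathbb{Z}/b\ast F_l$ silently assumes that $F_{12}$ consists of exactly one annulus (the component $S'$) and $l$ disks---precisely the structure Claims~1--3 provide but which you have not established. With two essential annuli, for example, the same homotopy computation yields $\mathbb{Z}/b\ast F_{l+1}$, not $\mathbb{Z}/b\ast F_l$. Moreover, even granting the fundamental-group computation, the step from $\pi_1(H_1\cup H_2)\cong\mathbb{Z}/b\ast F_l$ to ``$M$ is reducible or a lens space'' is not justified; you would still need the explicit separating sphere that the paper builds (cap the complementary annulus $\partial H_2\setminus A$ with two disks in the $3$-ball $H_1$) to see the lens-space summand geometrically. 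Finally, in your $b=1$ case you invoke ``the two essential boundary circles of $S'$'' without having argued that $S'$ has exactly two---again a consequence of the missing structural analysis.
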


\begin{proof}
    \setcounter{claim}{0}
    We first assume that $F_{12}$ consists of disks.
    Then there exists a meridian disk of $H_2$ whose boundary intersects $\partial F_{12}$ transversely exactly twice.
    Hence we can perform a type-$1$ destabilization along the meridian disk.

    In the remainder, we assume that $F_{12}$ has a non-disk component.
    \begin{claim}
        We have $\chi(S) \geq 0$ for each component $S$ of $F_{12}$.
    \end{claim}

    \begin{proof}[Proof of Claim~\textup{$1$}]
        Suppose that $\chi(S) < 0$.
        Since $S \subset \partial H_1 \cong S^2$, the boundary $\partial S$ has at least three components.
        Then there exists a component $c$ of $\partial S$ such that it is an inessential loop in $\partial H_2 \cong T^2$.
        Hence $c$ bounds a properly embedded disk in $H_2$.
        The closed curve $c$ also bounds a properly embedded disk in $H_1$ since $H_1$ is a $3$-ball.
        Because each complementary region of $c$ in $\partial H_1 \cong S^2$ contains a component of $F_{13}$, the two properly embedded disks in $H_1$ and $H_2$ form a non-separating sphere.
        This contradicts the irreducibility of $M$.
    \end{proof}

    \begin{claim}
        A core curve of each annulus component of $F_{12}$ is essential in $\partial H_2 \cong T^2$.
    \end{claim}

    \begin{proof}[Proof of Claim~\textup{$2$}]
        Assume that a core curve $C$ of an annulus component of $F_{12}$ is inessential in $\partial H_2$.
        Then $C$ bounds a properly embedded disk in $H_2$, and each complementary region of $C$ intersects $F_{23}$.
        Since $H_1$ is a $3$-ball, $C$ also bounds a properly embedded disk in $H_1$.
        Hence the two disks form a non-separating sphere.
        This is a contradiction.
    \end{proof}

    \begin{claim}
        The surface $F_{12}$ contains precisely one annulus component.
    \end{claim}
    \begin{proof}[Proof of Claim~\textup{$3$}]
        We assume that $F_{12}$ contains two annulus components.
        Then, by Claim~2, their core curves, $C_1$ and $C_2$, are parallel essential loops in $\partial H_2 \cong T^2$.
        Thus, $C_1 \cup C_2$ cobounds a properly embedded annulus in $H_2$.
        Since $H_1$ is a $3$-ball, each of $C_1$ and $C_2$ bounds a disk in $H_1$.
        Because each complementary region of $C_1 \cup C_2$ in $\partial H_2$ intersects $F_{23}$,
        the union of the annulus and the disks is a non-separating sphere.
        This is a contradiction.
    \end{proof}

    Let $C$ be a core curve of the annulus component $A$ of $F_{12}$.
    Then we have $[C] = a \mu + b \lambda \in \mathrm{H_1}(\partial H_{2})$, where $a, b \in \mathbb{Z}$, and $\mu$ and $\lambda$ denote homology classes of a meridian loop and a longitude loop, respectively.
    Since $H_1$ is a $3$-ball, each component of $\partial A$ bounds a properly embedded disk in $H_1$.
    Then the union $S$ of $\partial H_2 \setminus \inte(A)$ and the two disks is a separating sphere in $M$.
    Thus, if $b \neq 0$ and $\pm 1$, then $M$ has a lens space as a connected summand.
    However, $M$ is irreducible and not a lens space.
    Hence we have $b = 0$ or $\pm 1$.
    If $b = 0$, then $C$ bounds a properly embedded disk in $H_2$.
    The curve $C$ also bounds a properly embedded disk in $H_1$ since $H_1$ is a $3$-ball.
    So, the disks form a non-separating sphere, which is a contradiction.
    Hence we have $b = \pm 1$.
    Thus, we can take a meridian disk of $H_2$ that intersects the boundary of $F_{12}$ transversely exactly two points.
    Therefore, we can perform a type-$1$ destabilization along the meridian disk.
\end{proof}

The following proposition implies that the assumption that $M$ is not a lens space in Proposition~\ref{prop:01l} is essential.

\begin{prop}\label{prop:012}
    Any lens space with non-trivial finite fundamental group admits an unstabilized type-$(0, 1, 2)$ handlebody decomposition.
\end{prop}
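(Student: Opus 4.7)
The plan is to exhibit, for each lens space $L = L(p,q)$ with $p \geq 2$, an explicit simple proper type-$(0,1,2)$ handlebody decomposition of $L$ and argue directly that no destabilization is available by exploiting the non-triviality of $\fund(L) \cong \mathbb{Z}/p$.

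First, I would begin with the standard genus-one Heegaard splitting $L = V_1 \cup_T V_2$ and choose a properly embedded arc $\alpha \subset V_2$ with $\partial \alpha \subset T$. Setting $H_3 := V_1 \cup N(\alpha)$ produces a genus-$2$ handlebody (a solid torus together with a $1$-handle), and an Euler characteristic computation gives that $U := V_2 \setminus \inte\,N(\alpha)$ is also a genus-$2$ handlebody. I would then select two parallel non-separating properly embedded disks $D_1, D_2 \subset U$ that separate $U$ into a slab $H_1 \cong D^2 \times [0,1]$ (a $3$-ball) and a complementary solid torus $H_2$. The partition $P := T \cup (\partial N(\alpha) \cap V_2) \cup D_1 \cup D_2$ then gives a simple proper type-$(0,1,2)$ handlebody decomposition of $L$.

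To ensure unstabilization, the arc $\alpha$ and the disks $D_1, D_2$ are to be chosen so that $\partial D_1 \cup \partial D_2 \subset \partial H_3$ represents a \emph{non-primitive} pair, meaning that no properly embedded disk in $H_3$ meets $\partial D_1$ (equivalently $\partial D_2$) transversely in exactly one point. Since $\fund(L) = \mathbb{Z}/p$ is non-trivial, there is room to force the intersection number of $\partial D_i$ with the boundary of any standard disk basis of $H_3$ (spanned by a meridian of $V_1$ and a co-core of $N(\alpha)$) to be a multiple of $p$ on the Heegaard torus $T$; this is achieved by an appropriate choice of the arc $\alpha$, whose $1$-handle is threaded in a way compatible with the lens-space surgery data.

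Ruling out destabilizations then becomes a three-case argument. A type-$0$ destabilization can only reduce $(0,1,2)$ to $(0,0,1)$, since the other pairings would give a negative genus, and such a reduction requires a properly embedded disk in $H_3$ bounded in $F_{23}$ dual to a meridian of $H_2$, which is exactly the configuration excluded by non-primitivity. A type-$1$ destabilization requires a non-separating disk in $H_2$ or $H_3$ whose boundary meets the singular graph (contained in $\partial D_1 \cup \partial D_2$ together with the boundary circles of the $N(\alpha)$-attaching disks) transversely in exactly two points; an intersection-number calculation in $\Hm(\partial H_2)$ or $\Hm(\partial H_3)$ shows any such boundary meets $\partial D_1 \cup \partial D_2$ at least $p \geq 2$ times, so no two-point configuration is possible. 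A type-$2$ destabilization demands a disk component of some $F_{jk}$ paired with a non-separating disk in $H_i$ whose boundary it meets once; the only disk components of $P$'s sectors are $D_1$ and $D_2$ inside $F_{12}$, and $H_1$ has no non-separating disks, while capping $D_i$ through $H_3$ is again blocked by non-primitivity.

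The hard part will be making the non-primitivity condition rigorous and uniform over all lens space parameters $(p,q)$ with $p \geq 2$: one must track homology classes on $T$ and on $\partial H_3$ through the $1$-handle addition $N(\alpha)$, and control the intersection behavior of \emph{every} possible destabilizing disk boundary, not only those coming from a fixed disk basis. The arithmetic needed ties the non-primitivity count directly to the divisibility by $p$ of the relevant intersection pairing, which is where the hypothesis $p \geq 2$ is essential.
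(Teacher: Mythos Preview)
Your construction has a genuine gap: it is always type-$0$ stabilized, regardless of how you thread the arc $\alpha$. The point is that the $1$-handle $N(\alpha)$ you attach to $V_1$ carries its own co-core disk $\nu \subset H_3$, and this disk has nothing to do with the non-primitivity of $\partial D_1, \partial D_2$. Concretely, $\partial\nu$ is a meridian circle of the tube $\partial N(\alpha)$ and sits entirely inside $F_{23}$; on the torus $\partial H_2$ it is isotopic to the core of $H_2 = B\setminus N(\alpha)$ (it links $\alpha$ once), hence it is a longitude of $H_2$. The meridian disk $\mu_2$ of $H_2$ can be isotoped so that $\partial\mu_2 \subset F_{23}$ (since $D_1,D_2$ are inessential disks on $\partial H_2$) and $|\partial\mu_2 \cap \partial\nu| = 1$. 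This is exactly the canceling pair required for a type-$0$ destabilization with $(i,j)=(3,2)$, reducing your decomposition to a type-$(0,0,1)$ decomposition of $L(p,q)$ --- indeed, just the genus-one Heegaard splitting you began with, with $V_2$ sliced by the two parallel meridian disks. Your non-primitivity argument only controls disks in $H_3$ whose boundaries interact with $\partial D_1 \cup \partial D_2$; it says nothing about $\nu$.

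The paper's approach avoids this trap by building the decomposition in the opposite direction. It starts from a genus-$2$ Heegaard splitting $(W_1,W_2)$ of $S^3$ and a \emph{non-primitive} pair of disks $D_1,D_2 \subset W_1$ (as in Example~\ref{ex:unstabilized}, using~\cite{Cho}); then $H_3 = W_2$ and $H_1 = N(D_1)\cup N(D_2)\cup h$ for an unknotted arc $h$, so that $H_1 \cup H_3$ is a solid torus whose complementary solid torus $H_2$ may be glued to realise any lens space. Because the singular graph is the boundary of the non-primitive pair, the paper gets the uniform bound that every meridian disk of $H_2$ meets it at least four times and every meridian disk of $H_3$ at least six times, killing all three destabilization types simultaneously. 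In your setup no such lower bound is available for $H_2$ or for the co-core direction of $H_3$, and that is what breaks the argument.
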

\begin{proof}
    Let $(W_1, W_2)$ be a genus-$2$ Heegaard splitting of $S^3$.
    Then there exists a pair of non-primitive disks $D_1$ and $D_2$ in $W_1$ as in Example~\ref{ex:unstabilized}.
    Note that $B := N(D_1; W_1) \cup N(D_2; W_1) \cup W_2$ is a $3$-ball, where $N(D_1; W_1)$ and $N(D_2; W_1)$ are regular neighborhoods of $D_1$ and $D_2$, respectively.
    We take an unknotted arc $\delta$ in the interior of $W_1$ that joins $D_1$ and $D_2$ and intersects them at only its endpoints.
    Let $h$ be a $1$-handle attached to sides of each $N(D_1; W_1)$ and $N(D_2; W_1)$ along $\delta$.
    Then, $h \cup B$ is a solid torus (see Figure~\ref{fig:012}).
    Hence, for any lens space $M$, there exists a homeomorphism $\psi$ from $\partial (h \cup B)$ to the boundary of a solid torus $H_2$ such that $M$ is homeomorphic to the manifold pasted by $h \cup B$ and $H_2$ along $\psi$. 
    We put $H_1 = h \cup N(D_1; W_1) \cup N(D_2; W_1)$, $H_3 = W_2$.
    Thus, $(H_1, H_2, H_3)$ is a type-$(0, 1, 2)$ handlebody decomposition of $M$.
    By the construction, each meridian disk of $H_2$ and $H_3$ intersects the singular graph at least four and six times, respectively.
    Hence, the handlebody decomposition is unstabilized.
%
%
%
%
\end{proof}

\begin{figure}[htbp]
    \centering
    \includegraphics[width=.63\textwidth]{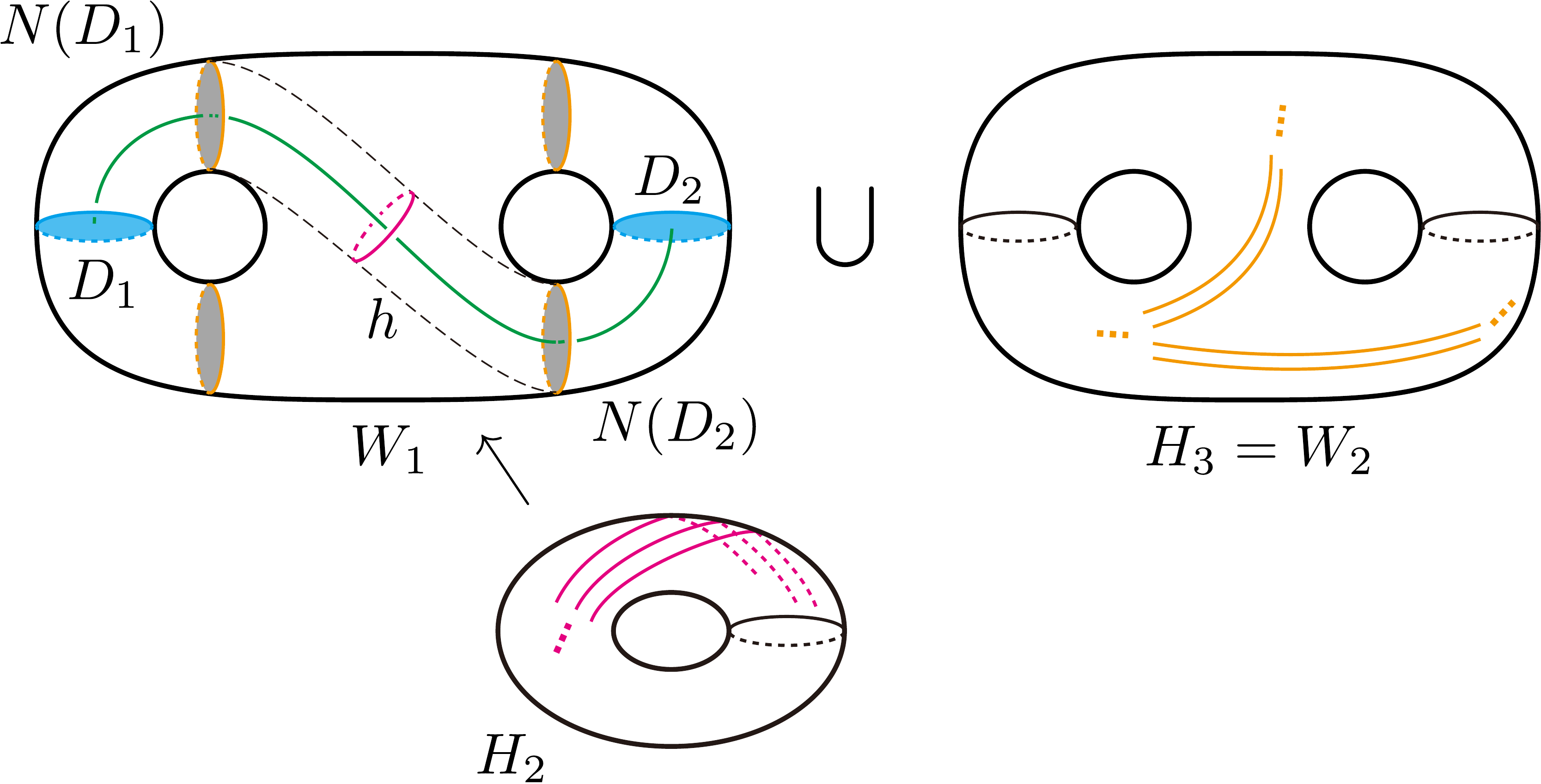}
    \caption{An unstabilized type-$(0, 1, 2)$ handlebody decomposition of a lens space with non-trivial finite fundamental group. The decomposition consists of three handlebodies $H_1 = h \cup N(D_1) \cup N(D_2)$, $H_2$, and $H_3 = W_2$.}
    \label{fig:012}
\end{figure}

\subsection{Examples: the $3$-dimensional torus}
\label{subsec:3torus}

We will show some examples of handlebody decompositions of the $3$-dimensional torus $T^3$.


%

First, we consider handlebody decompositions consisting of one ball and two handlebodies.
By Proposition~\ref{prop:00l}, $\Torus$ admits a unstabilized type-$(0, 0, 3)$ handlebody decomposition (see Figure~\ref{fig:nxx}(a)).
Thus, for $k \geq 0$ and $l \geq 3$, $\Torus$ admits a type-$(0, k, l)$ handlebody decomposition by Remark~\ref{rem:higher_type_by_type1}.
Figure~\ref{fig:nxx}(b) illustrates a type-$(0, 2, 2)$ handlebody decomposition of $\Torus$.
On the other hand, by Theorem~\ref{thm:small_genera}, $\Torus$ admits neither type-$(0, 0, 0)$, type-$(0, 0, 1)$, nor type-$(0, 1, 1)$ handlebody decompositions.
In addition, by Propositions~\ref{prop:00l} and~\ref{prop:01l}, there is no type-$(0, 1, 2)$ handlebody decomposition of $\Torus$.
Hence, any type-$(0, 2, 2)$ handlebody decomposition of $\Torus$ is unstabilized.
In summary, we have the following proposition.

\begin{prop}\label{prop:handledecomp_T3}
    Let $(k, l)$ be a pair of non-negative integers with $k \leq l$.
    The $3$-dimensional torus $T^3$ admits a type-$(0, k, l)$ handlebody decomposition if and only if the pair $(k, l)$ is not in $\{ 0, 1 \} \times \{ 0, 1, 2 \}$.
\end{prop}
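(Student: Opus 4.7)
The plan is to split the proof into existence and non-existence halves, both leveraging results already established in this section, namely Propositions~\ref{prop:00l} and~\ref{prop:01l}, Remark~\ref{rem:higher_type_by_type1}, and Theorem~\ref{thm:small_genera}.

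For existence I will first handle the range $l \geq 3$ uniformly and then deal with the isolated pair $(k,l)=(2,2)$ by hand. Since $T^3$ is irreducible (so in particular contains no non-separating sphere) and its Heegaard genus equals $3$, Proposition~\ref{prop:00l} immediately supplies a type-$(0,0,l)$ handlebody decomposition for every $l \geq 3$. Applying Remark~\ref{rem:higher_type_by_type1} repeatedly then raises the second coordinate from $0$ to any prescribed $k \geq 0$, producing the desired type-$(0,k,l)$ decomposition. This recipe cannot reach $(0,2,2)$, so for that case I will exhibit an explicit decomposition by hand, as drawn in Figure~\ref{fig:nxx}(b).

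For non-existence I will eliminate the five forbidden pairs $(0,0), (0,1), (0,2), (1,1), (1,2)$ in turn. The cases $(0,0)$, $(0,1)$, and $(1,1)$ follow at once from parts~(1)--(3) of Theorem~\ref{thm:small_genera}: every manifold on those classification lists has fundamental group a free product of cyclic groups, whereas $\pi_1(T^3) \cong \mathbb{Z}^3$ is not of this form, so $T^3$ cannot occur. The case $(0,0,2)$ is excluded by the ``only if'' direction of Proposition~\ref{prop:00l}, which forces $l \geq g(T^3) = 3$. Finally, for $(0,1,2)$ I will invoke Proposition~\ref{prop:01l}: since $T^3$ is irreducible and not a lens space with non-trivial finite fundamental group, any hypothetical type-$(0,1,2)$ decomposition would arise from a type-$(0,0,2)$ one by a single type-$1$ stabilization, contradicting what has just been established. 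The main obstacle is the explicit construction in the $(0,2,2)$ case and the verification that the two pieces it produces really are genus-$2$ handlebodies; every other step is a direct application of the cited results.
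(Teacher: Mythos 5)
Your proposal is correct and follows essentially the same route as the paper: Proposition~\ref{prop:00l} plus Remark~\ref{rem:higher_type_by_type1} for existence when $l \geq 3$, the explicit Figure~\ref{fig:nxx}(b) decomposition for $(0,2,2)$, Theorem~\ref{thm:small_genera} for the pairs $(0,0)$, $(0,1)$, $(1,1)$, and Propositions~\ref{prop:00l} and~\ref{prop:01l} for $(0,2)$ and $(1,2)$. Your added remark that $\pi_1(T^3)\cong\mathbb{Z}^3$ is not a free product of cyclic groups is just a welcome explicit justification of why $T^3$ is absent from the classification lists, which the paper leaves implicit.
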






\begin{figure}[htbp]
    \centering
    \begin{minipage}[b]{0.24\textwidth}
        \centering
        \includegraphics[width=0.80\textwidth]{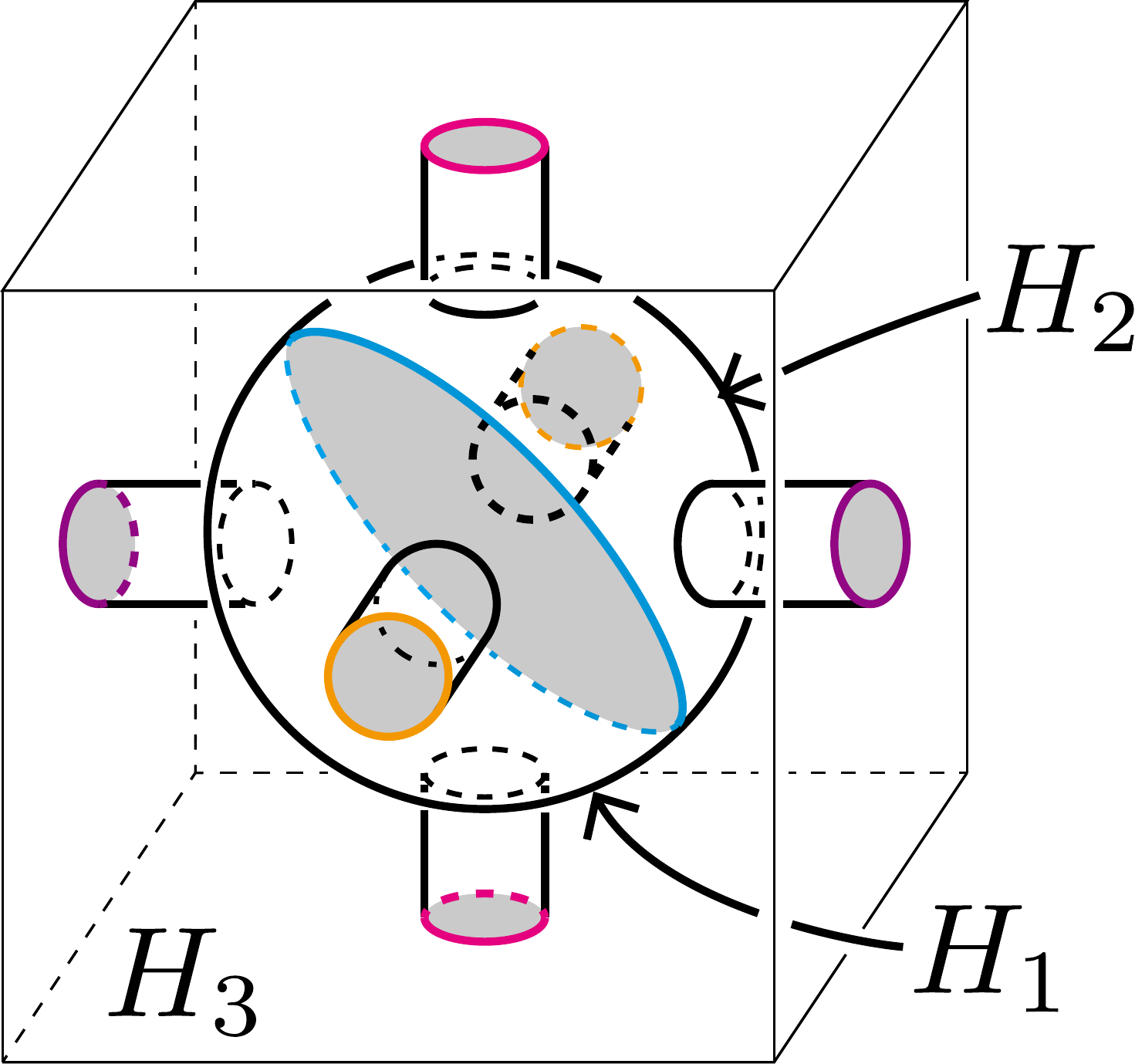}\\
        (a)
    \end{minipage}
    \begin{minipage}[b]{0.24\textwidth}
        \centering
        \includegraphics[width=0.80\textwidth]{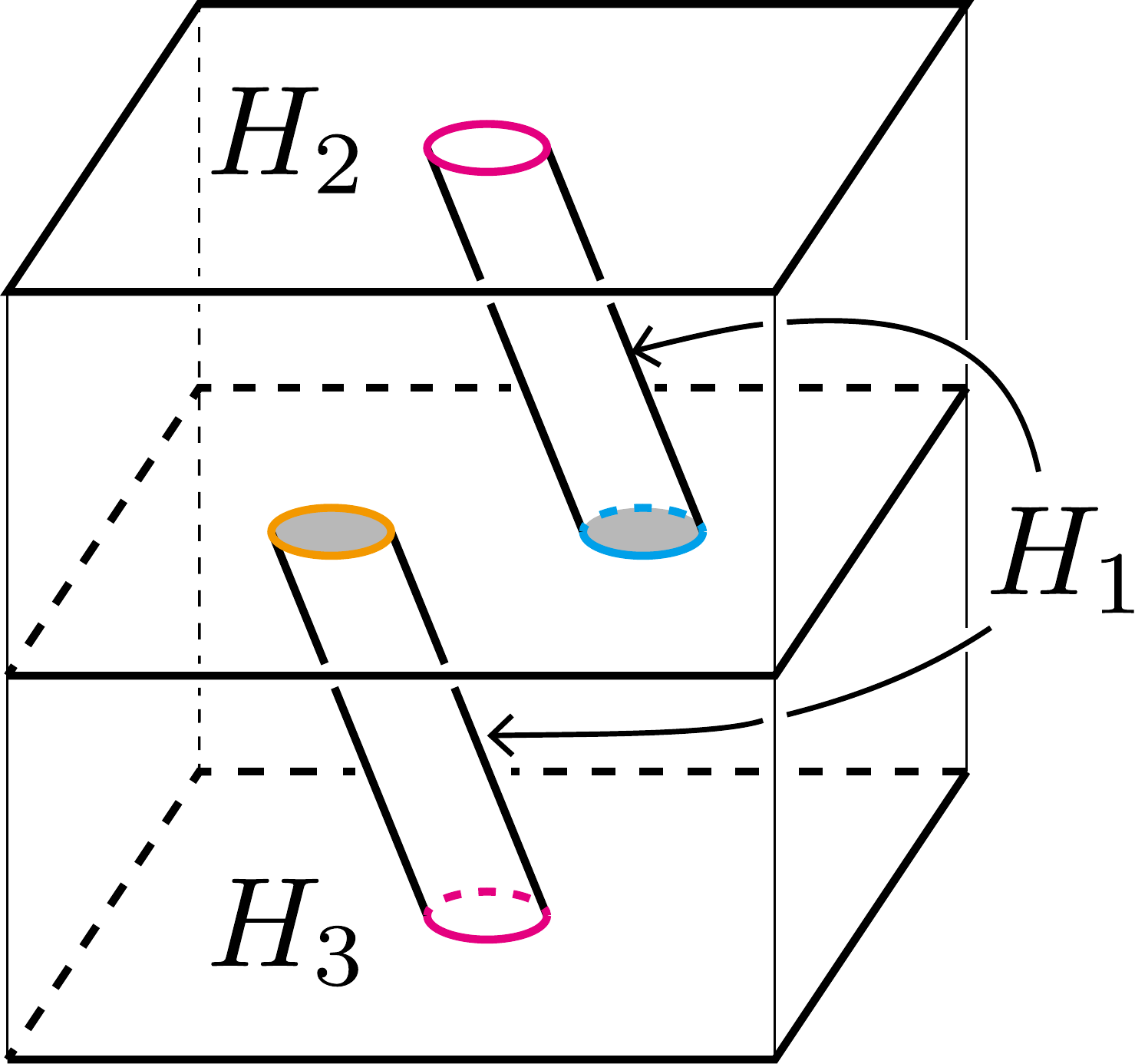}\\
        (b)
    \end{minipage}
    \begin{minipage}[b]{0.24\textwidth}
        \centering
        \includegraphics[width=0.70\textwidth]{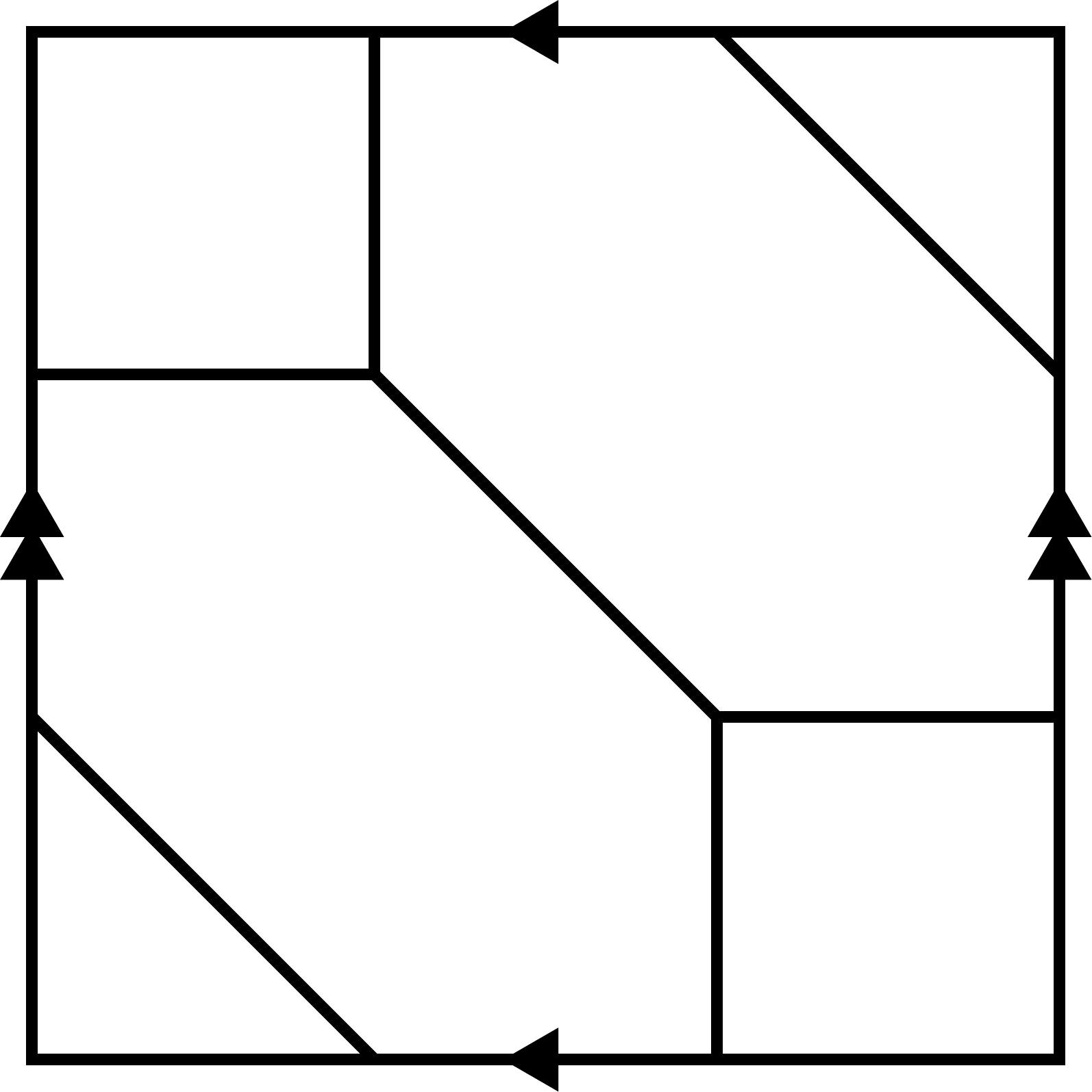}\\
        (c)
    \end{minipage}
    \begin{minipage}[b]{0.24\textwidth}
        \centering
        \includegraphics[width=0.70\textwidth]{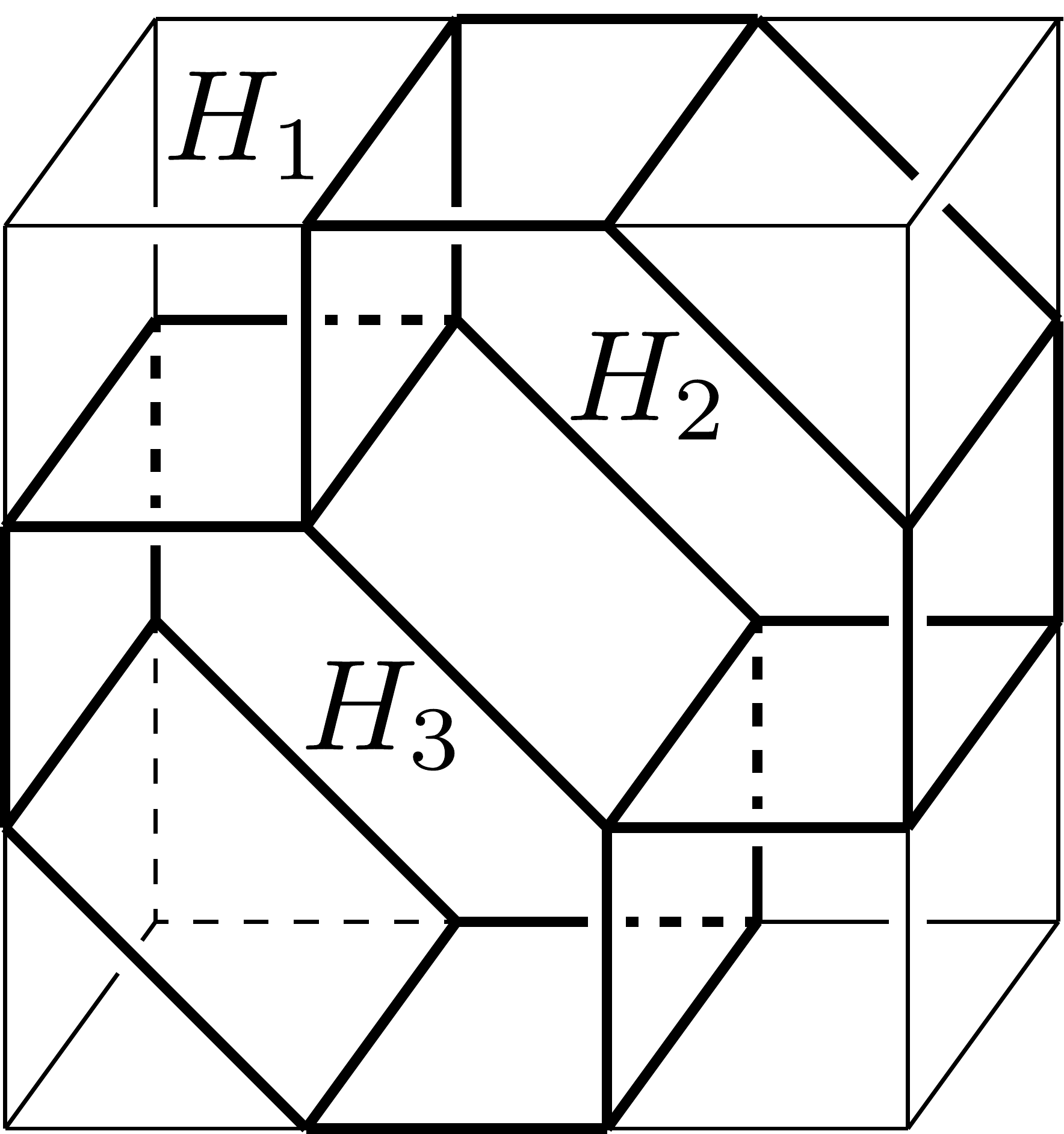}\\
        (d)
    \end{minipage}
    \caption{(a) A type-$(0,0,3)$ handlebody decomposition of $\Torus$. Colored edges illustrate the singular graph, and grayed disks are all components of $F_{12}$ as in Remark~\ref{rem:cl_Hdecomp}. %
    (b) A type-$(0, 2, 2)$ handlebody decomposition of $T^3$. %
    (c) A decomposition of $T^2$ into three hexagons. %
    (d) The hexagonal honeycomb decomposition of $\Torus$.}
\label{fig:nxx}
\end{figure}




Theorem~\ref{thm:small_genera} guarantees that $\Torus$ admits a type-$(1, 1, 1)$ handlebody decomposition.
Figure~\ref{fig:nxx}(c) shows a decomposition of $T^2$ into three hexagons.
By taking the product with $S^1$, we have a decomposition of $T^3$ into three solid tori.
We call this handlebody decomposition the \emph{hexagonal honeycomb decomposition} (see Figure~\ref{fig:nxx}(d)).
In general, a $3$-manifold admits a lot of handlebody decompositions of the same type.
The next proposition asserts that the hexagonal honeycomb decomposition is the unique type-$(1, 1, 1)$ handlebody decomposition of $\Torus$ up to self-homeomorphism of $\Torus$.

\begin{prop}\label{prop:honeycomb_decomp}
    For a simple and proper type-$(1, 1, 1)$ handlebody decomposition of $\Torus$, there exists a self-homeomorphism of $\Torus$ that maps the partition of the decomposition to that of the hexagonal honeycomb decomposition.
\end{prop}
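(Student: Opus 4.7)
My plan is to first pin down the combinatorics of $P$ by a local properness argument, then to use a $\fund$ computation on $\overline{H_1}\cup\overline{H_2}$ to force the number of triple circles to be exactly $6$ and the three cores to be parallel, and finally to verticalize the decomposition inside $\Torus=T^2\times S^1$ and reduce to the uniqueness of the $3$-hexagon tiling of $T^2$.

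\textbf{Step 1.} First I will show the singular graph $B$ of $P$ is a disjoint union of $c$ circles with no true vertices: at a true vertex the link is $K_4$, whose four complementary regions are pairwise adjacent across sectors, so properness would force $n\geq 4$, contradicting $n=3$. At each triple circle the three cyclically adjacent local regions similarly must lie in three distinct handlebodies, so the three sectors meeting a triple circle lie one each in $F_{12}$, $F_{13}$, $F_{23}$. Hence on each torus $\partial H_i=F_{ij}\cup F_{ik}$, the triple circles form $c$ parallel essential simple loops dividing $\partial H_i$ into $c$ annuli that alternate between components of $F_{ij}$ and of $F_{ik}$; in particular $c$ is even and each $F_{ij}$ is a disjoint union of $c/2$ annuli.

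\textbf{Step 2.} Let $\gamma\in\Hm(\Torus)$ be the class of any triple circle. Writing its slope on $\partial H_i$ as $p_im_i+q_i\ell_i$ and using that $m_i$ bounds in $H_i$ while $\ell_i$ represents the core class $\beta_i$, one obtains $\gamma=q_i\beta_i$ for every $i$. Since each $\alpha_i$ and each triple circle is an embedded simple loop in $\Torus$, both $\beta_i$ and $\gamma$ are either primitive or zero. Because $\beta_3$ is primitive, $\overline{H_1}\cup\overline{H_2}=\Torus\setminus H_3\cong(T^2\setminus D^2)\times S^1$ has $\fund\cong F_2\times\mathbb{Z}$. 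On the other hand, $\overline{H_1}\cup\overline{H_2}$ is the amalgamation of two solid tori along the $c/2$ annuli of $F_{12}$, and a van Kampen calculation with $c/2-1$ HNN stable letters and $c/2$ amalgamation relations gives $\fund\cong F_{c/2+1}$ when $q_1=q_2=0$ (all relations trivial) and $\fund\cong\mathbb{Z}\times F_{c/2-1}$ when $|q_1|=|q_2|=1$ (the amalgamation identifies $x_1=x_2$ and the HNN letters commute with it). A free group has trivial center while $F_2\times\mathbb{Z}$ does not, ruling out the first case; matching $\mathbb{Z}\times F_{c/2-1}\cong F_2\times\mathbb{Z}$ then forces $c=6$, and the primitivity of $\gamma=q_i\beta_i$ with primitive $\beta_i$ gives $|q_i|=1$ and $\beta_1=\beta_2=\beta_3=\pm\gamma$.

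\textbf{Step 3.} After conjugating by a self-homeomorphism of $\Torus$ (realized by an element of $GL(3,\mathbb{Z})$), I may assume $\gamma$ is the fiber class of a product structure $\Torus=T^2\times S^1$. The three disjoint cores $\alpha_i$ and the six triple circles, all simple loops in the fiber class, will be simultaneously ambient-isotoped to $9$ distinct vertical fibers. Each annular component of $F_{ij}$ is then an essential annulus in the Seifert-fibered $\Torus$ with saturated boundary and fiber-parallel core, so by the theory of essential surfaces in Seifert-fibered spaces these annuli can be simultaneously isotoped to saturated annuli $\alpha_{ij}^{(k)}\times S^1$ for arcs $\alpha_{ij}^{(k)}\subset T^2$; the handlebodies $H_i$ are then the components of the complement and take the saturated form $D_i\times S^1$ for disks $D_i\subset T^2$. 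The decomposition descends through $\Torus\to T^2$ to a tiling of $T^2$ by three closed disks glued along a trivalent graph $B\subset T^2$ with three faces, and Euler's formula together with a proper $3$-coloring (from properness at every vertex) forces $B$ to have $6$ vertices and $9$ edges, each face a hexagon, and dual multigraph $K_3$ with each edge tripled. Such a tiling is unique up to self-homeomorphism of $T^2$, and crossing this homeomorphism with $\mathrm{id}_{S^1}$ yields the required self-homeomorphism of $\Torus$ carrying the partition of our decomposition onto that of the hexagonal honeycomb decomposition. The most delicate part is this verticalization: promoting the algebraic parallelism of all the curves to a single ambient isotopy that simultaneously saturates the three solid tori and the nine annular sectors while respecting adjacencies at the six triple circles relies on standard but non-trivial facts about saturated tubular neighborhoods of regular fibers and essential vertical annuli in Seifert-fibered $3$-manifolds.
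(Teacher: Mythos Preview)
There are two genuine gaps. In Step~1 you assert that the triple circles are essential on each $\partial H_i$, hence that every $F_{ij}$ is a union of annuli; but an innermost inessential triple circle on some $\partial H_i$ would bound a disk component of $F_{ij}$, and your local properness argument does not exclude this. Ruling out such disks is exactly the content of Claim~1 in the paper, and it needs real input: the primeness of $T^3$ together with the non-existence of a type-$(0,1,1)$ decomposition (Proposition~\ref{prop:handledecomp_T3}). In Step~2 the key assertion ``both $\beta_i$ and $\gamma$ are either primitive or zero'' is false in $T^3$: for instance $t\mapsto(e^{2it},e^{i\cos t},e^{i\sin t})$ is an embedded circle of homology class $(2,0,0)$. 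So your dichotomy $q_i\in\{0,\pm1\}$ is unjustified, and more seriously, even granting that $\beta_3$ is primitive, the identification $T^3\setminus H_3\cong (T^2\setminus D^2)\times S^1$ does not follow---a primitive-class circle in $T^3$ can be knotted (band-sum a geodesic with a local trefoil)---so you cannot read off $\pi_1(\overline{H_1}\cup\overline{H_2})$ this way without already knowing how $H_3$ sits, which is precisely what is to be determined.

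The paper sidesteps both problems. After Claim~1 every $F_{ij}$ is a union of annuli; after ruling out meridional annuli (via the non-existence of a type-$(0,0,1)$ decomposition) the three solid tori glue to a Seifert fibration of $T^3$, whose uniqueness up to self-homeomorphism immediately verticalizes the picture as $H_i=D_i\times S^1$; an Euler-characteristic and alternation count on the base $T^2$ then forces three arcs in each $D_i\cap D_j$ and hence the hexagonal combinatorics. This replaces your $\pi_1$ computation entirely and makes the delicate simultaneous verticalization of Step~3 unnecessary.
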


\begin{proof}
\setcounter{claim}{0}
Let $(H_1, H_2, H_3)$ be a simple and proper type-$(1, 1, 1)$ handlebody decomposition of $\Torus$.
Let $F_{ij}$ denote a surface as in Remark~\ref{rem:cl_Hdecomp}.

\begin{claim}
For any $1 \leq i < j \leq 3$, there is no disk component in $F_{ij}=H_i\cap H_j$.
\end{claim}
\begin{proof}[Proof of Claim~\textup{$1$}]
    Suppose there is a disk component $D$ in some $F_{ij}$.
    Without loss of generality, we can assume that $D \subset F_{23}$.
    If $\partial D$ is essential in $\partial H_1$,
    $H\upr_1=H_1 \cup N(D)$ is a punctured lens space.
    Since $T^3$ is prime, $H\upr_1$ is a $3$-ball.
    Thus, a triple $(H\upr_1, \mathrm{cl}(H_2 \setminus N(D)), \mathrm{cl}(H_3 \setminus N(D)))$ gives a simple and proper type-$(0, 1, 1)$ handlebody decomposition of $T^3$.
    However, from Proposition~\ref{prop:handledecomp_T3}, $T^3$ does not admit such a decomposition, which is a contradiction.

    Suppose $\partial D$ is inessential in $\partial H_1$.
    Then we can take a disk $D\upr$ in $\partial H_1$ such that $\partial D\upr=\partial D$.
    Put $S=D\cup D\upr$.
    Since $\Torus$ does not contain non-separating spheres, $S$ is separating.
    Thus, $F_{23}$ consists of only the disk $D$.
    Hence, $H_2 \cup H_3$ is a genus-$2$ handlebody.
    This is impossible because $\partial (H_2 \cup H_3) = \partial H_1$ is a torus.
\end{proof}

By Claim~1 and~\cite[LEMMA~1]{Gomez87}, each component of $F_{ij}$ is an annulus.
Suppose the core of an annulus of $F_{ij}$ is meridional in $H_i$.
Then, $C$ is longitudinal in $H_j$; otherwise, we can find a punctured lens space or a punctured $S^2 \times S^1$ in $T^3$, which is a contradiction.
Then, by removing the neighborhood of a meridian disk in $H_i$ and attaching it to $H_j$, 
we have a decomposition of $T^3$ with two $3$-balls and a solid torus, 
i.e., a type-$(0,0,1)$ decomposition, which contradicts Proposition~\ref{prop:handledecomp_T3}.
Therefore, $H_1$, $H_2$, and $H_3$ are fiber tori of a Seifert fibration of $\Torus$.
As the Seifert fiber structure of $T^3$ is unique up to self-homeomorphism of $\Torus$,
we can assume that $H_i = D_i \times S^1$ ($i = 1, 2, 3$), where $D_1$, $D_2$, and $D_3$ are disks in $T^2$ satisfying $D_1\cup D_2\cup D_3 = T^2$.
By the Euler characteristic, the intersection of two of the disks consists of precisely three arcs.
Hence, this structure corresponds to the hexagonal honeycomb decomposition in $\Torus$.
\end{proof}

When it comes to the case of type-$(2, 2, 2)$ handlebody decompositions,
the uniqueness no longer holds, as we see in the following example.

\begin{ex}\label{ex:nonequiv}
    By performing type-$1$ stabilizations to the honeycomb decomposition three times, we have the two type-$(2, 2, 2)$ handlebody decompositions shown in Figure~\ref{fig:type222}.
    In Figure~\ref{fig:type222}(a), each $F_{ij} = H_i \cap H_j$ is homeomorphic to the disjoint union of a $2$-holed torus and a disk.
    On the other hand, in Figure~\ref{fig:type222}(b), the surface $F_{23}$ is homeomorphic to the disjoint union of a $1$-holed torus and an annulus, and each $F_{1j}$ is homeomorphic to a $3$-holed sphere.
    Hence, they are different decompositions.
\end{ex}

\begin{figure}[htbp]
    \centering
    \begin{minipage}{0.22\textwidth}
        \centering
        \includegraphics[width=0.90\textwidth]{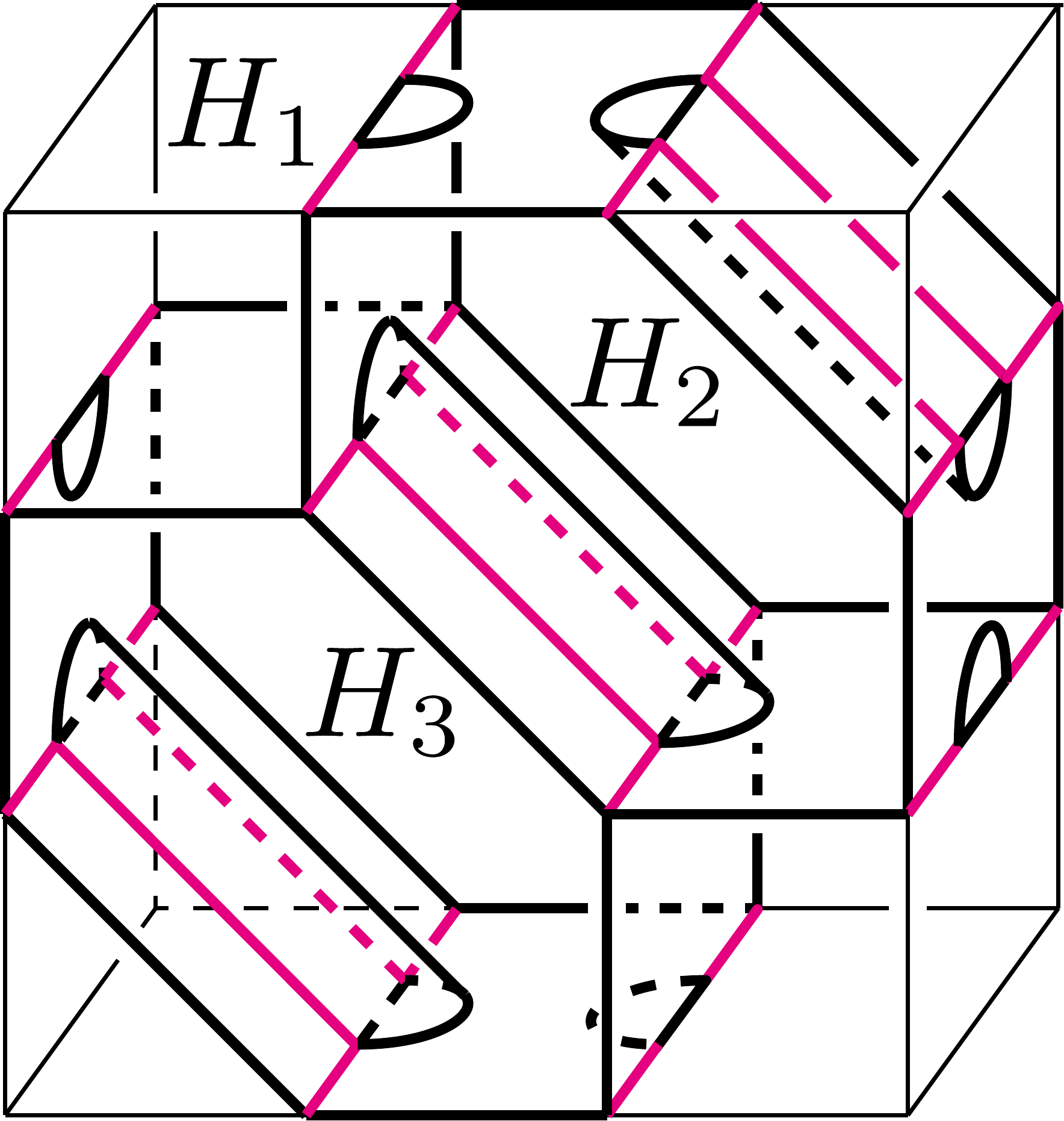}\\
        (a)
    \end{minipage}
    \begin{minipage}{0.22\textwidth}
        \centering
        \includegraphics[width=0.90\textwidth]{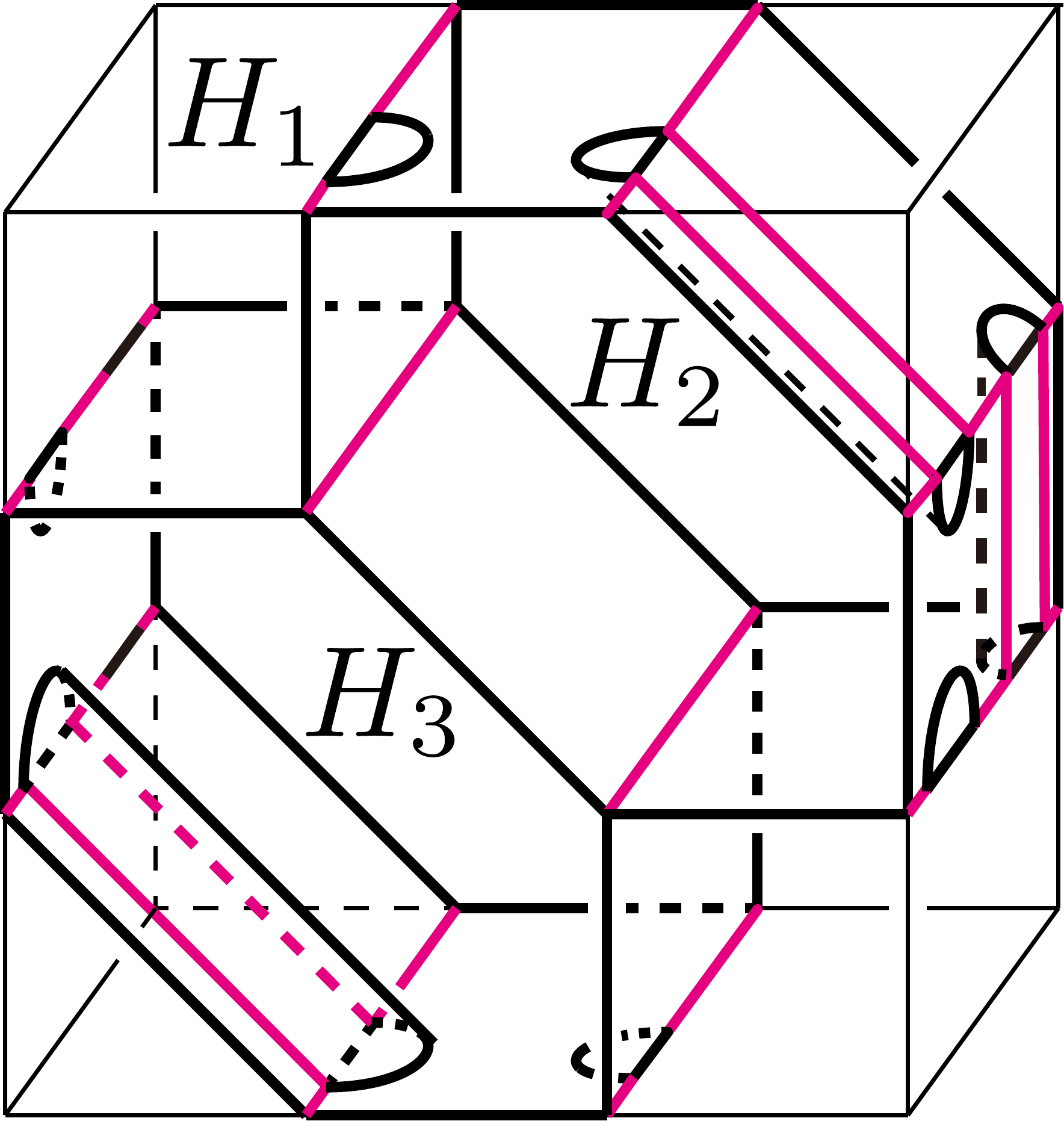}\\
        (b)
    \end{minipage}
    \caption{A pair of different type-$(2, 2, 2)$ handlebody decompositions of $\Torus$.}
\label{fig:type222}
\end{figure}

\section{Topological study of polycontinuous patterns}
\label{sec:polyconti}

In this section, we will consider ``polycontinuous patterns,'' which are roughly $3$-periodic structures assembled by polymers.
See, for example, \cite{Hyde1, Sch} for studies on polycontinuous patterns.
We will suggest a mathematical model of polycontinuous patterns (Definition~\ref{dfn:polyconti}).

\subsection{Polycontinuous patterns and net-like patterns}

First, we define ``net-like patterns'' that satisfy the essential properties of polycontinuous patterns.


\begin{dfn}
    We denote by $T^d$ the $d$-dimensional torus.
    Let $\widetilde{X}$ be a graph embedded in $\R^d$ such that each component of $\widetilde{X}$ is unbounded.
    If there exists a covering map $\pi: \R^d \to T^d$ such that all covering transformations of $\pi$ preserve $\widetilde{X}$, then $\widetilde{X}$ is called a \emph{net}.
\end{dfn}

In this paper, we mainly discuss the case where $d=3$.

\begin{rem}
    In crystal chemistry (e.g.~\cite{Delgado}), the term ``net'' means a periodic, connected, simple, abstract graph.
    In this paper, we allow a net to be disconnected.
    Furthermore, all nets are embedded in Euclidean space.
\end{rem}

\begin{dfn}\label{dfn:pattern}
    Let $\widetilde{P}$ be a non-compact connected $2$-dimensional polyhedron embedded in $\mathbb{R}^3$.
    The polyhedron $\widetilde{P}$ is called a \emph{\patt} if there exist a covering map $\pi: \R^3 \to \Torus$ and a net $\widetilde{X}$ such that the following conditions hold:
    \begin{enumerate}[label=\textup{(\arabic*)}]
        \item All covering transformations of $\pi$ preserve both $\widetilde{P}$ and $\wt{X}$.
        \item The polyhedron $\widetilde{P}$ divides $\mathbb{R}^3$ into unbounded open components $V_i$ $(i \in I)$, where $I$ is a finite or countable set.
        \item There exists a strong deformation retraction of $\mathbb{R}^3 \setminus \wtP$ onto $\widetilde{X}$.
    \end{enumerate}
    We call the pair $(\widetilde{P}, \pi)$ a \emph{framed \patt}, and $\pi$ its \emph{frame}.
    We say that a connected component of $\R^3 \setminus \wtP$ (resp. $\wt{X}$) is a \emph{labyrinthine domain} (resp. \emph{labyrinthine net}) of $\wtP$.

    A \patt $\wtP$ is said to be \emph{proper} if there is no simple closed curve in $\mathbb{R}^3$ that does not cross the singular graph and intersects a sector of $\widetilde{P}$ transversely once.
    A \patt $\wtP$ is said to be \emph{simple} if $\widetilde{P}$ is a simple polyhedron. 
\end{dfn}

More generally, we can define net-like patterns for any closed prime $3$-manifold with a (possibly non-Euclidean) crystallographic group and its covering space, but this paper will not deal with it.

\begin{rem}
    Consider two \patts that satisfy the following conditions:
    \begin{enumerate}[label=(\arabic*)]
        \item They have the same labyrinthine net.
        \item They do not have a disk sector.
        \item The singular graphs of them have no vertices.
    \end{enumerate}
    Then, they can be transformed to each other by a (possibly infinite) sequence of \emph{IX-moves}, \emph{XI-moves}, and isotopies (see~\cite[Theorem~3.1]{Nei}).

    By using~\cite{Matveev88,Piergallini}, if two \patts with the same labyrinthine net are simple, then the patterns can be transformed to each other by a (possibly infinite) sequence of 0-2 moves, 2-0 moves, 2-3 moves, 3-2 moves, and isotopies.
\end{rem}


The following two propositions state a relationship between (framed) \patts and handlebody decompositions of $\Torus$.

\begin{prop}\label{prop:decomp2pattern}
    Let $(H_1, H_2, \ldots, H_n; P)$ be a handlebody decomposition of $T^3$,
    and $\wtP$ the preimage of $P$ under the universal covering map $\pi$ of $T^3$.
    Suppose that, for each $i$, the induced homomorphism $(\iota_i)_\ast: \fund(H_i) \to \fund(T^3)$ is not trivial, where $\iota_i$ is the inclusion map.
    Then the pair $(\wtP, \pi)$ is a framed \patt.
    Furthermore, if $P$ is simple (resp. proper), then the \patt is also simple (resp. proper).
\end{prop}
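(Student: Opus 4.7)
The plan is to construct a candidate labyrinthine net $\wt{X}$ from spines of the handlebodies $H_i$, verify the three axioms of Definition~\ref{dfn:pattern}, and then transfer simplicity and properness through the fact that $\pi$ is a local homeomorphism. First I would fix, for each $i$, a spine $X_i \subset H_i$---a graph onto which $H_i$ strongly deformation retracts, which exists because $H_i$ is the interior of a handlebody. Set $X = \bigsqcup_i X_i$ and $\wt{X} = \pi^{-1}(X)$. Since $\Torus \setminus P = \bigsqcup_i H_i$ is a disjoint union of open sets, piecing together the strong deformation retractions of each $H_i$ onto $X_i$ gives a strong deformation retraction of $\Torus \setminus P$ onto $X$, which lifts via the homotopy lifting property to a strong deformation retraction of $\R^3 \setminus \wtP$ onto $\wt{X}$. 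Invariance of $\wtP$ and $\wt{X}$ under every covering transformation is automatic from $\wtP = \pi^{-1}(P)$ and $\wt{X} = \pi^{-1}(X)$, so conditions~(1) and~(3) of Definition~\ref{dfn:pattern} reduce to showing $\wt{X}$ is a net and $\wtP$ is a non-compact connected $2$-polyhedron.

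Non-compactness of $\wtP$ follows from its non-emptiness and $\mathbb{Z}^3$-invariance. For connectedness I would show $\pi_1(P) \to \pi_1(\Torus)$ is surjective by Van Kampen applied to the decomposition of $\Torus$ along open thickenings of $P$ and the $H_i$, combined with the classical surjection $\pi_1(\partial W_i) \to \pi_1(W_i)$ for a handlebody, which lets every loop in $W_i$ be homotoped into $\partial W_i$ and hence into $P$. The main step is unboundedness: for each $i$, every component $C$ of $\pi^{-1}(H_i)$ is stabilized, inside the deck group $\pi_1(\Torus) = \mathbb{Z}^3$, by the image of $(\iota_i)_\ast \colon \pi_1(H_i) \to \pi_1(\Torus)$. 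By hypothesis this image is nontrivial, and since $\mathbb{Z}^3$ is torsion-free it contains a nonzero translation, under whose iterates $C$ is invariant; hence $C$ is unbounded in $\R^3$. The same argument applied to $\pi^{-1}(X_i)$ shows every component of $\wt{X}$ is unbounded, so $\wt{X}$ is a net and every labyrinthine domain of $\wtP$ is unbounded, verifying condition~(2).

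Both the simple and proper transfers use only that $\pi$ is a local homeomorphism. For simplicity, the link of any $\wt{x} \in \wtP$ is homeomorphic to the link of $\pi(\wt{x}) \in P$, so $P$ simple yields $\wtP$ simple. For properness I would first reformulate ``a simple closed curve intersecting a sector transversely once'' as one-sidedness of that sector in the complement of the singular graph; two-sidedness is a property of the normal line bundle, which is local and therefore pulls back under $\pi$, so every sector of $\wtP$ inherits two-sidedness from its image sector of $P$. The main obstacle in the whole argument is the unboundedness step, and the hypothesis $(\iota_i)_\ast \neq 0$ is tailored precisely for it: if some $(\iota_i)_\ast$ were trivial, the $\mathbb{Z}^3$-stabilizer of a component of $\pi^{-1}(H_i)$ would be trivial, removing the translation needed to force unboundedness; in the extreme case where $H_i$ is a ball, the lift would simply be a bounded $3$-ball in $\R^3$.
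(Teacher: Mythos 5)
Most of your argument is sound and follows the same route as the paper: spines $X_i$ of the $H_i$, the observation that the stabilizer of a component of $\pi^{-1}(H_i)$ in the deck group is the image of $(\iota_i)_\ast$, so a nontrivial image supplies a translation forcing unboundedness, and the lifted strong deformation retraction. Your connectedness step (surjectivity of $\pi_1(P)\to\pi_1(T^3)$ by pushing loops off the codimension-two spines) is a point the paper leaves implicit, and it is correct. The transfer of simplicity via links is also fine.

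The properness step, however, rests on a false reformulation. Being two-sided is \emph{not} equivalent to the absence of a simple closed curve meeting a sector transversely once: every sector that is an open disk is automatically two-sided (its normal bundle is trivial), yet the type-$(0)$ decomposition of $S^3$ coming from a special spine such as Bing's house is explicitly non-proper in the sense of Definition~\ref{dfn:handlebody_decomposition}, and all of its sectors are two-sided disks. Properness is a global (homological/dual) condition on how loops in $M\setminus B$ link the sectors, not a local condition on the normal line bundle, so it does not ``pull back'' the way you claim; under your reformulation every decomposition of an orientable manifold would be proper, which defeats the definition. The correct argument goes in the contrapositive direction and is simpler: if $\wtP$ were not proper, a simple closed curve $\wt{c}\subset\R^3$ avoiding the singular graph and meeting $\wtP=\pi^{-1}(P)$ transversely in a single point of a sector would project to a loop in $T^3$ meeting $P$ in exactly one point of a sector (since $\wt{c}\cap\pi^{-1}(P)$ controls $\pi(\wt{c})\cap P$); after a small perturbation to make it simple, this contradicts properness of $P$. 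With that replacement the proof is complete.
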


\begin{proof}
    Let $\{ V^i_j \}$ be the connected components of the preimage of $H_i$ under $\pi$.
    Since the homomorphism $(\iota_i)_\ast$ is not trivial, each open component $V^i_j$ is unbounded.
    Each open handlebody $H_i$ contains a simple finite graph $X_i$ that is a strong deformation retract of $H_i$.
    Then the preimage, $\widetilde{X}_i$, of $X_i$ under $\pi$ is a net.
    Furthermore, each connected component of $\widetilde{X}_i$ is a strong deformation retract of some $V^i_j$.
    Hence, $(\wtP, \pi)$ is a framed \patt.

    Since $\pi$ is a local homeomorphism, if $P$ is simple, then the preimage $\wtP$ is also simple.
    Next, assume that the handlebody decomposition $(H_1, H_2, \ldots, H_n; P)$ of $T^3$ is proper,
    whereas the \patts $\wtP$ is \emph{not} proper.
    Then, there exists a simple loop $\widetilde{c}$ in $\mathbb{R}^3$ that transversely intersects $P$ at a single point only in a sector.
    Thus, there exists a simple loop in $\Torus$ isotopic to $\pi(\widetilde{c})$ that intersects a sector of $P$ transversely once.
    Hence the handlebody decomposition is not proper, which is a contradiction.
    Therefore the \patt $\wtP$ is proper.
\end{proof}

\begin{prop}\label{prop:pattern2decomp}
    Let $(\wtP, \pi)$ be a framed \patt.
    \begin{enumerate}[label=\textup{(\arabic*)},leftmargin=*]
        \item The image $\pi(\wtP)$ gives a handlebody decomposition of $T^3$.
            If $\wtP$ is simple, then the handlebody decomposition is also simple.
        \item 
            Let $\{V_i\}_{i \in I}$ be the set of labyrinthine domains of $\wtP$, where $I$ is a finite or countable set.
            Suppose that $\wtP$ is proper.
            Suppose further that for any $V_i$, $V_j$ with $V_i \neq V_j$, where $V_i$ is the image of $V_j$ under some covering transformation, $V_i$ and $V_j$ are not adjacent to the same sector.
            Then the handlebody decomposition given by $\pi(\wtP)$ is proper.
    \end{enumerate}
\end{prop}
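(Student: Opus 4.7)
The plan is to establish parts (1) and (2) in turn. For part (1), set $P := \pi(\wtP)$. Since $\wtP$ is invariant under the deck transformation group $G$ of $\pi$ by condition~(1) of Definition~\ref{dfn:pattern}, the image $P$ is a compact connected $2$-dimensional polyhedron in $\Torus$, and since $\pi$ is a local homeomorphism, $P$ is simple whenever $\wtP$ is. Compactness of $\Torus$ and $P$ ensures that $\Torus \setminus P$ has only finitely many connected components $U_1, \ldots, U_n$. For each $U_k$, pick a labyrinthine domain $V_{i_k} \subset \pi^{-1}(U_k)$ and let $G_k \leq G$ be its setwise stabilizer, so that $\pi|_{V_{i_k}} : V_{i_k} \to U_k$ is a regular covering with deck group $G_k$. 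After upgrading the strong deformation retraction of $\R^3 \setminus \wtP$ onto $\wt{X}$ in condition~(3) of Definition~\ref{dfn:pattern} to a $G$-equivariant one and descending to the quotient, one obtains a strong deformation retraction of $U_k$ onto the finite graph $X_k := \pi(\wt{X} \cap V_{i_k})$. Since $\Torus$ is irreducible and $\overline{U_k}$ is a compact orientable $3$-manifold homotopy equivalent to a finite graph, standard $3$-manifold theory identifies $U_k$ as the interior of a handlebody.

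For part (2), assume $\wtP$ is proper and satisfies the stated adjacency hypothesis. Suppose for contradiction that the decomposition induced by $P$ is not proper: there is a simple loop $c \subset \Torus \setminus B$ (with $B$ the singular graph of $P$) meeting some sector $S$ of $P$ transversely at a single point. Lift $c$ to a path $\tilde c$ in $\R^3$ running from $\tilde x_0$ to $g(\tilde x_0)$ for some $g \in G$; the lift avoids $\pi^{-1}(B)$ and meets $\wtP$ transversely at exactly one point, which lies in some component $\tilde S$ of $\pi^{-1}(S)$. Since $\pi$ is a local homeomorphism, $\tilde S$ is itself a sector of $\wtP$. Let $V_i$ and $V_j$ be the labyrinthine domains containing $\tilde x_0$ and $g(\tilde x_0)$ respectively, so that $V_j = g(V_i)$ and $V_i, V_j$ are adjacent to $\tilde S$ from opposite sides. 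If $V_i = V_j$, then concatenating $\tilde c$ with a path inside $V_i$ from $g(\tilde x_0)$ back to $\tilde x_0$ produces a loop in $\R^3$ meeting $\wtP$ transversely once in $\tilde S$, contradicting properness of $\wtP$. If $V_i \neq V_j$, then $V_i$ and $V_j = g(V_i)$ are distinct labyrinthine domains both adjacent to the sector $\tilde S$, directly contradicting the adjacency hypothesis.

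The principal obstacle will be the handlebody identification in part (1): specifically, arranging the strong deformation retraction of $\R^3 \setminus \wtP$ onto $\wt{X}$ to be $G$-equivariant so that it descends to each $U_k$. If this refinement turns out to be awkward, an alternative is to bypass equivariance by exploiting the covering $\pi|_{V_{i_k}} : V_{i_k} \to U_k$ together with irreducibility of $\Torus$ and the classical characterization of compact orientable irreducible $3$-manifolds with free fundamental group and nonempty boundary as handlebodies. Part (2), by contrast, is essentially formal once one identifies the two sides of $\tilde S$ with labyrinthine domains related by the covering transformation $g$.
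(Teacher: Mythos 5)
Your proposal is correct and follows essentially the same route as the paper: part (2) is argued identically (lift the offending loop, rule out the closed/same-domain case by properness of $\wtP$, and use the adjacency hypothesis for the case of distinct domains related by a deck transformation), and for part (1) the paper takes exactly your stated fallback, deducing that $\pi_1(H_j)$ is free from the covering $\pi|_{V_i}\colon V_i \to H_j$ and the retraction of $V_i$ onto its labyrinthine net, rather than making the deformation retraction equivariant. So no gap; the only difference is that the paper sidesteps the equivariance issue you flagged by comparing fundamental groups through the two compatible coverings.
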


\begin{proof}
    Let $\Gamma$ be the covering transformation group of $\pi$.
    Set $P = \pi(\wtP)$.

    (1)
    Since $\wtP$ is a connected $2$-dimensional polyhedron,
    its projection image $P$ is also a connected $2$-dimensional polyhedron.
    Furthermore, if $\wtP$ is simple, then $P$ is also simple.

    The complement $T^3 \setminus P$ consists of finite open components $\{ H_j \}$ because $T^3 = \mathbb{R}^3 / \Gamma$ is compact and $P$ is the underlying space of a locally finite complex.
    We show that each open component $H_j$ is an open handlebody.
    There exists a labyrinthine domain $V_i$ such that $H_j = \pi(V_i)$.
    Furthermore, since $\wtP$ is a net-like pattern, there exists a labyrinthine net $\wt{X}_i$ such that $\wt{X}_i$ is a strong deformation retract of $V_i$.
    Put $G = \pi(\wt{X}_i)$.
    Then, $G$ is an embedding of a graph in $\Torus$.
    So, the fundamental group $\fund(G)$ is free.
    Since $\pi |_{V_i}$ is a covering map, and $\wt{X}_i$ is a strong deformation retract of $V_i$,
    the inclusion map $G \to H_j$ induces an isomorphism from $\fund(G)$ to $\fund(H_j)$.
    Hence, $H_j$ is the interior of a handlebody because $\fund(H_j)$ is free.
    Therefore, $P$ gives a handlebody decomposition of $T^3$.

    (2)
    We suppose that $\wt{P}$ is proper and $P$ is not proper.
    Then there exists a simple loop $c: [0, 1] \to T^3$ such that it transversely intersects $P$ at a single point only in a sector.
    Let $\widetilde{c}$ be a lift of $c$. 
    Since $\wt{P}$ is proper, $\wt{c}$ is an arc (not a loop) whose initial point $v$ and terminal point $w$ are contained in different labyrinthine domains $V_i$ and $V_j$, respectively, of $\mathbb{R}^3 \setminus \wtP$.
    Note that $V_i$ and $V_j$ are adjacent.
    This is impossible because there exists a covering transformation that takes $v$ to $w$, so $V_i$ to $V_j$.
\end{proof}

\begin{ex}\label{ex:honeycomb_pattern}
    Figure~\ref{fig:colored_patterns}(a) illustrates a simple proper \patt that comes from the hexagonal honeycomb tessellation of $\R^2$.
    A yellow polygon illustrates a fundamental domain of its frame.
    We call the pattern the {\em hexagonal honeycomb pattern}.
    By Proposition~\ref{prop:pattern2decomp}, the hexagonal honeycomb pattern with the frame induces the hexagonal honeycomb decomposition (see Figure~\ref{fig:nxx}(d)).
    Note that a tessellation of $\R^2$ induces a \patt in general.
    The meanings of colors except yellow in Figure~\ref{fig:colored_patterns}(a) will be explained in Definition~\ref{dfn:coloring}.
\end{ex}

\begin{figure}[htbp]
    \centering
    \begin{minipage}[b]{0.48\textwidth}
        \centering
    \includegraphics[width=0.8\textwidth]{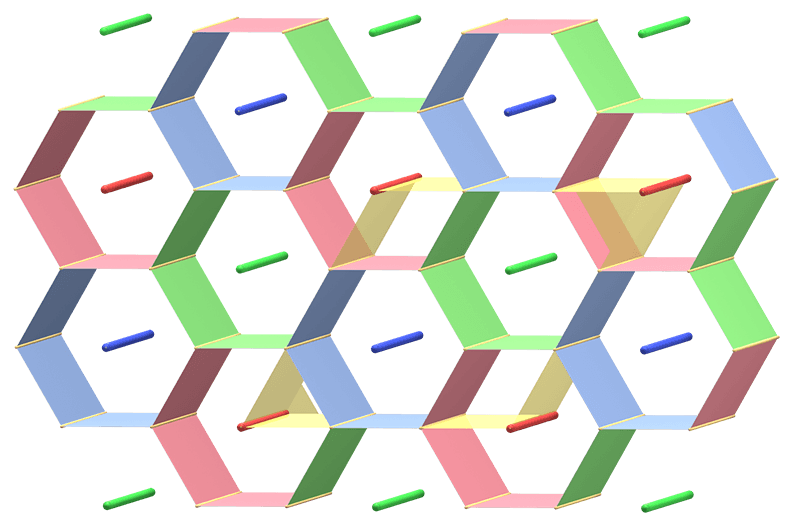}\\
        (a)
    \end{minipage}
    \begin{minipage}[b]{0.48\textwidth}
        \centering
        \includegraphics[width=0.8\textwidth]{./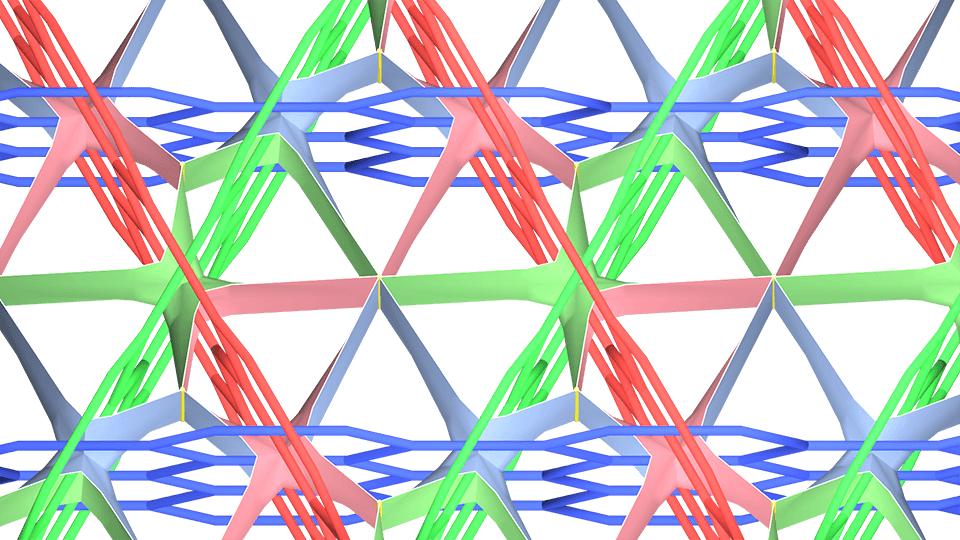}\\
        (b)
    \end{minipage}
    \caption{(a) The hexagonal honeycomb pattern. (b) A non-simple \patt.}
    \label{fig:colored_patterns}
\end{figure}

We now suggest a strict mathematical definition of polycontinuous patterns.

\begin{dfn}\label{dfn:polyconti}
    We say that a \patt $\wtP$ is an \emph{$n$-continuous pattern} (or a \emph{polycontinuous pattern}) if the following conditions hold:
    \begin{enumerate}
        \item $\wtP$ has precisely $n$ labyrinthine domains.
        \item $\wtP$ is proper.
        \item Any sector of $\wtP$ is not a disk.
    \end{enumerate}
\end{dfn}

Note that for any positive integer $n$, there exists an $n$-continuous pattern.
In the remainder, we call a $2$-continuous (resp. $3$-continuous) pattern a \emph{bicontinuous} (resp. \emph{tricontinuous}) pattern, according to the conventions of soft materials~\cite{Hyde1, Sch}.

The following corollary is a polycontinuous pattern version of Proposition~\ref{prop:decomp2pattern}.
\begin{cor}\label{cor:rel_decomp2poly}
    Let $(H_1, \ldots, H_n; P)$ be a proper handlebody decomposition of $\Torus$, and $\wtP$ the preimage of $P$ under the universal covering map of $\Torus$.
    Suppose that the following two conditions hold:
    \begin{enumerate}[label=\textup{(\arabic*)}]
        \item For each $i$, we have $(\iota_i)_\ast (\fund(H_i)) \cong Z \oplus Z \oplus Z$.
        \item Any sector of $P$ is not a disk.
    \end{enumerate}
    Then, $\wtP$ is a polycontinuous pattern.
    In particular, if $(\iota_i)_\ast (\fund(H_i)) = \fund(\Torus)$ for each $i$, then $\wtP$ is an $n$-continuous pattern.
    Furthermore, if $P$ is simple (resp. proper), then $\wtP$ is also simple (resp. proper).
\end{cor}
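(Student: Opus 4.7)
The plan is to reduce the corollary to Proposition~\ref{prop:decomp2pattern} plus a short covering-space argument for the two extra conditions distinguishing a polycontinuous pattern from a net-like pattern.

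First, condition (1) of the corollary forces each $(\iota_i)_*(\pi_1(H_i)) \cong \mathbb{Z}^3$ to be nontrivial, so Proposition~\ref{prop:decomp2pattern} applies and immediately gives that $(\widetilde{P}, \pi)$ is a framed net-like pattern. The last sentence of that proposition also transfers simpleness and properness of $P$ to $\widetilde{P}$, which takes care of the closing clause of the corollary as well as condition (2) of Definition~\ref{dfn:polyconti}.

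Next, I would check that no sector of $\widetilde{P}$ is a disk. Every sector $\widetilde{F}$ of $\widetilde{P}$ projects via $\pi$ onto some sector $F$ of $P$, and the restriction $\pi|_{\widetilde{F}} \colon \widetilde{F} \to F$ is a connected covering map. Since $T^3$ is compact, the closure $\bar{F}$ is a compact $2$-manifold with boundary contained in the singular graph of $P$; hypothesis (2) then gives $\chi(\bar{F}) \leq 0$. If the covering $\pi|_{\widetilde{F}}$ has finite degree $d$, then $\bar{\widetilde{F}}$ is a compact surface of Euler characteristic $d\,\chi(\bar{F}) \leq 0$, which cannot be a disk; otherwise $\bar{\widetilde{F}}$ is noncompact and again is not a disk. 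This establishes condition (3) of Definition~\ref{dfn:polyconti}.

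Finally, the number of components of $\pi^{-1}(H_i)$ equals the index $[\pi_1(T^3) : (\iota_i)_*(\pi_1(H_i))]$, which under hypothesis (1) is a finite positive integer (so the total labyrinthine-domain count is finite, giving the polycontinuous conclusion) and under the stronger assumption $(\iota_i)_*(\pi_1(H_i)) = \pi_1(T^3)$ equals $1$. In the latter case, $\mathbb{R}^3 \setminus \widetilde{P} = \bigsqcup_{i=1}^n \pi^{-1}(H_i)$ has exactly $n$ connected components, so $\widetilde{P}$ is $n$-continuous. The main delicate point is the sector-disk argument, where one must be careful to interpret ``disk'' as a compact $2$-cell and to handle separately the finite- and infinite-degree covers; the rest consists of routine applications of Proposition~\ref{prop:decomp2pattern} and the subgroup-index formula for coverings.
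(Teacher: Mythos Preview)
Your proposal is correct and follows exactly the route the paper intends: the corollary is stated without proof immediately after Proposition~\ref{prop:decomp2pattern}, with the remark that it is ``a polycontinuous pattern version'' of that proposition, so the reader is expected to supply precisely the verifications you give (nontriviality of $(\iota_i)_*$, transfer of properness/simpleness, no disk sectors via the covering of sectors, and the index count for labyrinthine domains). Your treatment is in fact more explicit than the paper's; the only place to tighten slightly is the sector argument, where the closure $\bar F$ need not literally be a manifold with boundary, but passing to the abstract compact surface $\hat F$ mapping onto $\bar F$ (a standard device for sectors of simple polyhedra) makes your Euler-characteristic inequality rigorous.
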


\subsection{Colorings of patterns}

In this subsection, we will define colorings of \patts.
Each labyrinthine domain of a \patt is a mathematical model of polymers assembled in one kind of block.
In materials science, one kind of block may form many domains of a \patt in general.
To describe such a situation, we introduce ``colors'' of \patts, of which each color corresponds to one kind of block of polymers.

\begin{dfn}\label{dfn:coloring}
    Let $\wtP$ be a \patt.
    Set $X_n = \{ 1, 2, \ldots, n \}$.
    Let $\widetilde{\mathcal{V}}$ denote the set of all labyrinthine domains of $\wtP$.
    A surjection $\varphi: \widetilde{\mathcal{V}} \to X_n$ is called an \emph{$n$-coloring of $\wtP$} if there exists a frame $\pi$ of $\wtP$ such that the following conditions hold:
    \begin{enumerate}[label=(\arabic*)]
        \item For each covering transformation $t$ of $\pi$ and for any $V \in\widetilde{\mathcal{V}}$, we have $\varphi(V) = \varphi \circ t (V)$. 
        \item Two sides of a local part of each sector have different colors.
            Namely, for each point $p$ in a sector $C$, 
            the two labyrinthine domains, $V$ and $V\upr$, that have non-trivial intersection with $N(p)$ satisfy $\varphi(V) \neq \varphi(V\upr)$.
    \end{enumerate}
    The image $\varphi(V)$ is called \emph{the color of $V$}.
    The frame $\pi$ of $\wtP$ is said to be \emph{compatible with the coloring $\varphi$}.
    A \patt together with a fixed ($n$-)coloring is called an \emph{($n$-)colored} \patt (see Figure~\ref{fig:colored_patterns}(b)).
    We say that two colored \patts, $\wtP$ and $\wt{Q}$, are \emph{equivalent} if there exists an ambient isotopy of $\Rth$ that moves $\wtP$ to $\wt{Q}$, and each pair of corresponding labyrinthine domains has the same color after permuting the colors.
    If a surjection $\varphi$ satisfies only the condition (1), 
    then we call $\varphi$ a \emph{non-effective $n$-coloring}, and $\wtP$ is said to be \emph{non-effectively $n$-colored}.

    Let $\wtP$ be a (possibly non-effectively) $n$-colored \patt with a coloring $\varphi: \widetilde{\mathcal{V}} \to X_n$, where $X_n = \{ 1, 2, \ldots, n \}$, and $\pi$ a frame of $\wtP$ compatible with $\varphi$.
    By Proposition~\ref{prop:pattern2decomp}, the image $\pi(\wtP)$ gives a handlebody decomposition of $\Torus$.
    Denote by $\mathcal{V}$ the set of all handlebodies of the decomposition.
    Then we say that $(\wtP, \pi)$ is \emph{of type $(\mathfrak{g}_1, \ldots, \mathfrak{g}_n)$}, where $\mathfrak{g}_i = [g_{i_1}, \ldots, g_{i_k}]$ is a sequence of the genera of the handlebodies in $\mathcal{V}$ colored by $i \in X_n$.
    (For simplicity, if the length of $\mathfrak{g}_i$ is equal to $1$, then we put $\mathfrak{g}_i = g_{i_1}$.)
\end{dfn}

Note that, as opposed to colorings of graphs on surfaces, for any integers $m$, $n$ with $n \geq 2$ and $m < n$, there is an $n$-colored framed \patt that does not admit $m$-coloring.

\begin{rem}\label{rem:effective}
    If a \patt admits a coloring, then it is necessarily proper.
\end{rem}

In fact, a colored \patt $\wtP$ with its frame $\pi$ compatible with the coloring satisfies the condition that $\pi(\wtP)$ induces a proper handlebody decomposition of $\Torus$ since any two labyrinthine domains sharing a sector have different colors (see Proposition~\ref{prop:pattern2decomp}).
Hence, the following holds.

\begin{cor}\label{cor:effective_pattern2decomp}
    Let $(\wtP, \pi)$ be a framed \patt and let $\mathfrak{g}_i = [g\ssn{i}_{1}, \ldots, g\ssn{i}_{k_i}]$ be a sequence of positive integers for $i \in \{ 1, \ldots, n \}$.
    Suppose that $(\wtP, \pi)$ admits an $n$-coloring and is of type $(\mathfrak{g}_1, \ldots, \mathfrak{g}_n)$.
    Then $\pi(\wtP)$ gives a proper type-$(g\ssn{1}_{1}, \ldots, g\ssn{1}_{k_1}, \ldots, g\ssn{n}_{1}, \ldots, g\ssn{n}_{k_n})$ handlebody decomposition $(H\ssn{1}_{1}, \ldots, H\ssn{1}_{k_1}, \ldots, H\ssn{n}_1, \ldots, H\ssn{n}_{k_n})$ such that $H\ssn{i}_{j_1} \cap H\ssn{i}_{j_2} = \emptyset$ for $j_1, j_2 \in \{ 1, \ldots, k_i \}$.
\end{cor}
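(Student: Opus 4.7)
The plan is to assemble the corollary directly from Proposition~\ref{prop:pattern2decomp} together with the two defining conditions of Definition~\ref{dfn:coloring}; essentially all the heavy lifting has already been done, and the remainder is bookkeeping. First, applying Proposition~\ref{prop:pattern2decomp}(1) to $(\wtP, \pi)$ yields a handlebody decomposition of $T^3$ whose handlebodies are the connected components of $T^3 \setminus P$ for $P := \pi(\wtP)$. By the definition of the type of a colored framed pattern, the handlebodies of color $i \in \{1, \ldots, n\}$ have genera precisely $g\ssn{i}_1, \ldots, g\ssn{i}_{k_i}$, so labelling them $H\ssn{i}_1, \ldots, H\ssn{i}_{k_i}$ immediately produces a decomposition of the claimed type.

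To verify properness I would check the hypothesis of Proposition~\ref{prop:pattern2decomp}(2). Let $V \ne V'$ be distinct labyrinthine domains of $\wtP$ with $V' = t(V)$ for some covering transformation $t$ of $\pi$. Condition (1) of Definition~\ref{dfn:coloring} gives $\varphi(V) = \varphi(V')$, so if $V$ and $V'$ were both adjacent to a common sector $C$, the two local sides of $C$ at a regular interior point would be $V$ and $V'$ and hence carry the same color, contradicting condition (2). Thus the hypothesis of Proposition~\ref{prop:pattern2decomp}(2) holds, and the induced handlebody decomposition is proper. This is the argument sketched in Remark~\ref{rem:effective}, and it is the cleanest way to discharge the properness obligation.

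Finally, for the disjointness assertion $H\ssn{i}_{j_1} \cap H\ssn{i}_{j_2} = \emptyset$ with $j_1 \ne j_2$, I would argue by contradiction: if a sector of $P$ lay on the common boundary of these two handlebodies, lifting would produce distinct labyrinthine domains $V_1, V_2$ of $\wtP$ that meet a common sector $\tilde{C}$ and project respectively to $H\ssn{i}_{j_1}$ and $H\ssn{i}_{j_2}$. By covering-invariance of $\varphi$, both $V_1$ and $V_2$ carry color $i$, again contradicting condition (2). I do not anticipate any real obstacle; the only subtlety worth spelling out is that, because $\pi$ is a local homeomorphism, adjacency of labyrinthine domains to a sector upstairs corresponds exactly to adjacency of handlebodies to the image sector downstairs, which is what makes the lifting arguments in both the second and third paragraphs legitimate.
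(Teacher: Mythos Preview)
Your proposal is correct and follows essentially the same route as the paper, which does not give a separate proof but only the one-sentence lead-in before the corollary: coloring condition~(2) forces any two labyrinthine domains adjacent to a common sector to have distinct colors, so the hypotheses of Proposition~\ref{prop:pattern2decomp}(2) are met and the induced decomposition is proper; the disjointness of same-colored handlebodies is the same observation read downstairs. The only point worth making explicit is that Proposition~\ref{prop:pattern2decomp}(2) also requires $\wtP$ itself to be proper, but this is exactly Remark~\ref{rem:effective} (a sector with both local sides in the same labyrinthine domain would violate condition~(2)), and you already cite it.
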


The converse of the above corollary is clear by Proposition~\ref{prop:decomp2pattern}.

\begin{cor}\label{cor:decomp2effective_pattern}
    Let $(H\ssn{1}_{1}, \ldots, H\ssn{1}_{k_1}, \ldots, H\ssn{n}_{1}, \ldots, H\ssn{n}_{k_n}; P)$ be a proper type-$(g\ssn{1}_{1},$ $\ldots,$ $g\ssn{1}_{k_1},$ $\ldots,$ $g\ssn{n}_{1},$ $\ldots,$ $g\ssn{n}_{k_n})$ handlebody decomposition of $\Torus$ and let $\pi$ be the universal covering map $\Rth \to \Torus$.
    We assume that $H\ssn{i}_{j_1} \cap H\ssn{i}_{j_2} = \emptyset$ for $j_1, j_2 \in \{ 1, \ldots, k_i \}$.
    We further assume that, for each handlebody $H\ssn{i}_{j}$, the induced homomorphism $(\iota\ssn{i}_j)_\ast : \pi_1(H\ssn{i}_j) \to \pi_1(\Torus)$ is not trivial, where $\iota\ssn{i}_j$ is the inclusion map.
    Then $(\pi^{-1}(P), \pi)$ is a colored \patt of type $(\mathfrak{g}_1, \ldots, \mathfrak{g}_n)$, where $\mathfrak{g}_i = [g\ssn{i}_{1}, \ldots, g\ssn{i}_{k_i}]$.
\end{cor}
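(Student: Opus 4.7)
The plan is to verify the hypotheses of Proposition~\ref{prop:decomp2pattern} first, then construct the coloring explicitly from the given partition into color classes $H^{(i)}_1, \ldots, H^{(i)}_{k_i}$, and finally check that this assignment satisfies conditions (1) and (2) of Definition~\ref{dfn:coloring}. The type of the resulting colored \patt will then read off tautologically from the genera of the handlebodies in each color class.

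First I would apply Proposition~\ref{prop:decomp2pattern}. The hypothesis that $(\iota^{(i)}_j)_\ast:\pi_1(H^{(i)}_j)\to\pi_1(\Torus)$ is non-trivial for every $(i,j)$ is exactly what is needed, so $(\pi^{-1}(P),\pi)$ is a framed \patt; since the decomposition is proper (because the partition is simple and comes with the subscript-disjointness hypothesis only implicitly through properness), the \patt is proper as well. Let $\widetilde{\mathcal V}$ denote the set of labyrinthine domains of $\widetilde P:=\pi^{-1}(P)$. Every $V\in\widetilde{\mathcal V}$ is a connected component of $\pi^{-1}(H^{(i)}_j)$ for a unique pair $(i,j)$; define
\[
\varphi:\widetilde{\mathcal V}\longrightarrow X_n=\{1,\ldots,n\},\qquad \varphi(V)=i.
\]
This is surjective since every color $i$ is used by at least one handlebody $H^{(i)}_j$, which is itself in the image of $\pi$ and so has non-empty preimage.

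Next I would check the two defining properties of a coloring. For condition (1), any covering transformation $t$ of $\pi$ satisfies $\pi\circ t=\pi$, so $t$ permutes the connected components of $\pi^{-1}(H^{(i)}_j)$ within a fixed pair $(i,j)$; consequently $\varphi\circ t=\varphi$. For condition (2), let $p$ lie in a sector $C$ of $\widetilde P$, and let $V,V'\in\widetilde{\mathcal V}$ be the two labyrinthine domains meeting any regular neighbourhood $N(p)$. Their images $\pi(V)=H^{(i)}_{j}$ and $\pi(V')=H^{(i')}_{j'}$ are the two handlebodies adjacent to the sector $\pi(C)$ of $P$. If $(i,j)=(i',j')$, a small loop through $p$ would descend to a simple closed curve in $\Torus$ crossing $\pi(C)$ transversely once and staying off the singular graph, contradicting properness of the handlebody decomposition. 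Hence $(i,j)\ne(i',j')$. If moreover $i=i'$ but $j\ne j'$, then the sector $\pi(C)$ is shared by $H^{(i)}_{j}$ and $H^{(i)}_{j'}$, contradicting the hypothesis $H^{(i)}_{j_1}\cap H^{(i)}_{j_2}=\emptyset$. Therefore $\varphi(V)=i\ne i'=\varphi(V')$, as required.

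Finally, to identify the type, note that for each color $i$ the labyrinthine domains with $\varphi(V)=i$ project onto exactly the handlebodies $H^{(i)}_1,\ldots,H^{(i)}_{k_i}$, which have genera $g^{(i)}_1,\ldots,g^{(i)}_{k_i}$, so the type is $(\mathfrak g_1,\ldots,\mathfrak g_n)$ with $\mathfrak g_i=[g^{(i)}_1,\ldots,g^{(i)}_{k_i}]$, matching the definition in Definition~\ref{dfn:coloring}. The main delicate point I anticipate is condition (2): one has to separate the two obstructions (two domains over the \emph{same} handlebody, handled by properness of the decomposition; versus two domains over \emph{distinct} handlebodies of the same color, handled by the hypothesis $H^{(i)}_{j_1}\cap H^{(i)}_{j_2}=\emptyset$), and to argue that a sector of $\widetilde P$ really does descend to a sector of $P$ that is adjacent to the corresponding projected handlebodies, which uses the local homeomorphism property of $\pi$.
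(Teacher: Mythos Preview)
Your proof is correct and follows exactly the approach the paper has in mind: the paper dispatches this corollary in one sentence (``The converse of the above corollary is clear by Proposition~\ref{prop:decomp2pattern}''), and your argument is a faithful unpacking of that sentence---apply Proposition~\ref{prop:decomp2pattern} to get a proper framed \patt, color each labyrinthine domain by the superscript of the handlebody it projects to, and verify Definition~\ref{dfn:coloring}. Two minor remarks: the parenthetical ``because the partition is simple and comes with the subscript-disjointness hypothesis only implicitly through properness'' is unnecessary and slightly off (properness is an explicit hypothesis, and simplicity is not assumed here); and in your treatment of condition~(2), the ``small loop through $p$'' is really constructed downstairs in $\Torus$ (an arc through $\pi(p)$ closed up inside the single handlebody), not upstairs---alternatively you could just cite Remark~\ref{rem:cl_Hdecomp}(1), which already records that properness forces the two sides of a sector of $P$ to lie in distinct handlebodies.
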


\subsection{A sufficient condition for the equivalence of patterns}

Corollaries~\ref{cor:effective_pattern2decomp} and~\ref{cor:decomp2effective_pattern} say there is a nice relationship between colored \patts and proper handlebody decompositions.
This subsection gives a sufficient condition for two colored \patts to be equivalent.

To this end, we first consider adjusting a framed \patt to another frame.
Let $(\wtP, \pi)$ be a framed \patt, and let $\rho$ be a covering map $\Rth \to \Torus$.
Since the two covering maps are equivalent, there exists a self-homeomorphism $f$ of $\Rth$ such that $\pi = \rho \circ f$.
If $f$ is orientation-preserving, we say that $\pi$ and $\rho$ \emph{have the same orientation}.
Otherwise, we say that $\pi$ and $\rho$ \emph{have different orientations}.

If $\pi$ and $\rho$ have the same orientation, then $\wtP$ is isotopic to $f(\wtP)$, and $(f(\wtP), \rho)$ is a framed \patt.
Consider the case that $\pi$ and $\rho$ have different orientations.
Let $r$ be an orientation-reversing self-homeomorphism of $\Torus$.
Then, $\pi\upr := r \circ \pi$ is also a covering map $\Rth \to \Torus$.
Hence, there exists an orientation-preserving homeomorphism $g: \Rth \to \Rth$ such that $\pi\upr = \rho \circ g$.
So, $\wtP$ is isotopic to $g(\wtP)$, and we have $r(\pi(\wtP)) = \rho(g(\wtP))$.
Furthermore, $(g(\wtP), \rho)$ is a framed \patt because each covering transformation of $\pi\upr$ is also that of $\pi$.

To summarize, we obtain the following lemma.

\begin{lem}\label{lem:adjast_frame}
    Let $(\wtP, \pi)$ be a framed \patt, and let $\rho$ be a covering map $\Rth \to \Torus$.
    Then, there exists a \patt $\wt{Q}$ such that the following three conditions hold:
    \begin{enumerate}[label=\textup{(\arabic*)}]
        \item The covering map $\rho$ is a frame of $\wt{Q}$.
        \item The pattern $\wtP$ is isotopic to $\wt{Q}$.
        \item Either $\pi(\wtP) = \rho(\wt{Q})$ or there exists an orientation-reversing self-homeomorphism $r$ of $\Torus$ with $r(\pi(\wtP)) = \rho(\wt{Q})$.
    \end{enumerate}
    In particular, if $\wtP$ is (non-effectively) $n$-colored, then so is $\wt{Q}$.
    Furthermore, $\wtP$ and $\wt{Q}$ have the same type.
\end{lem}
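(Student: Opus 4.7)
The discussion immediately preceding the lemma already sketches the construction, so my plan is to organize it into a formal proof. Both $\pi$ and $\rho$ are universal covering maps of $\Torus$, hence equivalent as covers, so there exists a self-homeomorphism $h$ of $\Rth$ with $\pi = \rho \circ h$. When $\pi$ and $\rho$ have the same orientation, $h$ can be taken orientation-preserving, and I would set $\wt{Q} = h(\wtP)$. When they have opposite orientations, I would first pre-compose $\pi$ with an orientation-reversing self-homeomorphism $r$ of $\Torus$, forming $\pi\upr = r \circ \pi$, so that $\pi\upr$ and $\rho$ agree in orientation; then there is an orientation-preserving $g$ with $\pi\upr = \rho \circ g$, and I would set $\wt{Q} = g(\wtP)$. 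Condition (3) of the lemma is then a direct consequence of these equalities: in the first case $\pi(\wtP) = \rho(\wt{Q})$, and in the second case $r(\pi(\wtP)) = \pi\upr(\wtP) = \rho(\wt{Q})$.

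Next I would verify that $\wt{Q}$ is a \patt with frame $\rho$. The properties of $\wt{Q}$ being a non-compact connected $2$-dimensional polyhedron whose complement is a union of unbounded open regions each strongly deformation retracting onto a graph transfer immediately, since $h$ (or $g$) is a self-homeomorphism of $\Rth$: the retract of $\Rth \setminus \wt{Q}$ is the image $h(\wt{X})$ of the net of $\wtP$. The essential check is that the covering transformations of $\rho$ preserve both $\wt{Q}$ and $h(\wt{X})$. This follows from a conjugation identity: for any covering transformation $s$ of $\rho$, the composition $h^{-1} \circ s \circ h$ satisfies $\pi \circ (h^{-1} \circ s \circ h) = \rho \circ s \circ h = \rho \circ h = \pi$, and is therefore a covering transformation of $\pi$; hence it preserves both $\wtP$ and $\wt{X}$, so $s$ preserves $h(\wtP) = \wt{Q}$ and $h(\wt{X})$. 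The same argument applies with $g$ in place of $h$, because the covering transformations of $\pi\upr = r \circ \pi$ coincide with those of $\pi$.

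Condition (2) would then follow from the fact that an orientation-preserving PL self-homeomorphism of $\Rth$ is ambient isotopic to the identity via the Alexander isotopy, yielding the required ambient isotopy from $\wtP$ to $\wt{Q}$. For the coloring and type assertions, if $\varphi$ is a (possibly non-effective) $n$-coloring of $\wtP$ compatible with $\pi$, I would define $\varphi\upr(h(V)) = \varphi(V)$ on the labyrinthine domains of $\wt{Q}$. Equivariance of $\varphi\upr$ under the covering transformations of $\rho$ follows at once from the conjugation identity, and the sector-separation condition is preserved by the homeomorphism. The type is unchanged because $h$ (resp.\ $g$) carries each handlebody of the decomposition of $\Torus$ induced by $\pi$ (resp.\ $\pi\upr$) homeomorphically to the corresponding handlebody of the decomposition induced by $\rho$.

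The main delicate point is the orientation bookkeeping in the mismatched case: if one simply chose any homeomorphism $h$ with $\pi = \rho \circ h$, it might be orientation-reversing, and condition (2) would fail. Inserting the preliminary composition with $r$ is precisely the maneuver that restores orientation-preservation, at the controlled cost of recording the discrepancy through $r$ in the second alternative of condition (3).
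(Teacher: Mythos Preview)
Your proposal is correct and follows essentially the same route as the paper: the argument is precisely the discussion the authors place immediately before the lemma, and you have organized it into a formal proof. In fact you supply details the paper leaves implicit---the conjugation identity showing that deck transformations of $\rho$ preserve $\wt{Q}$, and the explicit transfer of the coloring and type---so your write-up is, if anything, more complete than the original.
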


By Lemma~\ref{lem:adjast_frame}, we can assume that any two \patts have the same frame.
Proposition~\ref{prop:pattern2decomp} says the two framed \patts induce two handlebody decompositions of $\Torus$, respectively.
If the two decompositions are isotopic, then the two patterns are also isotopic.
In fact, we can say more as follows.

\begin{lemma}\label{lem:change_lattice}
    Let $(\wtP, \pi)$ and $(\wt{Q}, \pi)$ be framed \patts.
    Suppose that there exists an orientation-preserving self-homeomorphism $f$ of $\Torus$ that maps $\pi(\wtP)$ to $\pi(\wt{Q})$.
    Then, $\wtP$ is isotopic to $\wt{Q}$.
\end{lemma}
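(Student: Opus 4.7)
The plan is to realize the desired ambient isotopy on $\R^3$ by lifting $f$ to a self-homeomorphism $\tilde{f}$ of $\R^3$ and then straightening $\tilde{f}$ to the identity. For the lift: since $\pi \colon \R^3 \to \Torus$ is the universal covering map and $\R^3$ is simply connected, the lifting criterion applied to $f \circ \pi$ produces a homeomorphism $\tilde{f} \colon \R^3 \to \R^3$ with $\pi \circ \tilde{f} = f \circ \pi$. Because $\pi$ is a local orientation-preserving homeomorphism (with respect to compatible orientations on $\R^3$ and $\Torus$) and $f$ is orientation-preserving, $\tilde{f}$ is itself an orientation-preserving self-homeomorphism of $\R^3$.

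Next I would check that $\tilde{f}(\wtP) = \wt{Q}$. Condition~(1) of Definition~\ref{dfn:pattern} asserts that every covering transformation of $\pi$ preserves both $\wtP$ and $\wt{Q}$, so $\pi^{-1}(\pi(\wtP)) = \wtP$ and $\pi^{-1}(\pi(\wt{Q})) = \wt{Q}$. Combined with the identity $\pi(\tilde{f}(\wtP)) = f(\pi(\wtP)) = \pi(\wt{Q})$, this yields $\tilde{f}(\wtP) \subseteq \wt{Q}$, and the reverse inclusion follows by applying the same argument to $\tilde{f}^{-1}$.

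To conclude, I would invoke the classical fact, a version of the Alexander trick, that every orientation-preserving self-homeomorphism of $\R^3$ is ambient isotopic to the identity; applied to $\tilde{f}$, this produces an ambient isotopy $\{h_t\}_{t \in [0,1]}$ of $\R^3$ with $h_0 = \mathrm{id}$ and $h_1 = \tilde{f}$, and then $\{h_t(\wtP)\}_{t \in [0,1]}$ is the desired ambient isotopy from $\wtP$ to $\wt{Q}$. The most delicate point is this last step, since $\R^3$ is noncompact and the paper works in the PL category; a standard workaround is to first conjugate $\tilde{f}$ by a PL isotopy to a homeomorphism that is the identity outside a sufficiently large cube, and then apply the Alexander trick on that cube rel boundary.
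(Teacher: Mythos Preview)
Your argument is correct and is essentially the paper's own proof with the details filled in: lift $f$ to $\tilde f$, verify $\tilde f(\wtP)=\wt{Q}$, and invoke that orientation-preserving self-homeomorphisms of $\R^3$ are isotopic to the identity. One small correction to your closing remark: since $\tilde f(x+v)=\tilde f(x)+Av$ for $v\in\mathbb{Z}^3$ with $A=f_*\in SL(3,\mathbb{Z})$, the lift cannot be made the identity outside any bounded set when $A\neq I$, so the compactly-supported Alexander trick does not apply directly. A cleaner fix is first to isotope off the linear part via a path from $I$ to $A$ in $SL(3,\R)$, reducing to a $\mathbb{Z}^3$-equivariant map that descends to a self-homeomorphism of $\Torus$ homotopic (hence, by Waldhausen, isotopic) to the identity; the paper leaves this last step implicit as well.
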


\begin{proof}
    By the assumption, we have an orientation-preserving self-homeomorphism $f$ of $\Torus$ that maps $\pi(\wtP)$ to $\pi(\wt{Q})$.
    Let $\wt{f}$ be the unique lift of $f \circ \pi$.
    Then $\wt{f}$ is a homeomorphism of $\R^3$, and we have $\wt{f}(\wtP) = \wt{Q}$. 
    Therefore, $\wtP$ and $\wt{Q}$ are isotopic.
\end{proof}

By the above lemmas, we have the following proposition.

\begin{prop}\label{prop:frame}
    Let $(\wtP, \pi)$ and $(\wt{Q}, \rho)$ be $n$-colored framed \patts.
    We assume that $\pi(\wtP)$ and $\rho(\wt{Q})$ are homeomorphic under an orientation-preserving or orientation-reversing self-homeomorphism $f$ of $\Torus$ according to whether the covering maps $\pi$ and $\rho$ have the same orientation or different orientations.
    Suppose that any two corresponding handlebodies under $f$ are the images of labyrinthine domains with the same color (after permuting the colors).
    Then $(\wtP, \pi)$ and $(\wt{Q}, \rho)$ are equivalent.
\end{prop}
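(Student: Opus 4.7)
The plan is to reduce to the case in which both patterns share the same frame, then invoke Lemma~\ref{lem:change_lattice} to lift the downstairs homeomorphism to an ambient isotopy upstairs, and finally track the colorings through the construction.

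First, I would apply Lemma~\ref{lem:adjast_frame} to $(\wt{Q}, \rho)$ relative to the covering map $\pi$: this produces a \patt $\wt{Q}\upr$ with frame $\pi$ that is isotopic to $\wt{Q}$, and such that either $\pi(\wt{Q}\upr) = \rho(\wt{Q})$ (when $\pi$ and $\rho$ have the same orientation) or $r_{0}(\pi(\wt{Q}\upr)) = \rho(\wt{Q})$ for some orientation-reversing self-homeomorphism $r_0$ of $\Torus$ (when they have opposite orientations). The isotopy pushes the coloring of $\wt{Q}$ to a coloring of $\wt{Q}\upr$, so $(\wt{Q}\upr, \pi)$ is $n$-\Ecolrd.

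Next, in the orientation-matching case, the orientation-preserving self-homeomorphism $f$ of $\Torus$ satisfies $f(\pi(\wtP)) = \pi(\wt{Q}\upr)$. In the orientation-reversing case, $f$ and $r_0$ are both orientation-reversing, so $r_{0}^{-1} \circ f$ is orientation-preserving and sends $\pi(\wtP)$ onto $\pi(\wt{Q}\upr)$. Either way I obtain an orientation-preserving self-homeomorphism of $\Torus$ sending $\pi(\wtP)$ to $\pi(\wt{Q}\upr)$, so Lemma~\ref{lem:change_lattice} produces a homeomorphism $\wt{f}$ of $\R^3$ with $\wt{f}(\wtP) = \wt{Q}\upr$. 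Concatenating with the isotopy from $\wt{Q}\upr$ back to $\wt{Q}$ yields an ambient isotopy of $\R^3$ carrying $\wtP$ onto $\wt{Q}$.

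The remaining verification is that this ambient isotopy respects the colorings up to a permutation of the color labels. By construction $\wt{f}$ is a lift of $f$ (or of $r_0^{-1}\circ f$), so it sends each labyrinthine domain $V$ of $\wtP$ to a labyrinthine domain $\wt{f}(V)$ of $\wt{Q}\upr$ whose image downstairs is the handlebody $f(\pi(V))$. The hypothesis of the proposition provides a permutation $\sigma$ of colors such that $\pi(V)$ and $f(\pi(V))$ are images of labyrinthine domains related by $\sigma$; since colors are constant on the orbits of the covering transformation groups by Definition~\ref{dfn:coloring}(1), this $\sigma$ transports the coloring of $\wtP$ to that of $\wt{Q}\upr$. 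Composing with the color-preserving isotopy from $\wt{Q}\upr$ to $\wt{Q}$ shows that $(\wtP,\pi)$ and $(\wt{Q},\rho)$ are equivalent as colored patterns.

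The main obstacle is essentially bookkeeping rather than a substantive topological difficulty: one must combine the orientation correction of Lemma~\ref{lem:adjast_frame} with the given $f$ so that their composition is orientation-preserving and hence eligible for Lemma~\ref{lem:change_lattice}, and then verify that the color permutation hypothesized downstairs is compatible with the covering transformation groups of both frames. Once these compatibilities are recorded correctly, the three ingredients (Lemmas~\ref{lem:adjast_frame},~\ref{lem:change_lattice}, and the hypothesized $f$) assemble immediately into the desired equivalence.
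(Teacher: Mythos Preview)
Your proposal is correct and follows precisely the approach the paper intends: the paper does not spell out a proof of this proposition but simply states that it follows from Lemmas~\ref{lem:adjast_frame} and~\ref{lem:change_lattice}, and your argument is exactly the expected unpacking of that remark, including the orientation bookkeeping and the color tracking via Definition~\ref{dfn:coloring}(1).
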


\section{Stabilizations on \patts}\label{sec:patts}

In this section, we will discuss (de)stabilizations of \patts and introduce some examples.

\subsection{Stabilization theorem for \patts}

Section~\ref{sec:stab} showed an analogue of Reidemeister-Singer's theorem for handlebody decompositions (Theorem~\ref{thm:stabeq}).
This subsection shows a \patt version of the theorem.
To do so, we define (de)stabilizations of \patts.
First, we will use an example to explain how to define it.

In Example~\ref{ex:honeycomb_pattern}, we introduced the hexagonal honeycomb pattern that is a $3$-colored framed \patt of type $(1, 1, 1)$.
We consider a type-$1$ stabilization on the pattern.
Let $\wt{\alpha}$ be a properly embedded arc in a sector of the pattern (see Figure~\ref{fig:stabilization_pattern}(a)).
We assume that $\wt\alpha$ is \emph{lifted by $\pi$}, i.e., the restriction of $\pi$ to $\wt\alpha$ is injective, where $\pi$ is the frame of the pattern.
By Corollary~\ref{cor:effective_pattern2decomp}, $\pi(\wtP)$ gives a simple proper handlebody decomposition.
As noted in Example~\ref{ex:honeycomb_pattern}, $P := \pi(\wtP)$ is a simple proper type-$(1, 1, 1)$ handlebody decomposition (see Figure~\ref{fig:nxx}(d)).
Since $\wt\alpha$ is lifted and contained in a sector, the image $\alpha := \pi(\wt\alpha)$ is a properly embedded arc in a sector of $P$. 
Furthermore, $\wt\alpha$ connects two labyrinthine domains mapped to the same handlebody by $\pi$.
So, the arc $\alpha$ connects the same handlebody.
Thus, we can perform a type-$1$ stabilization along $\alpha$.
Hence, a type-$(1, 1, 2)$ handlebody decomposition $P\upr$ is obtained by performing a type-$1$ stabilization along $\alpha$. 
By Corollary~\ref{cor:decomp2effective_pattern}, the preimage of $P\upr$ under $\pi$ gives a $3$-colored framed \patt $\wtP\upr$ of type $(1, 1, 2)$ (see Figure~\ref{fig:stabilization_pattern}(b)).
Hence, we obtain the new \patt $\wtP\upr$ from $\wtP$.
We will call such an operation a \emph{type-$1$ stabilization for \patts}.

\begin{figure}[htbp]
    \centering
    \begin{minipage}{0.48\textwidth}
        \centering
        \includegraphics[width=0.80\textwidth]{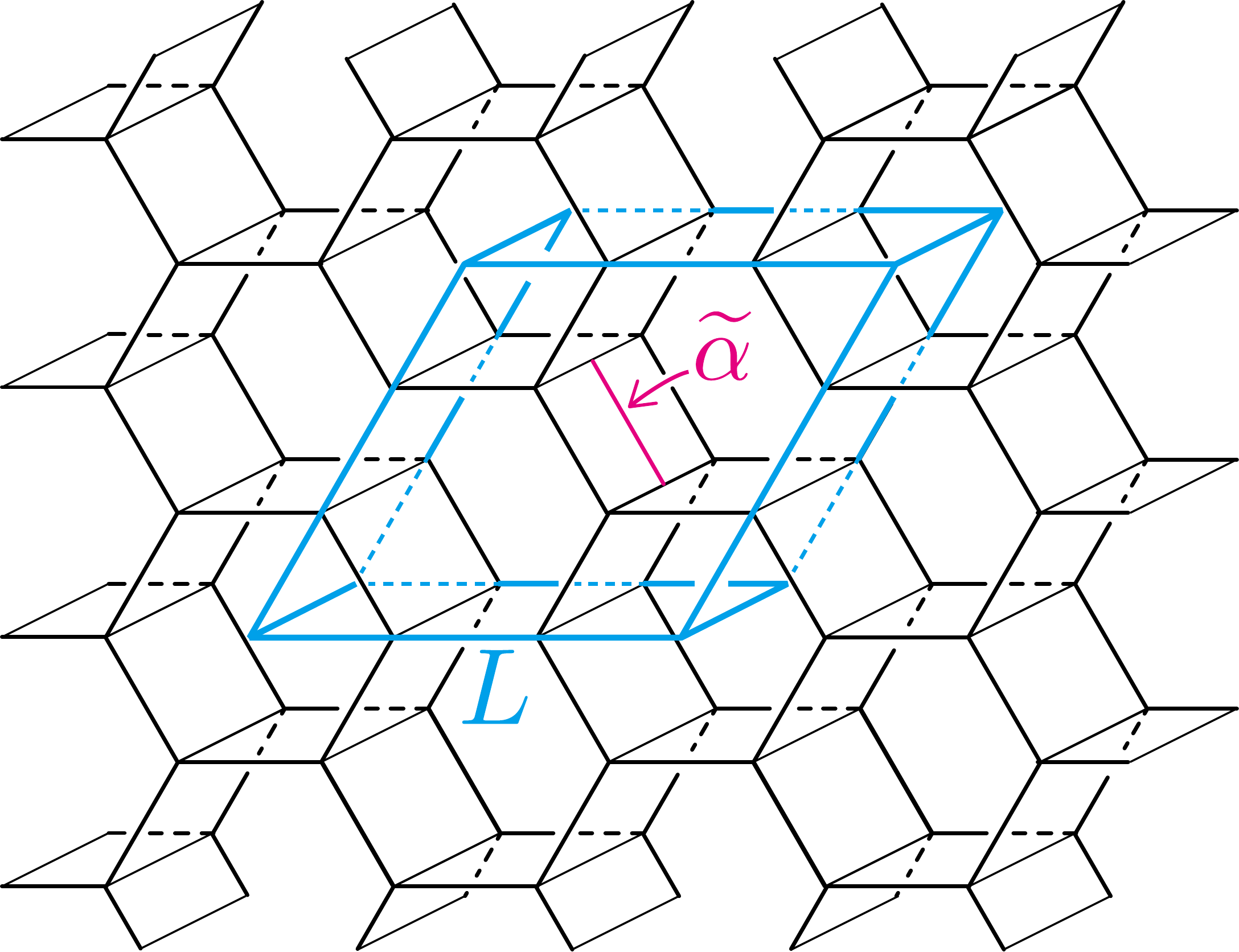}\\
        (a)
    \end{minipage}
    \begin{minipage}{0.48\textwidth}
        \centering
        \includegraphics[width=0.80\textwidth]{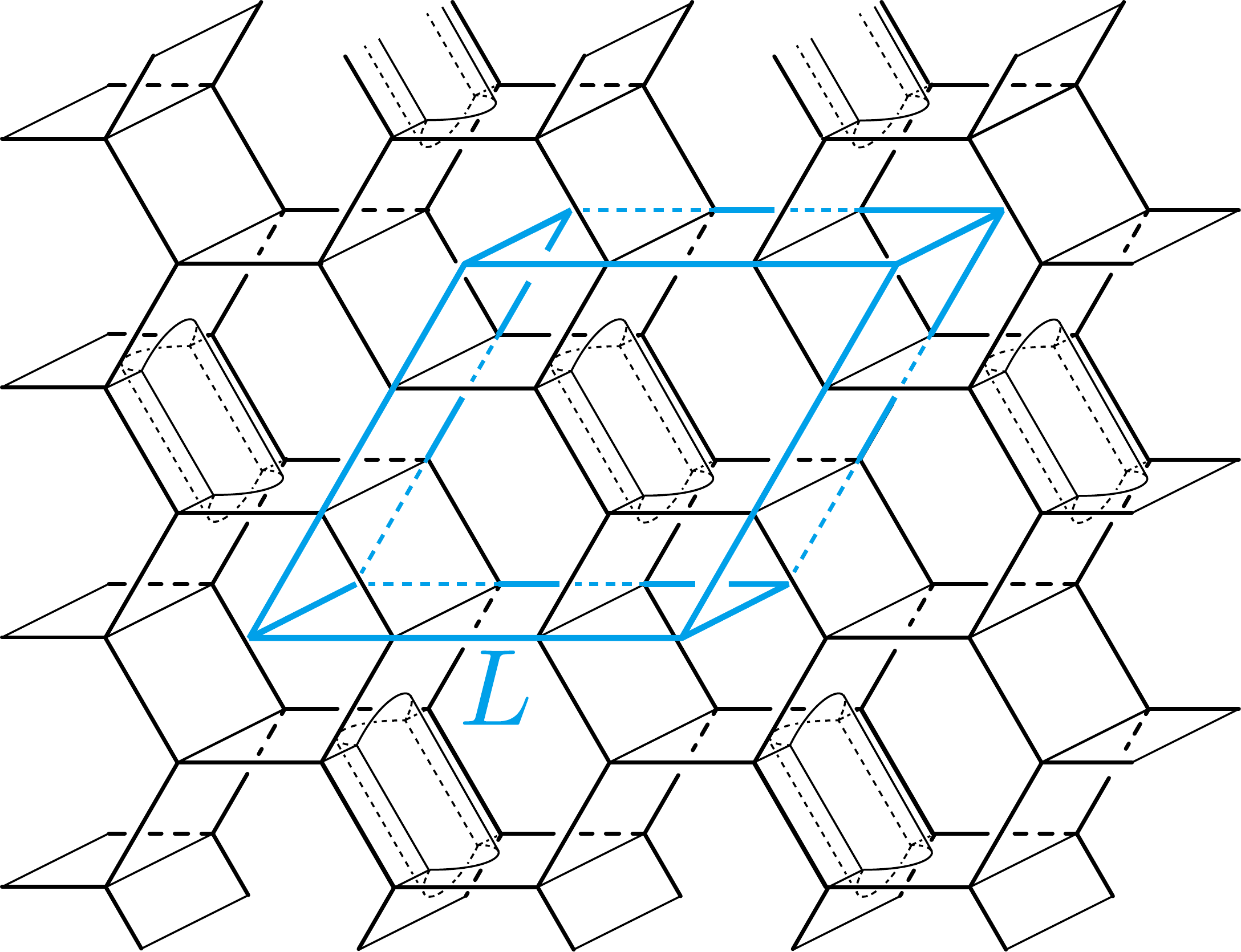}\\
        (b)
    \end{minipage}
    \caption{(a) The hexagonal honeycomb pattern introduced in Example~\ref{ex:honeycomb_pattern}. The parallelepiped $L$ illustrates a fundamental domain of the pattern. (b) The \patt obtained by performing a type-$1$ stabilization on the honeycomb pattern along the properly embedded lifted arc $\wt{\alpha}$.}
    \label{fig:stabilization_pattern}
\end{figure}

Based on the above example, we give a strict definition of stabilizations for patterns as follows.
\begin{dfn}\label{dfn:pattern_stabilization}
    Let $(\wtP, \pi)$ be a simple, $n$-colored, framed \patt of type $(\mathfrak{g}_1, \ldots, \mathfrak{g}_n)$, where $\mathfrak{g}_i$ is a sequence of positive integers $[g\ssn{i}_{1}, \ldots, g\ssn{i}_{m_i}]$ for $1 \leq i \leq n$.
    Put $P = \pi(\wtP)$.
    By Corollary~\ref{cor:effective_pattern2decomp}, $P$ gives a simple proper handlebody decomposition $\mathcal{H} = (H\ssn{1}_1, \ldots, H\ssn{1}_{m_1}, H\ssn{2}_{1}, \ldots, H\ssn{2}_{m_2}, \ldots, H\ssn{n}_1, \ldots, H\ssn{n}_{m_n}; P)$ of $\Torus$ such that $H\ssn{i}_j$ is a genus-$g\ssn{i}_j$ handlebody colored by $i$.
    Let $U$, $U\upr$, $V$, $V\upr$, and $W$ be labyrinthine domains.
    We assume that, for each pair of the labyrinthine domains except for $(U, U\upr)$, $(V, V\upr)$, and $(U, W)$, the two domains are different and share a sector.
    (There is a possibility that $U = U\upr$ or $V = V\upr$.)
    We further assume that $\pi(U) = \pi(U\upr) = H\ssn{i_U}_{j_U}$, $\pi(V) = \pi(V\upr) = H\ssn{i_V}_{j_V}$, and $\pi(W) = H\ssn{i_W}_{j_W}$.
    Here, $H\ssn{i_U}_{j_U}$, $H\ssn{i_V}_{j_V}$, and $H\ssn{i_W}_{j_W}$ are distinct handlebodies, and $i_U \neq i_V$, $i_V \neq i_W$, and $i_W \neq i_U$.

    Depending on the type of stabilization, we take an arc $\wt\alpha$ as follows.
    \begin{enumerate}[label=(type-\arabic*),align=parleft,leftmargin=*,itemindent=35.01pt,itemsep=1mm,start=0]
        \item The arc $\wt\alpha$ is a properly embedded lifted arc in $V$ that connects $U$ and $U\upr$.
            We assume that a lifted disk in $V$ contains $\wt\alpha$ as a part of its boundary, and the other part is contained in $\partial V$.
        \item The arc $\wt\alpha$ is a properly embedded lifted arc in a sector of $\wtP$ that connects $V$ and $V\upr$.
        \item The arc $\wt\alpha$ is a properly embedded lifted arc in $V$ that connects $U$ and $W$.
            We assume that a lifted disk in $V$ contains $\wt\alpha$ as a part of its boundary, and the other part is contained in $\partial V$.
    \end{enumerate}
    Then, we can obtain a new handlebody decomposition $\mathcal{H}\upr$ performed by a suitable stabilization on $\mathcal{H}$ along $\pi(\wt\alpha)$.
    We can see by Corollary~\ref{cor:decomp2effective_pattern} that the preimage of the partition of $\mathcal{H}\upr$ is a simple, colored, framed \patt of type $(\mathfrak{g}_1\upr, \ldots, \mathfrak{g}_n\upr)$.
    Here, each $\mathfrak{g}_i\upr$ is equal to $\mathfrak{g}_i$ except for the following sequences:
    \vspace{1.5mm}
    
    \noindent
    (type-$0$)\;
    $\mathfrak{g}_{i_U}\upr = [g\ssn{i_U}_1, \ldots, g\ssn{i_U}_{j_U} + 1, \ldots, g\ssn{i_U}_{m_{i_U}}]$,\, $\mathfrak{g}_{i_V}\upr = [g\ssn{i_V}_1, \ldots, g\ssn{i_V}_{j_V} + 1, \ldots, g\ssn{i_V}_{m_{i_V}}]$.

    \noindent
    (type-$1$ and type-$2$)\;
    $\mathfrak{g}_{i_V}\upr = [g\ssn{i_V}_1, \ldots, g\ssn{i_V}_{j_V} + 1, \ldots, g\ssn{i_V}_{m_{i_V}}]$.
    \vspace{2.5mm}

    \noindent
    We call this operation a \emph{type-$k$ stabilization (along $\wt{\alpha}$ with respect to $\pi$)} and its inverse operation a \emph{type-$k$ destabilization} for each $k$.
    \ifRSPA
    In Supplementary Information, we discuss sufficient conditions for performing a destabilization.
    \else
    In Appendix~\ref{app:destab}, we discuss sufficient conditions for performing a destabilization.
    \fi
\end{dfn}

Note that the result of a (de)stabilization of a polycontinuous pattern is not necessarily a polycontinuous pattern.
Further note that in a type-$2$ stabilization along an arc for \patts, even if the arc connects different labyrinthine domains, we cannot perform the operation if they are the same color.

Definition~\ref{dfn:moves} introduced some operations for handlebody decompositions called \emph{moves}.
We next consider a \patt version of them.
Of course, we can perform the original operations on simple colored \patts, but they generally lose periodicity after performing them.
Thus, we give adapted ``moves'' to \patts in a similar way to the stabilizations.

\begin{dfn}\label{dfn:pattern_moves}
    Let $(\wtP, \pi)$ be a simple, $n$-colored, framed \patt.
    Take a properly embedded lifted arc $\wt{\alpha}$ in a sector (resp. an edge $\wt{\alpha}$ of the singular graph of $\wtP$) so that it connects labyrinthine domains $V$ and $V\upr$ of different colors.
    By Corollary~\ref{cor:effective_pattern2decomp}, $P := \pi(\wtP)$ gives a simple proper handlebody decomposition $\mathcal{H}$.
    Then, we can obtain a new handlebody decomposition $\mathcal{H}\upr$ performed by a 0-2 (resp. 2-3) move on $P$ along $\pi(\wt{\alpha})$.
    By Corollary~\ref{cor:decomp2effective_pattern} the preimage of the partition of $\mathcal{H}\upr$ is a simple, colored, framed \patt.
    We call such an operation a \emph{0-2 (resp. 2-3) move (along $\wt{\alpha}$ with respect to $\pi$)} and its inverse operation a \emph{2-0 (resp. 3-2) move}.%
%
%
\end{dfn}

Note that, similar to type-$2$ stabilizations of \patts, even if we can perform a move on a handlebody decomposition corresponding to a pattern, it does not necessarily mean that we can perform the corresponding move on the pattern.

An analogue of Reidemeister-Singer's theorem for \patts is as follows.

\begin{cor}\label{cor:pattern_stabilization_theorem}
    Let $(\wtP, \pi)$ and $(\wt{Q}, \rho)$ be simple, $n$-colored, framed \patts of type $(g_1, \ldots, g_n)$ and $(g\upr_1, \ldots, g\upr_n)$, respectively, where $g_i$ and $g\upr_i$ are positive integers.
    Then $\wtP$ and $\wt{Q}$ are equivalent after applying $0$-$2$, $2$-$0$, and $2$-$3$ moves, and type-$0$ and type-$1$ stabilizations finitely many times.
\end{cor}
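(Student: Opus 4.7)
The plan is to reduce the statement to Theorem~\ref{thm:stabeq} via the dictionary between colored framed net-like patterns and proper handlebody decompositions of $T^3$ established in Corollaries~\ref{cor:effective_pattern2decomp} and~\ref{cor:decomp2effective_pattern}.

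First, using Lemma~\ref{lem:adjast_frame}, I would replace $(\wt{Q}, \rho)$ by an isotopic framed pattern sharing the frame $\pi$ of $(\wtP, \pi)$, at the possible cost of composing the induced decomposition with an orientation-reversing self-homeomorphism of $T^3$; this is absorbed into Proposition~\ref{prop:frame}. Setting $P := \pi(\wtP)$ and $P' := \pi(\wt{Q})$, Corollary~\ref{cor:effective_pattern2decomp} gives simple proper handlebody decompositions $\mathcal{H}$ and $\mathcal{H}'$ of $T^3$ of types $(g_1, \ldots, g_n)$ and $(g'_1, \ldots, g'_n)$, respectively. Applying Theorem~\ref{thm:stabeq}, one obtains a finite sequence of $0$-$2$, $2$-$0$, $2$-$3$ moves and type-$0$ and type-$1$ stabilizations transforming $\mathcal{H}$ into a decomposition equivalent to $\mathcal{H}'$.

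The core of the argument is to lift each handlebody operation in this sequence to a pattern operation in the sense of Definitions~\ref{dfn:pattern_stabilization} and~\ref{dfn:pattern_moves}. Every such handlebody operation is performed along an arc, or for $2$-$0$ moves a disk, embedded in $T^3$. The key observation is that any connected lift of such an embedded arc or disk to $\mathbb{R}^3$ is automatically ``lifted'' in the sense required by those definitions, because $\pi$ is a covering map and the arc is embedded in $T^3$, so $\pi$ restricts to an injection on each connected preimage. Having chosen such a lift $\wt\alpha$, its endpoints lie on the singular graph of $\wtP$ and determine the labyrinthine domains $U$, $U'$, $V$, $V'$, $W$ appearing in Definition~\ref{dfn:pattern_stabilization}. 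The corresponding pattern operation can then be performed, projects under $\pi$ to the given handlebody operation, and by Corollary~\ref{cor:decomp2effective_pattern} produces an $n$-colored framed net-like pattern with the same frame $\pi$.

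After performing the full sequence of pattern operations, one obtains $(\wtP'', \pi)$ such that $\pi(\wtP'')$ is equivalent in $T^3$ to $\pi(\wt{Q})$ via a self-homeomorphism respecting the correspondence between handlebodies and colors up to a permutation of colors. Proposition~\ref{prop:frame} then yields the required equivalence of colored patterns $\wtP''$ and $\wt{Q}$, completing the proof. The main technical obstacle is the case-by-case verification that the labyrinthine-domain and color-distinctness requirements built into Definition~\ref{dfn:pattern_stabilization} are automatically satisfied by the data read off from a lifted arc realizing the corresponding handlebody operation. This rests on the fact that, since each $g_i$ in the type is a single positive integer, every color corresponds to precisely one handlebody of the decomposition, so the colors of $H_i$, $H_j$, $H_k$ in Definition~\ref{dfn:stabilization} are pairwise distinct and induce the color inequalities $i_U \neq i_V$, $i_V \neq i_W$, $i_W \neq i_U$ required by Definition~\ref{dfn:pattern_stabilization}.
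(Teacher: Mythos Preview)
Your approach matches the paper's: descend to handlebody decompositions of $T^3$ via Corollary~\ref{cor:effective_pattern2decomp}, invoke Theorem~\ref{thm:stabeq}, and return to patterns via Corollary~\ref{cor:decomp2effective_pattern} and Proposition~\ref{prop:frame}. Your explicit check that lifted arcs meet the color-distinctness hypotheses of Definitions~\ref{dfn:pattern_stabilization} and~\ref{dfn:pattern_moves} (using that each color corresponds to a single handlebody) is a detail the paper leaves implicit.

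One point needs adjustment. Theorem~\ref{thm:stabeq} does not furnish a single sequence transforming $\mathcal{H}$ into a decomposition equivalent to $\mathcal{H}'$; it only says that after applying the listed operations to \emph{both} $\mathcal{H}$ and $\mathcal{H}'$ they become equivalent. Since destabilizations and $3$-$2$ moves are not among the allowed operations, you cannot reverse the sequence on the $\mathcal{H}'$ side to obtain a one-way path from $\mathcal{H}$ to $\mathcal{H}'$. The paper handles this by taking a common target partition $R$, setting $\wt{R} = \pi^{-1}(R)$, checking via Corollary~\ref{cor:decomp2effective_pattern} that $\wt{R}$ is a simple $n$-colored pattern, and then applying Proposition~\ref{prop:frame} on each side to conclude that both $\wtP$ and $\wt{Q}$ become equivalent to $\wt{R}$ after the lifted operations. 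With this two-sided phrasing in place, your argument goes through.
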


\begin{proof}
    We assume that $\pi$ and $\rho$ have the same orientation.
    The proof of the other case is similar.
    By Corollary~\ref{cor:effective_pattern2decomp}, the images $P := \pi(\wtP)$ and $Q := \rho(\wt{Q})$ give type-$(g_1, \ldots, g_n)$ and type-$(g_1\upr, \ldots, g_n\upr)$ simple proper handlebody decompositions of $\Torus$, respectively. 
    Hence, by Theorem~\ref{thm:stabeq}, there exists a simple proper handlebody decomposition such that $\pi(\wt{P})$ and $\rho(\wt{Q})$ are isotopic to the partition $R$ of the decomposition after applying 0-2, 2-0, and 2-3 moves, and type-$0$ and type-$1$ stabilizations to them finitely many times.
    By Corollary~\ref{cor:decomp2effective_pattern}, $\wt{R} := \pi^{-1}(R)$ is a simple $n$-colored \patt.
    Therefore, by Proposition~\ref{prop:frame}, each of $\wt{P}$ and $\wt{Q}$ is equivalent to $\wt{R}$ after applying 0-2, 2-0, and 2-3 moves, and type-$0$ and type-$1$ stabilizations finitely many times.
\end{proof}

In the above corollary, we assume that, for each color, all labyrinthine domains colored by it are mapped to the same handlebody because moves performed in the proof of Theorem~\ref{thm:stabeq} generally do not preserve the coloring.
On the concept of colorings, we can regard single-colored domains as composed of the same kind of blocks, so connecting these parts is a natural idea.

\begin{dfn}\label{dfn:domain_connection}
    Let $(\wtP, \pi)$ be a simple, $n$-colored, framed \patt.
    Take a properly embedded lifted arc $\wt{\alpha}$ in a sector so that it connects labyrinthine domains, $V$ and $V\upr$, of the same color.
    We assume that $H := \pi(V)$ and $H\upr := \pi(V\upr)$ are different handlebodies of the simple proper handlebody decomposition induced by $P := \pi(\wtP)$.
    By performing a 0-2 move on $P$ along $\pi(\wt{\alpha})$,
    the modified $H$ and $H\upr$ are intersected, and by Corollary~\ref{cor:effective_pattern2decomp}, their intersection consists of only the disk created by the operation.
    So, $H\dupr := H \cup H\upr$ is also a handlebody.
    Hence, we have a new handlebody decomposition by replacing $H$ and $H\upr$ with $H\dupr$.
    By Corollary~\ref{cor:decomp2effective_pattern}, the decomposition induces a new simple, colored, framed \patt $(\wtP\upr, \pi)$.
    We call such an operation a \emph{domain-connection (along $\wt{\alpha}$ with respect to $\pi$)} and its inverse operation a \emph{domain-disconnection}.
\end{dfn}

\begin{rem}\label{rem:domain_connection}
    We can obtain the type of $(\wtP\upr, \pi)$ in Definition~\ref{dfn:domain_connection} as follows.
    Let $\mathfrak{g}_i$ be the sequence of positive integers in the type of $(\wtP, \pi)$ corresponding to the color of the labyrinthine domains $V$ and $V\upr$.
    We remove the integers corresponding to $V$ and $V\upr$ from $\mathfrak{g}_i$ and append their sum.
    Then, we denote a new sequence by $\mathfrak{g}_i\upr$.
    By replacing $\mathfrak{g}_i$ with $\mathfrak{g}\upr_i$, we obtain the type of $(\wtP\upr, \pi)$.
\end{rem}

By applying the following to a colored \patt, it satisfies the assumption of Corollary~\ref{cor:pattern_stabilization_theorem}.

\begin{lem}\label{lem:connect_labyrinth}
    Let $(\wtP, \pi)$ be a simple, $n$-colored, framed \patt of type $(\mathfrak{g}_1, \ldots, \mathfrak{g}_n)$, where $\mathfrak{g}_i$ is a sequence of positive integers $[g\ssn{i}_{1}, \ldots, g\ssn{i}_{m_i}]$ for $1 \leq i \leq n$.
    Set $\widehat{g}_i = \sum_{k = 1}^{m_i} g\ssn{i}_k$.
    Then, we have a simple, $n$-colored, framed \patt of type $(\widehat{g}_1, \ldots, \widehat{g}_n)$ by applying 0-2 moves and domain-connections with respect to $\pi$ finitely many times to $\wtP$.
\end{lem}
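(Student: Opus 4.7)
The plan is to argue by induction on $N = \sum_{i=1}^{n}(m_i - 1)$, the number of merges required to reduce each sequence $\mathfrak{g}_i$ to a single entry. When $N = 0$, every $m_i = 1$ and the pattern is already of type $(\widehat{g}_1, \ldots, \widehat{g}_n)$.

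For the inductive step ($N \geq 1$), pick a color $i$ with $m_i \geq 2$ and two handlebodies $H = H\ssn{i}_j$, $H\upr = H\ssn{i}_k$ of color $i$ in the quotient decomposition induced by $P = \pi(\wtP)$. Since the coloring is valid, no two same-color handlebodies share a sector, so $F_{HH\upr} = \emptyset$ initially. I will prepare $\wtP$ by finitely many $0$-$2$ moves on the pattern and then apply a single domain-connection along a suitable lifted arc $\wt{\alpha}$ that merges $H$ and $H\upr$ into a single handlebody of genus $g\ssn{i}_j + g\ssn{i}_k$. In the cover this fuses each pair of lifts $(V_t, V\upr_t)$ indexed by the covering group $\mathbb{Z}^3$, and in the quotient it produces the boundary connected sum of $H$ and $H\upr$ along the newly created disk in $F_{HH\upr}$. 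By Remark~\ref{rem:domain_connection} the type updates by deleting the entries $g\ssn{i}_j$ and $g\ssn{i}_k$ from $\mathfrak{g}_i$ and appending their sum, so $N$ decreases by one and the induction closes.

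The preparatory step depends on the following sub-claim: given two handlebodies $H, H\upr$ in a simple proper handlebody decomposition of $\Torus$, finitely many $0$-$2$ moves on $P$ can produce a partition admitting a properly embedded arc $\alpha$ in some sector whose $0$-$2$ move creates a disk in $F_{HH\upr}$. My plan for this is induction on the distance $d$ between $H$ and $H\upr$ in the adjacency graph of handlebodies (connected because $P$ is connected). The case $d = 1$ is immediate. For $d > 1$, a $0$-$2$ move is used to ``push'' a disk through an intermediate handlebody on a shortest chain, reducing $d$ by one. To translate each of these moves back to the pattern one chooses lifted arcs and checks that the intermediate $0$-$2$ move on $\wtP$ connects labyrinthine domains of different colors, as required by Definition~\ref{dfn:pattern_moves}.

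\textbf{Main obstacle.} The hardest part is establishing the sub-claim: a careful combinatorial analysis of how $0$-$2$ moves rearrange the local configurations of triple points and sectors is required to show that $0$-$2$ moves alone (without $2$-$3$ moves or stabilizations) suffice to tunnel a disk sector between any two prescribed handlebodies. A secondary technical point is tracking the coloring throughout the construction, to verify that each intermediate $0$-$2$ move on $\wtP$ respects the different-color requirement inherent to $0$-$2$ moves on patterns.
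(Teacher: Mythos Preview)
Your proposal is correct and follows essentially the same approach as the paper. The paper phrases the tunneling step more directly: by connectedness of $\wtP$, it chooses a single lifted arc $\wt\delta$ in $\wtP$ joining two color-$i$ labyrinthine domains $V, V\upr$ (with $\pi(V)\neq\pi(V\upr)$), arranged so that $\wt\delta$ does not pass along any other color-$i$ domain; cutting $\wt\delta$ at its intersections with the singular graph produces sub-arcs $\wt\delta_1,\ldots,\wt\delta_k$, one performs $0$-$2$ moves along $\wt\delta_1,\ldots,\wt\delta_{k-1}$ and then a domain-connection along $\wt\delta_k$, and repeats. Your induction on the adjacency distance $d$ is exactly this arc-cutting in disguise, and the coloring issue you flag as a secondary obstacle is handled by the simple device of choosing the path to avoid color-$i$ domains (equivalently, stopping at the first color-$i$ domain one meets and merging with it). So the ``careful combinatorial analysis'' you anticipate as the main obstacle is not needed.
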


\begin{proof}
    Since $\wt{P}$ is connected, for each color $i$, there exist labyrinthine domains, $V$ and $V\upr$, with color $i$ and an embedded lifted arc $\wt{\delta}$ joining $V$ and $V\upr$ in $\wtP$ such that the following hold:
    \begin{enumerate}[label=(\arabic*)]
        \item The images $\pi(V)$ and $\pi(V\upr)$ are distinct handlebodies.
        \item The arc $\wt\delta$ does not cross any labyrinthine domains with color $i$.
        \item The arc $\wt\delta$ intersects the singular graph of $\wtP$ transversely.
    \end{enumerate}
    Then, by cutting $\wt\delta$ at its intersection with the singular graph, we obtain the sequence of sub-arcs $\wt\delta_1, \ldots, \wt\delta_k$.
    Thus, we can perform 0-2 moves along $\wt\delta_1$, $\ldots$, $\wt\delta_{k-1}$, and we can finally apply domain-connection along $\wt\delta_k$.
    By repeating the above process, all labyrinthine domains with color $i$ are joined.
    Then, the type corresponding to color $i$ is $\widehat{g}_i$ by Remark~\ref{rem:domain_connection}.
    Therefore, we have a \patt of type $(\widehat{g}_1, \ldots, \widehat{g}_n)$.
\end{proof}

\subsection{Microphase separation of a block copolymer melt}\label{subsec:polymer}

One motivation for this research comes from materials science.
We are interested in the characterization of \emph{polycontinuous patterns} that appear as microphase separation of a block copolymer melt \cite{Hyde2,Hyde1}.


 In this subsection, we discuss block copolymers and phase separation of a block copolymer melt. 
 One reference of this subject is \cite{Fredrickson}. 
 A {\em polymer} is a molecule of high molecular weight created by chemically coupling large numbers of small reactive molecules, called {\em monomers}. 
 If a polymer is made of one type of monomers, it is called a {\em homopolymer}. 
 A polymer containing two or more chemically distinct monomers is referred to as a {\em copolymer}. 
 A {\em block copolymer} is an important type of copolymers, in which monomers of a given type form polymerized sequences called {\em blocks}. 
 If a block copolymer contains two (respectively three) blocks, it is called a {\em diblock} (resp. {\em triblock}) copolymer. 
 If a linear diblock copolymer is made of blocks of monomers A and B, it is called an {\em AB diblock copolymer}. 
 An {\em ABA triblock copolymer} is a linear triblock copolymer consisting of a sequence of a block of monomer A, a block of monomer B, and a block of monomer A. 
 See Figure~\ref{fig:blockcopolymer}(a).
 SBS (styrene-butadiene-styrene) triblock and SIS (styrene-isoprene-styrene) triblock copolymers are examples of linear triblock copolymers. 
 Polymers with more complex architecture have been synthesized. 
 For example, a {\em star polymer} has one branched point connecting several linear polymers. 
 An {\em ABC triblock-arm star-shaped molecule} ({\em 3-star polymer}) as in Figure~\ref{fig:blockcopolymer}(a) is an example of triblock copolymer with a star architecture, where A, B, and C blocks are mutually immiscible.

 A {\em block copolymer melt} is a solvent-free viscoelastic liquid composed of block copolymers.
 Due to the chemical distinction of monomers, we can observe phase separation in a block copolymer melt. 
 A {\em domain} of phase separation consists of monomers of one type.
 {\em Microphase separation} of a block copolymer melt is phase separation with domains of the mesoscopic size scale. 
 Sphere, cylinder, bicontinuous, and lamellar structures appear as microphase separation of AB diblock or ABA triblock copolymers \cite{Fredrickson, Matsen}.

An example of \emph{bicontinuous patterns} is the Gyroid surface.
In materials science, in the bicontinuous pattern of an AB diblock copolymer melt, the domain of the A monomer is the neighborhood of the partition surface, and that of the B monomer forms two labyrinths (see Figure~\ref{fig:blockcopolymer}(b)).

A {\em tricontinuous pattern} is a mathematical model of microphase separation of an ABC star-shaped block copolymer melt.
The branch line of a tricontinuous pattern consists of the connection points of the A, B, and C blocks in the block copolymers \cite{Hyde1}.
A sector of a tricontinuous pattern is the interface of two domains.

Next, we discuss a mathematical model of microphase separation with four phases.
Let A, B, C, D be four chemically distinct monomers.
We consider the polycontinuous pattern of melts of 4 types of 3-star block copolymers of ABC, ABD, ACD, and BCD.
In this case, four different branched lines appear.
The interface of domains and these branched lines form a simple polyhedron.
The vertex of the simple polyhedron of the polycontinuous pattern corresponds to the point where four domains A, B, C, and D meet.
The four edges corresponding to the connecting points of the ABC, ABD, ACD, and BCD triblock star polymers are placed around a vertex.
Also, ABCD 4-star polymers are synthesized~\cite{Iatrou,Mavroudis}, and their morphologies have been discussed in~\cite{Gupta,Wang}.
The joining point of 4 blocks of the block copolymer corresponds to a vertex of the simple polycontinuous pattern.

We want to analyze the property of materials with this structure via a topological study of these polycontinuous patterns.
We hope the characterization and the classification of polycontinuous patterns will lead to the design of polymeric materials with the desired properties.

As an application of Corollary~\ref{cor:pattern_stabilization_theorem}, we can discuss the relation between two microphase-separated structures of the same type.
Here we discuss the polymer science implications of stabilization and destabilization operation of patterns.

\begin{obs}
The type-$0$ destabilization for a bicontinuous pattern can be considered as the model of the canceling of an unstable local 1-handle structure of the pattern of the microphase separation.
\end{obs}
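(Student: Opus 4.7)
The plan is to unpack the observation by translating the data of a bicontinuous pattern into the topological language of the paper, identifying what a type-$0$ destabilization does in that setting, and then matching the result against the materials-science picture of a locally unstable $1$-handle.

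First I would invoke Proposition~\ref{prop:pattern2decomp}: a bicontinuous framed \patt $(\wt{P}, \pi)$ has exactly two labyrinthine domains, so $\pi(\wt{P})$ is the partition of a simple proper handlebody decomposition $(H_1, H_2; P)$ of $\Torus$. As noted immediately after Remark~\ref{rem:cl_Hdecomp}, a simple proper handlebody decomposition with $n = 2$ is precisely a Heegaard splitting. Thus the polymer-physics datum of a bicontinuous microphase-separated structure is topologically encoded by a Heegaard splitting of $\Torus$, lifted periodically to $\R^3$.

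Next I would specialize Definition~\ref{dfn:stabilization} to the case $n = 2$ and check that a type-$0$ (de)stabilization in this setting is exactly the classical (de)stabilization of Heegaard splittings. The forward direction is transparent: the arc $\alpha$ together with the parallel boundary arc $\beta$ bounds a disk in $H_i$, so removing $N(\alpha)$ from $H_i$ and attaching it to $H_j$ is the attachment of a trivial $1$-handle, that is, the standard Heegaard stabilization increasing both genera by one. The inverse operation is the cancellation of a pair of disks, one in each handlebody, whose boundaries meet transversely once on the Heegaard surface, i.e., a $1$-handle/$2$-handle cancellation. Lifting via $\pi$, the same local picture holds equivariantly in $\R^3$. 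An \emph{unstable local $1$-handle structure} of a bicontinuous pattern is, informally, a local tube in one labyrinth together with a matching pocket in the other such that the tube can be pinched off without altering the global connectivity of either labyrinth; in the above topology this local tube is exactly the regular neighborhood $N(\alpha)$, the matching pocket is its canceling disk, and the pinching-off is exactly a type-$0$ destabilization. The observation thus reduces to the assertion that, under the correspondence bicontinuous pattern $\leftrightarrow$ Heegaard splitting of $\Torus$, type-$0$ destabilization faithfully models the physical handle cancellation.

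The main obstacle is conceptual rather than technical: one must ensure that the physical notions of \emph{local} and \emph{unstable} are captured precisely by the existence of a canceling disk pair, and are not refined by some finer energetic criterion from polymer physics that would forbid destabilizing a topologically cancelable handle. The mathematics can only establish topological equivalence; whether a given cancelable handle is physically realized as unstable is a modeling assumption rather than a theorem.
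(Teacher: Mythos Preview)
The paper does not provide a proof of this statement: it is an \emph{Observation} in the materials-science discussion and is simply asserted as an interpretive remark, with no accompanying argument. There is therefore nothing in the paper to compare your proposal against line by line.

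That said, your unpacking is correct and is exactly the justification one would give if asked to defend the observation. The identification of a bicontinuous pattern with a Heegaard splitting of $\Torus$ (via Proposition~\ref{prop:pattern2decomp} and Remark~\ref{rem:biconti}), the specialization of Definition~\ref{dfn:stabilization} at $n=2$ to the classical Heegaard (de)stabilization, and the matching of the cancelling disk pair $(D_j,E)$ with the informal notion of a local tube that can be pinched off --- all of this is sound and is implicit in the paper's setup. Your closing caveat, that the word \emph{unstable} carries physical content not captured by the topology and that the statement is ultimately a modelling assumption rather than a theorem, is also well taken; the paper itself signals this by labelling the statement an Observation rather than a Proposition.

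In short: your proposal is correct, and it supplies an argument where the paper offers none.
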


\begin{obs}
The type-$1$ destabilization (resp. stabilization) for a polycontinuous pattern can be considered as the model of the separation (resp. amalgamation) of the domains during the uniaxial elongation of polymeric materials.
\end{obs}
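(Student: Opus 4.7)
The plan is to justify the correspondence between the topological operation and the physical process, since the claim is interpretive rather than a formal theorem. I would unpack the local effect of a type-$1$ (de)stabilization and then match it with the microstructural event observed during elongation.

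First, consider a type-$1$ stabilization as specified in Definition~\ref{dfn:pattern_stabilization}: a lifted properly embedded arc $\wt{\alpha}$ lying in a sector between labyrinthine domains $V$ and $V'$ of a common colour is used to attach a $1$-handle $N(\wt{\alpha})$ joining $V$ to $V'$. Equivalently, in the quotient $\Torus$, a tube is attached to the handlebody $H\ssn{i_V}_{j_V}$ along an arc lying in a sector $F_{jk}$ between two handlebodies of colours distinct from $i_V$, while a tubular neighbourhood of that arc is removed from each of those two foreign handlebodies. In the labyrinth picture this means a tunnel is punched through the barrier between the $j$- and $k$-coloured phases so that the $i_V$-coloured network acquires a new bridge, amalgamating two strands (or two visits of a single strand) of that colour into a single, more interconnected network. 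The inverse operation (type-$1$ destabilization) removes such a bridge and therefore separates one branch of the network into two.

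Next, I would identify this with the uniaxial-elongation picture. Under a uniaxial strain, a domain oriented transverse to the stretching axis is thinned; its interface with a neighbouring phase may either bulge through to form a new neck between previously disjoint portions of a single phase (amalgamation), or, conversely, an existing neck may be pinched off into two disconnected segments (separation). The topological signature of either microstructural event is precisely a $\pm 1$ change in the genus of one labyrinthine network, accompanied by the appearance or disappearance of a small perforation on the interface between the two neighbouring phases --- exactly the combinatorics carried out by a type-$1$ stabilization or destabilization.

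The main obstacle is not mathematical rigour but a faithful match of the combinatorial move with the physical deformation. In particular, one must argue that placing the arc $\wt{\alpha}$ in a sector \emph{between two foreign phases}, rather than inside the phase that is being amalgamated, genuinely models the way a neck nucleates at an interfacial region during elongation. This is natural in the multi-block star-copolymer melts surveyed in Subsection~\ref{subsec:polymer}, where three or more blocks meet along the singular graph of $\wtP$ and where the elongation-induced rearrangement of blocks is localised precisely along such shared interfaces.
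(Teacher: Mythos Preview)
The paper offers no proof for this statement: it is recorded as an \emph{Observation} in Subsection~\ref{subsec:polymer} and is left as a bare interpretive remark linking the mathematics to materials science, with no accompanying argument. Your proposal therefore does not conflict with the paper's proof, because there is none to compare against.

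Your heuristic justification is reasonable and your description of the local effect of a type-$1$ stabilization is accurate: the arc $\wt\alpha$ sits in a sector bounding two foreign-coloured domains, and attaching $N(\wt\alpha)$ to $H\ssn{i_V}_{j_V}$ creates a new bridge in that phase at the expense of a tubular neighbourhood carved from the other two. One small wording issue: you write that $\wt\alpha$ lies ``in a sector between labyrinthine domains $V$ and $V'$ of a common colour,'' which could be misread as saying the sector itself separates $V$ from $V'$; in fact the sector is the $jk$-interface and it is the \emph{endpoints} of $\wt\alpha$ that abut $V$ and $V'$ along the singular graph. You clarify this in the next sentence, but tightening the first description would avoid confusion. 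Beyond that, what you have written is a plausible physical reading that simply goes further than the paper itself attempts.
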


\begin{figure}[htbp]
    \centering
    \begin{minipage}[b]{0.55\textwidth}
        \centering
        \includegraphics[width=0.80\textwidth]{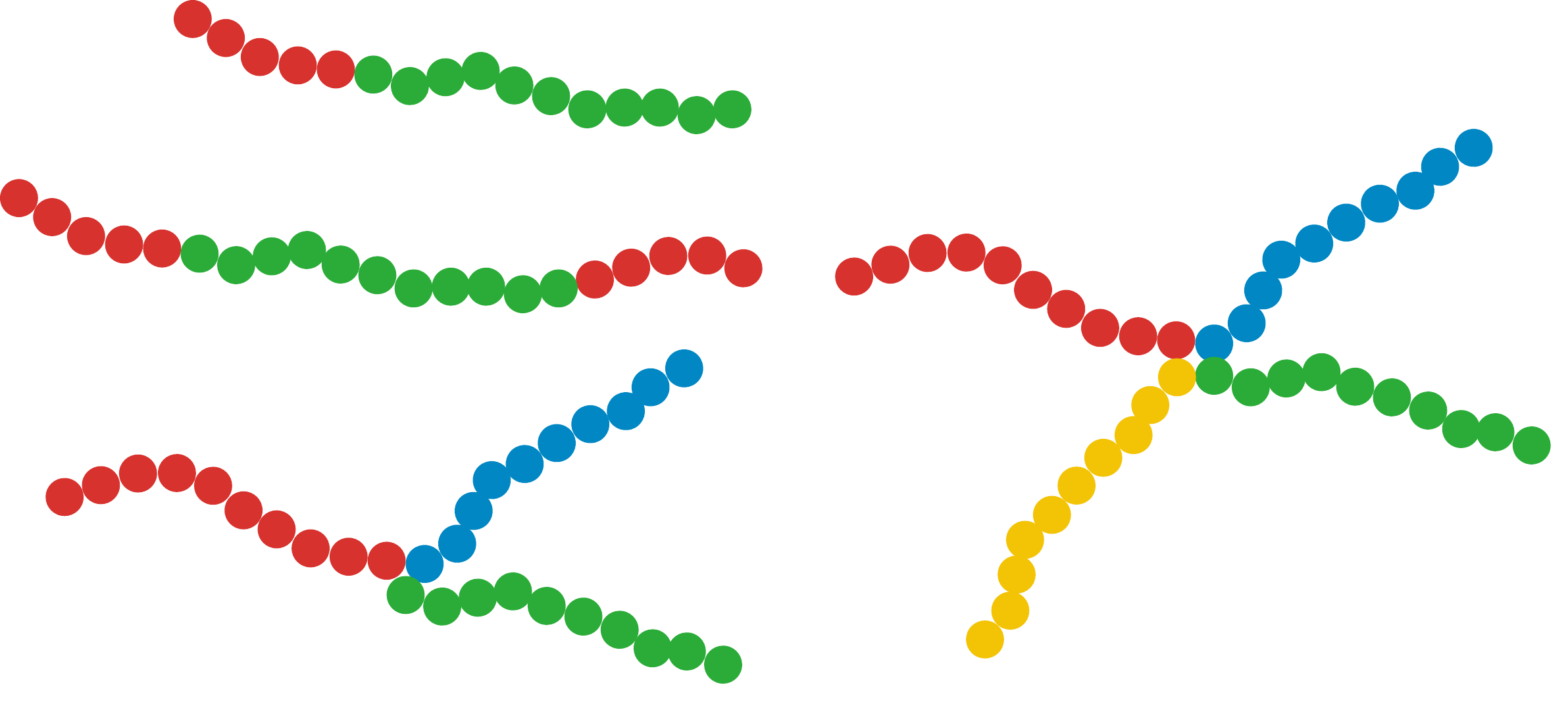}\\
        (a)
    \end{minipage}
    \begin{minipage}[b]{0.40\textwidth}
        \centering
        \includegraphics[width=0.60\textwidth]{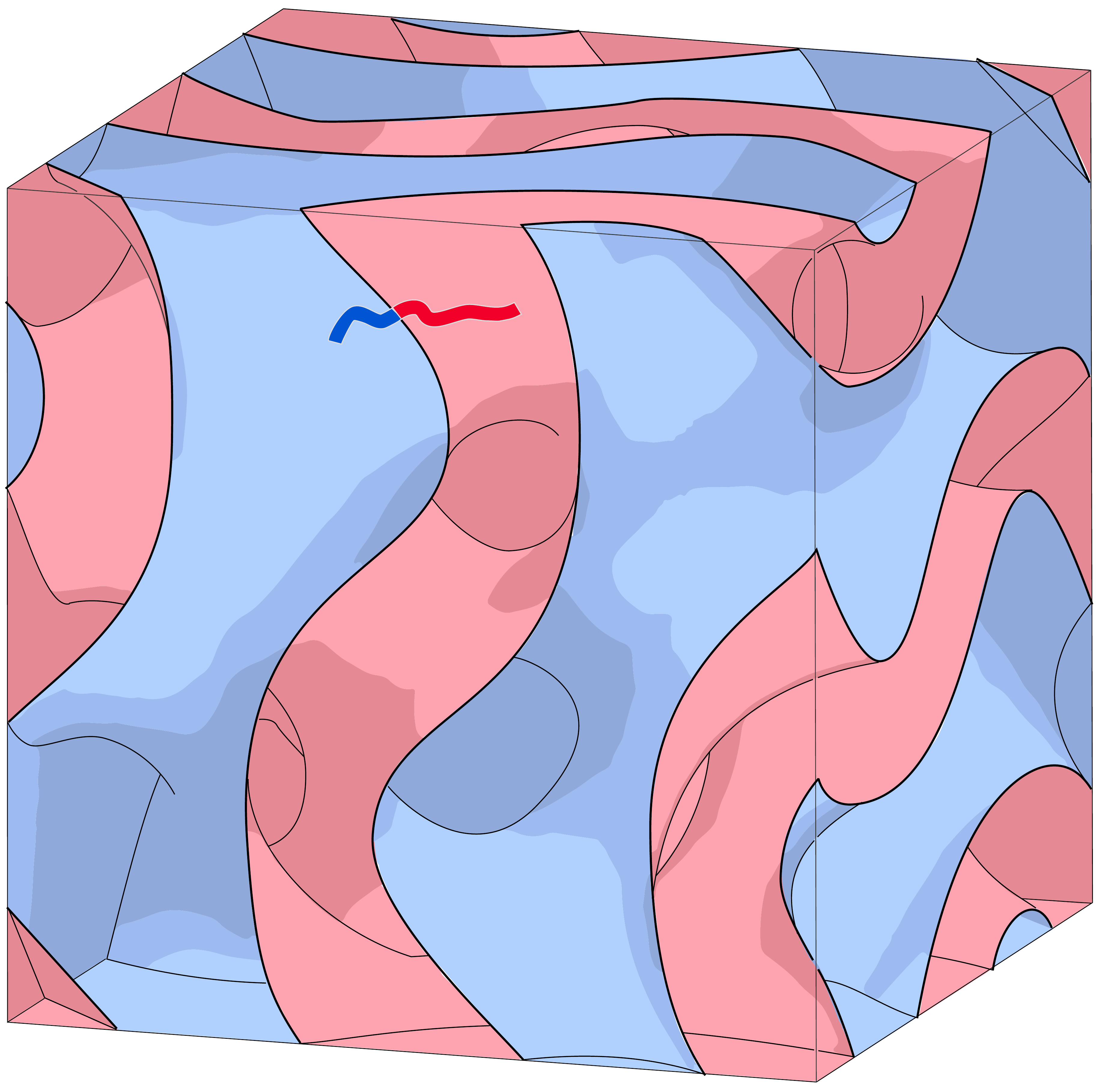}\\
        (b)
    \end{minipage}
    \caption{(a) A red dot indicates the monomer A, blue monomer B, green monomer C, and yellow monomer D. Left: AB diblock copolymer, ABA triblock copolymer, and ABC star-shaped block copolymer. Right: ABCD star-shaped block copolymer. (b) A double gyroid that is a famous bicontinuous pattern.}
    \label{fig:blockcopolymer}
\end{figure}

\subsection{Example: a 3srs pattern}

A 3srs pattern is an example of a tricontinuous pattern.
In this subsection, we will show the pattern can be destabilized to the hexagonal honeycomb pattern.

First, we introduce a 3srs net.
An \emph{srs} net is a $3$-periodic ``minimal'' net in $\R^3$ (see~\cite{Beukemann} and Figure~\ref{fig:srs}(a)).
Figure~\ref{fig:srs}(a) illustrates an srs net with a cubical fundamental domain, of which the length of each edge is $8$.
The net is an infinite trivalent graph, and the \emph{space group} of it is $P4_332$.
Note that a $2\pi/3$ rotation around the cube diagonal (shown in Figure~\ref{fig:srs}(a)) generates an action of order $3$ and preserves the cube.
A \emph{$3$srs} net is the union of the images of the srs net under the action (see Figure~\ref{fig:srs}(b)).

Figure~\ref{fig:triconti-3srs}(a) illustrates a branched surface in $\R^3$ with a cubical fundamental domain.
The branched surface is the union of precisely three surfaces with the boundary (see Figure~\ref{fig:triconti-3srs}(b--d)).
It is clear that the branched surface is a simple $3$-colored tricontinuous pattern, and each component of the $3$srs net is a labyrinthine net of the pattern.
We call the tricontinuous pattern the \emph{$3$srs pattern}.
The $3$srs pattern is of type $(5, 5, 5)$ as illustrated in Figures~\ref{fig:srs} and \ref{fig:triconti-3srs}.

\begin{figure}[htbp]
    \centering
    \begin{minipage}[b]{0.38\textwidth}
        \centering
        \includegraphics[width=0.95\textwidth]{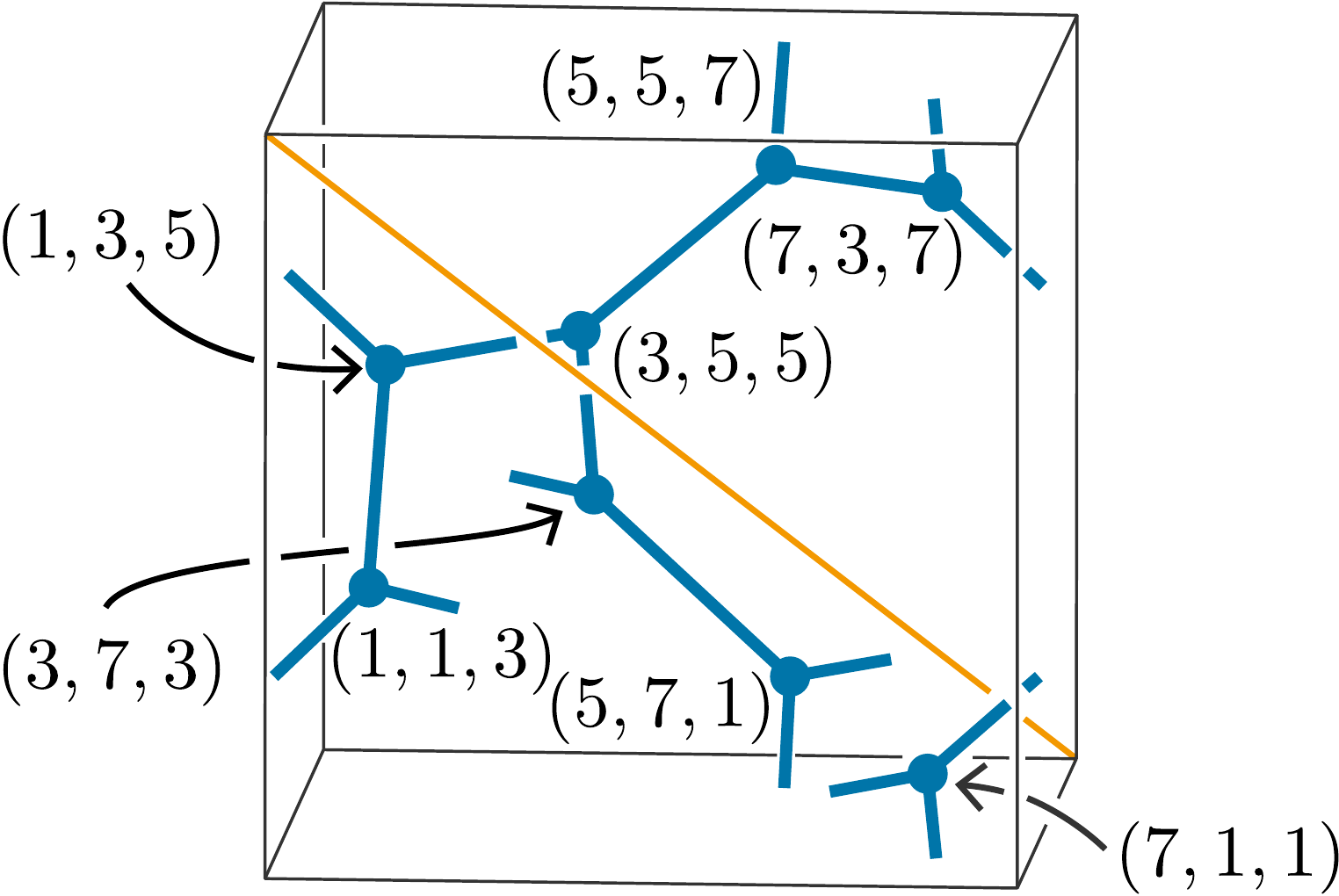}\\
        (a)
    \end{minipage}
    \begin{minipage}[b]{0.38\textwidth}
        \centering
        \includegraphics[width=0.65\textwidth]{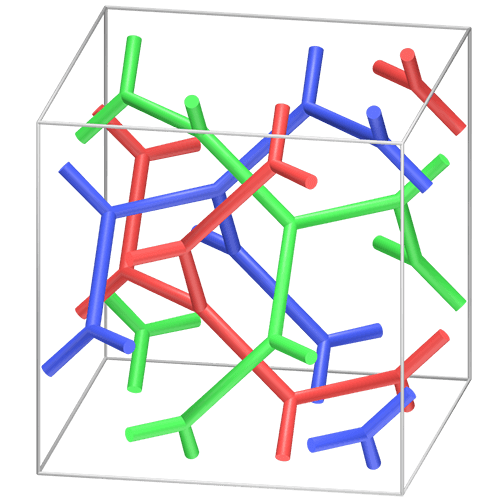}\\
        (b)
    \end{minipage}
    \caption{(a) A srs net. The orange line $l$ shows the line passing through the points $(0, 0, 8)$ and $(8, 8, 0)$. Note that this net is topologically the same as the \emph{srs-b} net (see~\cite{RCSR}). (b) A 3srs net. The $2\pi/3$ rotation around $l$ preserves the net.}
    \label{fig:srs}
\end{figure}

\begin{figure}[tbp]
    \centering
    \begin{minipage}{0.24\textwidth}
        \centering
        \includegraphics[width=1.00\textwidth]{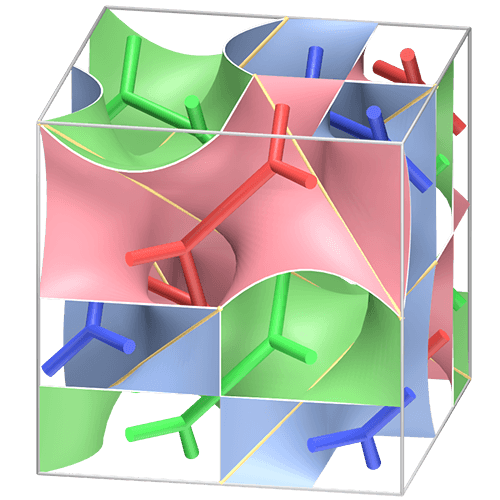}\\
        (a)
    \end{minipage}
    \begin{minipage}{0.24\textwidth}
        \centering
        \includegraphics[width=1.00\textwidth]{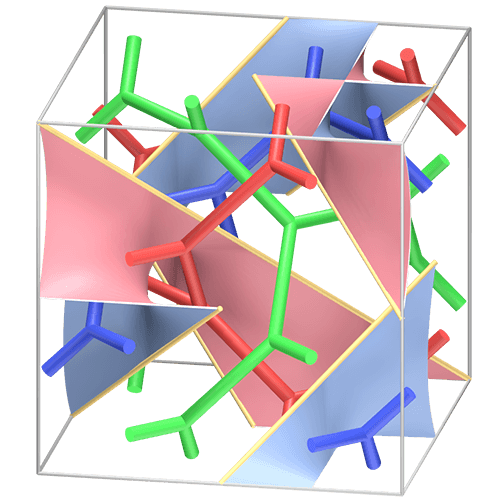}\\
        (b)
    \end{minipage}
    \begin{minipage}{0.24\textwidth}
        \centering
        \includegraphics[width=1.00\textwidth]{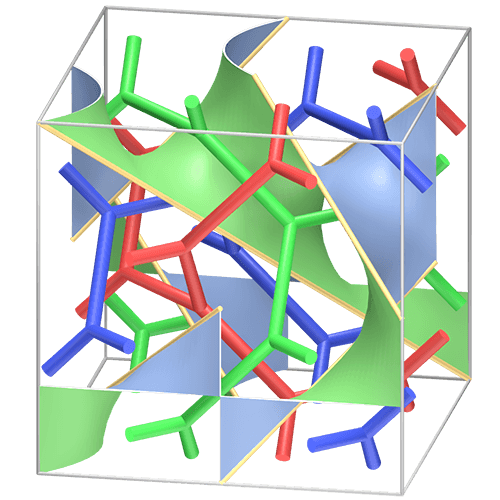}\\
        (c)
    \end{minipage}
    \begin{minipage}{0.24\textwidth}
        \centering
        \includegraphics[width=1.00\textwidth]{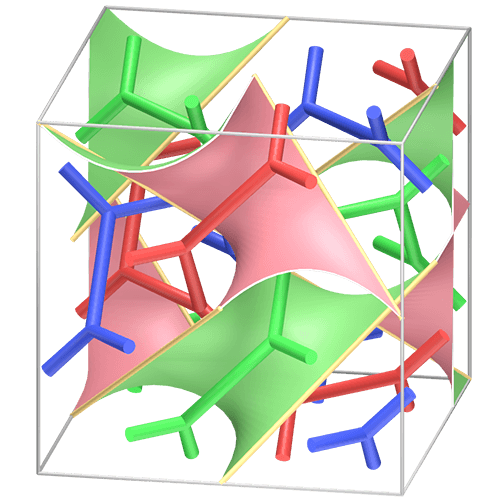}\\
        (d)
    \end{minipage}
    \caption{(a) A tricontinuous pattern corresponding to the 3srs net with a cubical fundamental domain. (b--d) Surfaces with boundary, each of which is shared by exactly two labyrinthine domains.}
    \label{fig:triconti-3srs}
\end{figure}

\begin{figure}[tbp]
    \centering
    \begin{minipage}[b]{0.3\linewidth}
        \centering
        \includegraphics[width=.95\textwidth]{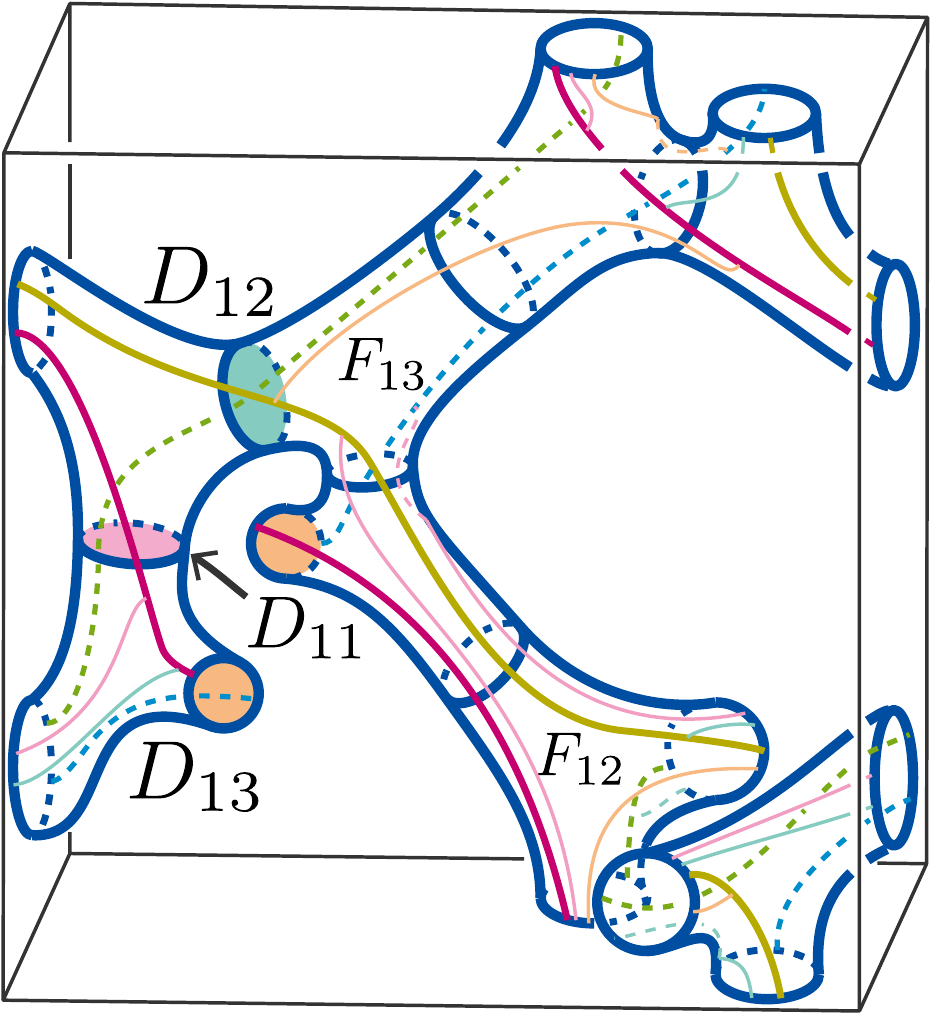}
        \subcaption{$H_1$}
    \end{minipage}
    \begin{minipage}[b]{0.3\linewidth}
        \centering
        \includegraphics[width=.95\textwidth]{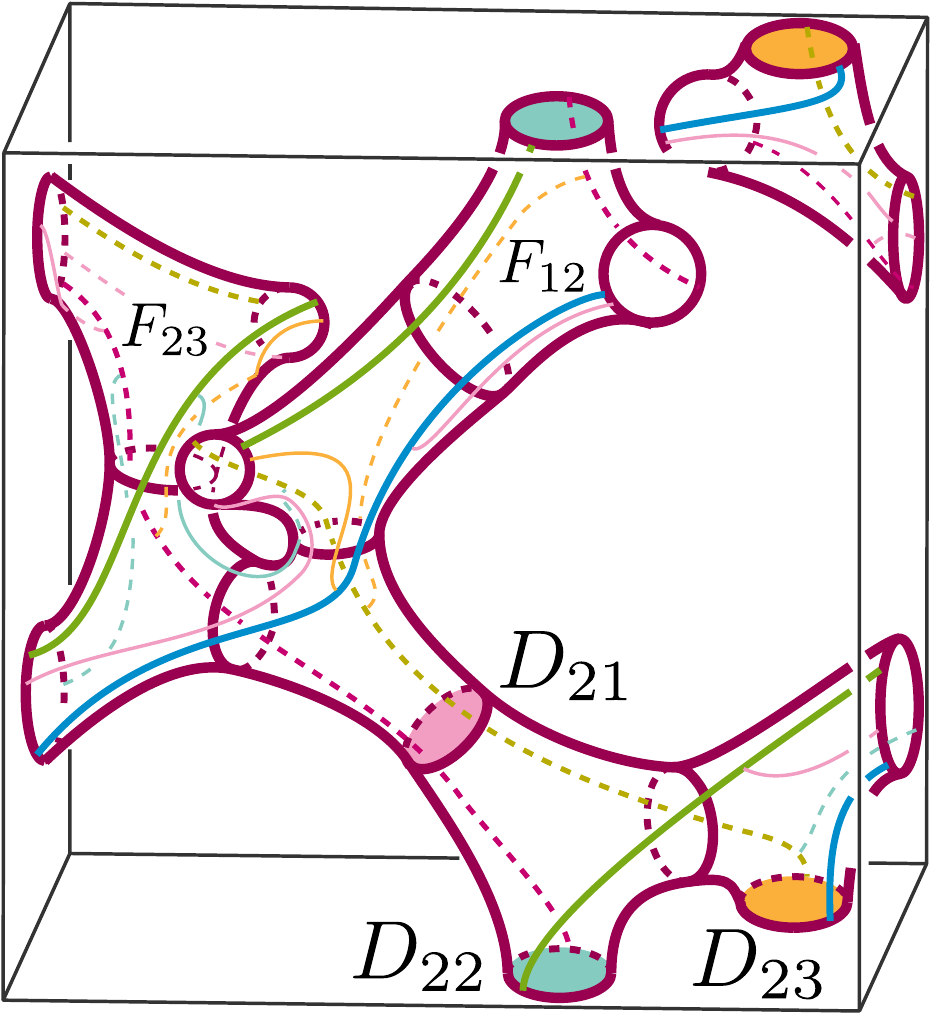}
        \subcaption{$H_2$}
    \end{minipage}
    \begin{minipage}[b]{0.3\linewidth}
        \centering
        \includegraphics[width=.95\textwidth]{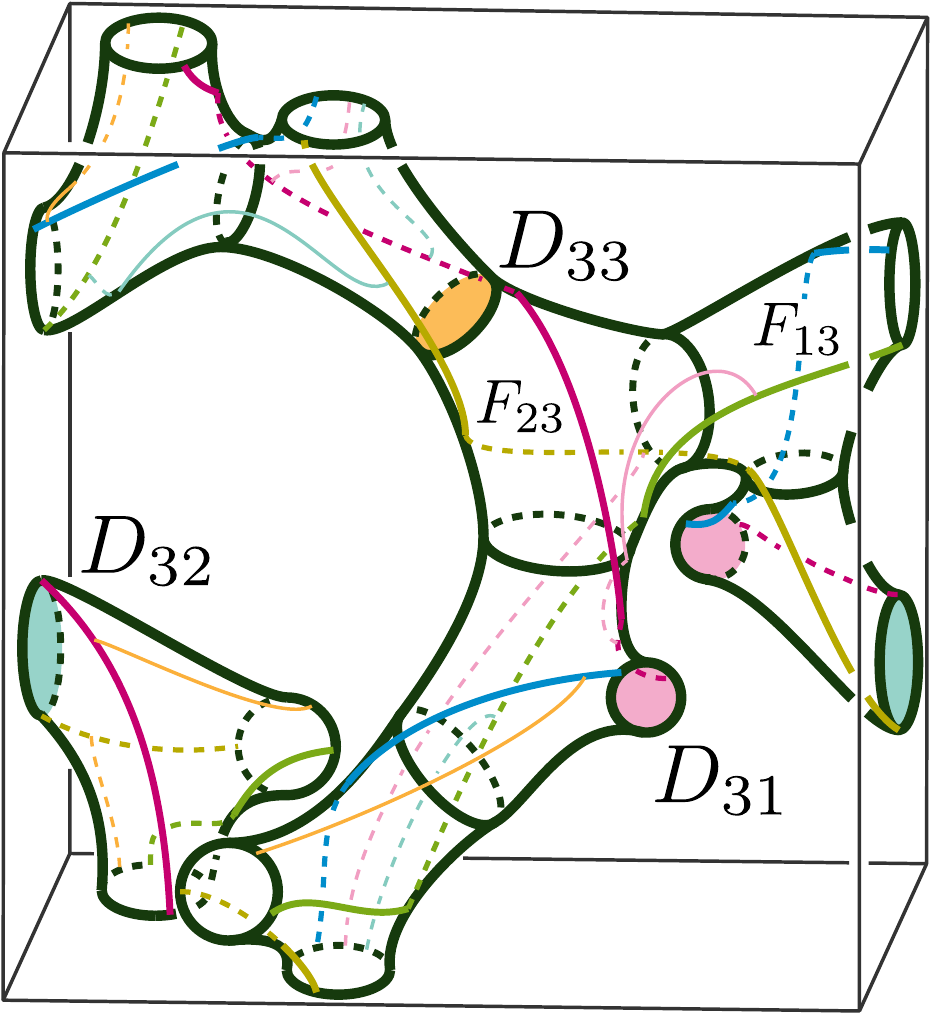}
        \subcaption{$H_3$}
    \end{minipage}
    \caption{A handlebody decomposition of $\Torus$ induced by the $3$srs pattern. The ``cores'' of handlebodies are the quotient of the 3srs net.
    The bold curves on the boundaries of handlebodies are the singular graph.
    Each $F_{ij}$ denotes a surface as in Remark~\ref{rem:cl_Hdecomp}.}
    \label{fig:3srs_decomp}
\end{figure}

\begin{thm}
    The $3$srs pattern can be destabilized to the hexagonal honeycomb pattern, 
    i.e., the $3$srs pattern can be obtained from the hexagonal honeycomb pattern by a finite sequence of type-$1$ stabilizations.
\end{thm}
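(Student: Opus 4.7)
The plan is to work at the level of handlebody decompositions of $T^3$ and then transfer the result back to patterns via Proposition~\ref{prop:frame}. By Corollary~\ref{cor:effective_pattern2decomp}, the $3$srs pattern descends to a simple proper type-$(5,5,5)$ handlebody decomposition $(H_1,H_2,H_3;P)$ of $T^3$, as depicted in Figure~\ref{fig:3srs_decomp}. The symmetric type-$(5,5,5)$ structure comes from the fact that each quotient labyrinthine net is the image of an srs net under $\pi$, which is a trivalent graph with first Betti number $5$ in a fundamental domain. The hexagonal honeycomb pattern likewise descends to the hexagonal honeycomb decomposition of type $(1,1,1)$, which is unique up to self-homeomorphism of $T^3$ by Proposition~\ref{prop:honeycomb_decomp}.

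First I would produce a sequence of type-$1$ destabilizations (together with auxiliary $0$-$2$, $2$-$0$, $2$-$3$, $3$-$2$ moves if needed to prepare appropriate disks) that reduces $(H_1,H_2,H_3;P)$ to some type-$(1,1,1)$ decomposition $(H_1^\star,H_2^\star,H_3^\star;P^\star)$. Concretely, because the $S^1$-symmetry of order $3$ of the $3$srs configuration permutes the three labyrinthine domains, it suffices to describe four destabilizing disks in one handlebody, say $H_3$, and transport the description by the symmetry to get four disks in $H_1$ and in $H_2$. Each such disk is a non-separating meridian disk of $H_3$ whose boundary crosses the singular graph of $P$ transversely in exactly two points, so that the type-$1$ destabilization of Definition~\ref{dfn:stabilization}(1) applies. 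The concrete choice is read off from Figure~\ref{fig:3srs_decomp}: along each srs net, four of its five independent $1$-cycles are ``excess'' in the sense that they can be unlinked from the partition by a meridian disk that meets exactly two sheets of the branch locus; these are the candidate destabilizing disks. After performing these twelve type-$1$ destabilizations, the resulting decomposition has all three handlebodies of genus $1$.

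Next I would apply Proposition~\ref{prop:honeycomb_decomp} to conclude that the destabilized decomposition is equivalent, via a self-homeomorphism of $T^3$, to the hexagonal honeycomb decomposition. Lifting this equivalence through the universal covering $\pi\colon\R^3\to T^3$ via Proposition~\ref{prop:frame}, I obtain that the simple $3$-colored framed pattern obtained by destabilizing the $3$srs pattern twelve times is equivalent to the hexagonal honeycomb pattern as colored framed patterns. To close the argument, it is essential to verify that each of the handlebody-level type-$1$ destabilizations is realized by a type-$1$ destabilization of patterns in the sense of Definition~\ref{dfn:pattern_stabilization}: that is, each destabilizing disk in $H_i$ can be isotoped so that it lifts under $\pi$ to a properly embedded lifted disk whose boundary arc connects two distinct labyrinthine domains of the appropriate colors. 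The required lifting property holds for disks that intersect the singular graph in only two points and whose boundaries are short, so one can arrange it by an initial isotopy together with, if necessary, a $2$-$3$ (or $3$-$2$) move to separate the two crossing points.

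The main obstacle I expect is the explicit construction of the destabilizing disks and the verification that each of them crosses the singular graph in exactly two points and is non-separating. One cannot destabilize arbitrary handles: the disks must be chosen so that after each destabilization the decomposition remains simple and proper, and so that the ambient pattern downstairs is still $3$-colored with compatible frame. Managing this book-keeping (so that the twelve destabilizations can be done in sequence without obstructing each other) is the technical heart of the argument, while the final identification with the hexagonal honeycomb pattern is then a formal consequence of Propositions~\ref{prop:honeycomb_decomp} and~\ref{prop:frame}.
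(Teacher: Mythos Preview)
Your overall strategy---descend to a handlebody decomposition of $T^3$, perform type-$1$ destabilizations down to type $(1,1,1)$, then lift back---is exactly what the paper does. However, there is a genuine gap: the theorem asserts that the $3$srs pattern is reached from the honeycomb pattern by a finite sequence of \emph{type-$1$ stabilizations only}. Your proposal explicitly allows ``auxiliary $0$-$2$, $2$-$0$, $2$-$3$, $3$-$2$ moves if needed to prepare appropriate disks'' and later ``if necessary, a $2$-$3$ (or $3$-$2$) move.'' If any such move is actually used, you have not proved the statement. The paper's proof uses no moves whatsoever: it exhibits twelve explicit meridian disks, each meeting the singular graph in exactly two points, and performs twelve type-$1$ destabilizations and nothing else. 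So you must commit to finding disks that require no preparation.

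A second, smaller discrepancy concerns the structure of the twelve disks. You propose four disks in each $H_i$, transported by the order-$3$ symmetry. The paper instead takes three disks $D_{i1},D_{i2},D_{i3}$ in each $H_i$ (nine total, mutually disjoint, chosen symmetrically) to reach a type-$(2,2,2)$ decomposition, and then finds the last three disks $D_4,D_5,D_6$ sequentially---one in each handlebody---in the intermediate decompositions $(2,2,2)\to(2,2,1)\to(2,1,1)\to(1,1,1)$, breaking the symmetry. The fact that the paper does not exhibit a fourth symmetric disk in the original $H_i$ suggests that your fully symmetric scheme may be harder to realize; at the very least you would have to justify that four disjoint non-separating disks with the required two-point boundary condition exist simultaneously in each genus-$5$ handlebody.

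Finally, your identification step via Proposition~\ref{prop:honeycomb_decomp} followed by Proposition~\ref{prop:frame} is a legitimate alternative to the paper's direct visual identification of the resulting $(1,1,1)$ decomposition with the honeycomb. Just note that Proposition~\ref{prop:honeycomb_decomp} produces a self-homeomorphism of $T^3$ without specifying orientation, whereas Proposition~\ref{prop:frame} is sensitive to whether the frames have the same orientation; you would need one extra line to handle this. The paper sidesteps this by observing directly from the figures that the final decomposition \emph{is} the honeycomb decomposition, and then invoking Definition~\ref{dfn:pattern_stabilization} to lift each handlebody-level type-$1$ destabilization to a pattern-level one.
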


\begin{proof}
    Let $\wtP$ be the $3$srs pattern, and $\pi$ its frame obtained from a cubic fundamental domain as shown in Figure~\ref{fig:triconti-3srs}.
    Put $P = \pi(\wtP)$.
    Figure~\ref{fig:3srs_decomp} shows a simple proper type-$(5, 5, 5)$ handlebody decomposition $(H_1, H_2, H_3; P)$ of $\Torus$ induced by $\wtP$.
    We denote by $F_{12}$, $F_{13}$, and $F_{23}$ surfaces with boundary as in Remark~\ref{rem:cl_Hdecomp}.
    By Definition~\ref{dfn:pattern_stabilization}, if we destabilize the decomposition to the hexagonal honeycomb decomposition by performing a finite sequence of type-$1$ destabilizations, then we can also destabilize $\wtP$ to the hexagonal honeycomb pattern by corresponding destabilizations.

    First, for each $i$, we take three meridian disks $D_{i1}$, $D_{i2}$, and $D_{i3}$ of the handlebody $H_i$ as shown in Figure~\ref{fig:3srs_decomp}(a--c).
    Each disk intersects the singular graph of $P$ transversely exactly two points.
    Furthermore, any two different disks are disjoint.
    Hence, we can perform type-$1$ destabilizations along them.
    By this operation, we obtain a type-$(2,2,2)$ handlebody decomposition of $\Torus$ (see Figure~\ref{fig:222}(a)).
    For simplicity, we denote each handlebody and the partition of the destabilized handlebody decomposition by the same symbol $H_1$, $H_2$, $H_3$, and $P$, respectively.
    Note that the preimage of the union of spines of $H_1$, $H_2$, and $H_3$ is isotopic to a \emph{$3$hcb} net as shown in Figure~\ref{fig:222}(b).
    The destabilized \patt is also a simple colored tricontinuous pattern.

    For the type-$(2,2,2)$ handlebody decomposition, we can perform a type-$1$ destabilization along a meridian disk $D_4$ of $H_3$ (see Figure~\ref{fig:222}(a) and~\ref{fig:seq_to_honeycomb} (a)).
    The type of resulting decomposition is $(2, 2, 1)$.
    Figures~\ref{fig:seq_to_honeycomb} (a--e) illustrate a destabilization to the type-$(2,2,1)$ handlebody decomposition, which produces a type-$(1,1,1)$ handlebody decomposition.
    The type-$(1, 1, 1)$ handlebody decomposition illustrated in Figure~\ref{fig:3srs_decomp}(f) is the hexagonal honeycomb decomposition (see Figure~\ref{fig:nxx}(d)).
\end{proof}

\begin{figure}[htbp]
    \centering
    \begin{minipage}[b]{0.32\linewidth}
        \centering
        \includegraphics[width=0.90\textwidth]{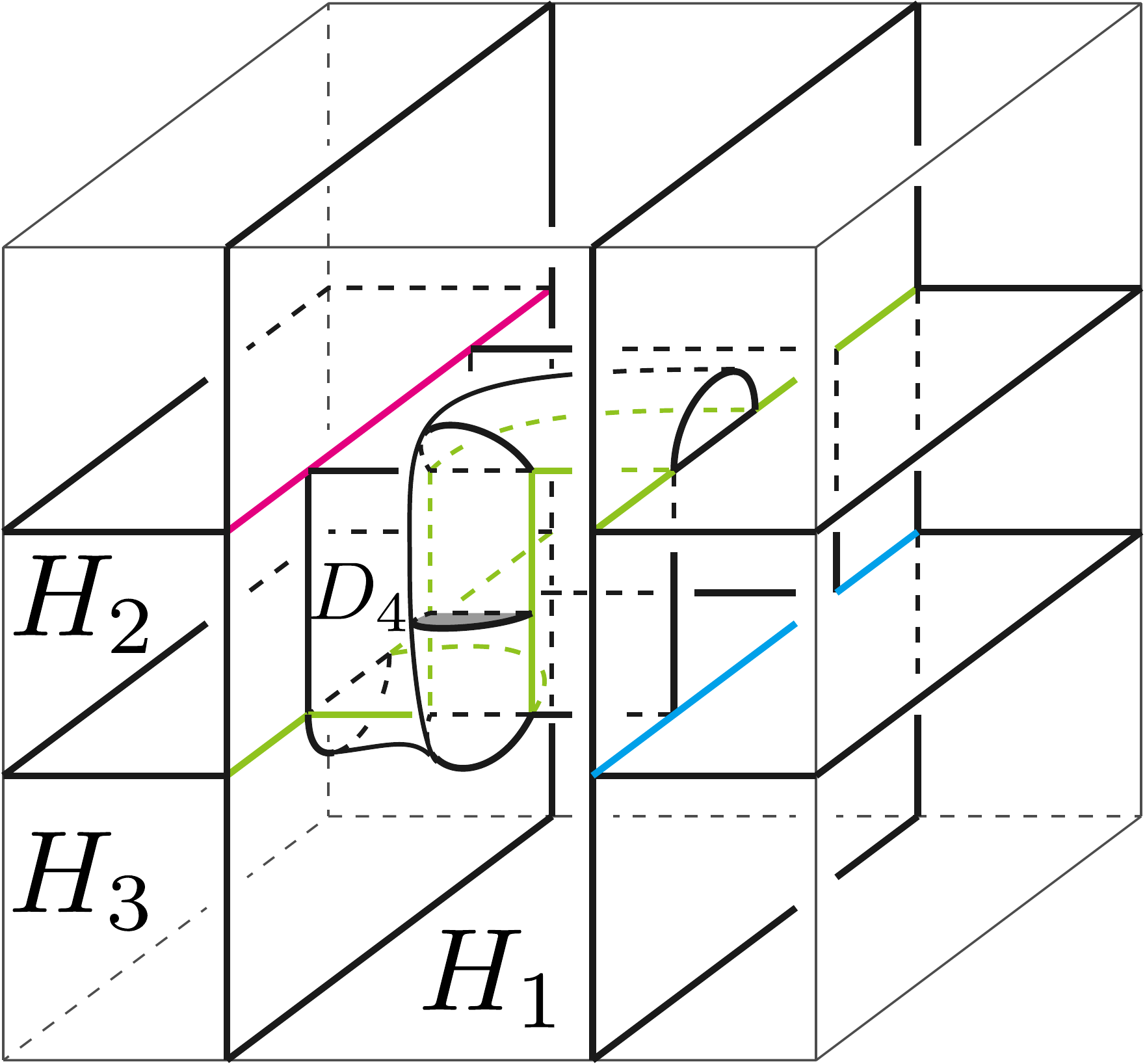}\\
        (a)
    \end{minipage}
    \begin{minipage}[b]{0.32\linewidth}
        \centering
        \includegraphics[width=0.80\textwidth]{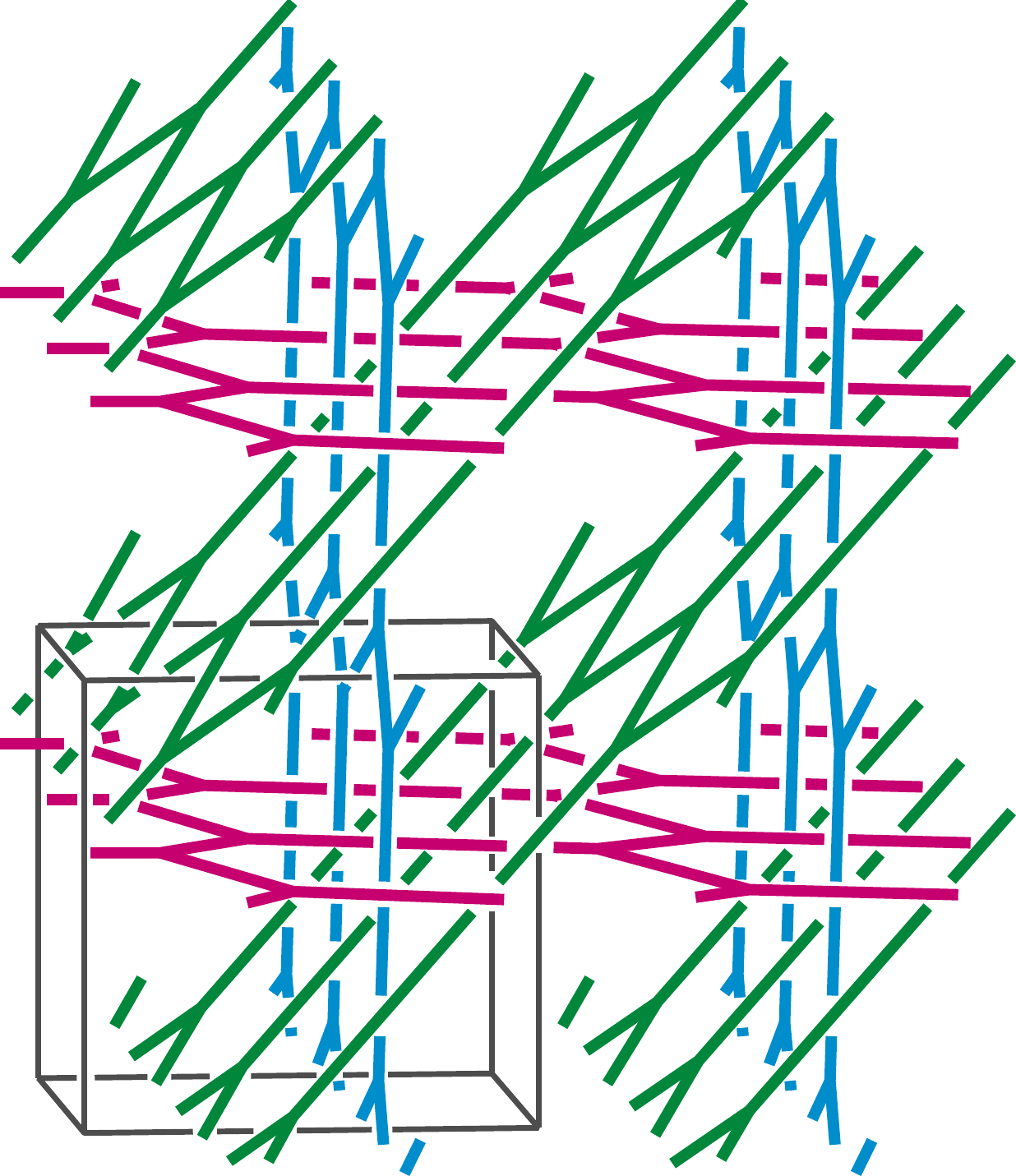}\\
        (b)
    \end{minipage}
    \caption{(a) A type-$(2, 2, 2)$ handlebody decomposition of $\Torus$. (b) A 3hcb net that is the preimage by the universal covering map of the core of the handlebodies $H_1$, $H_2$, and $H_3$.}
    \label{fig:222}
\end{figure}

\begin{figure}[htbp]
    \centering
    \begin{minipage}[b]{0.32\linewidth}
        \centering
        \includegraphics[width=.90\textwidth]{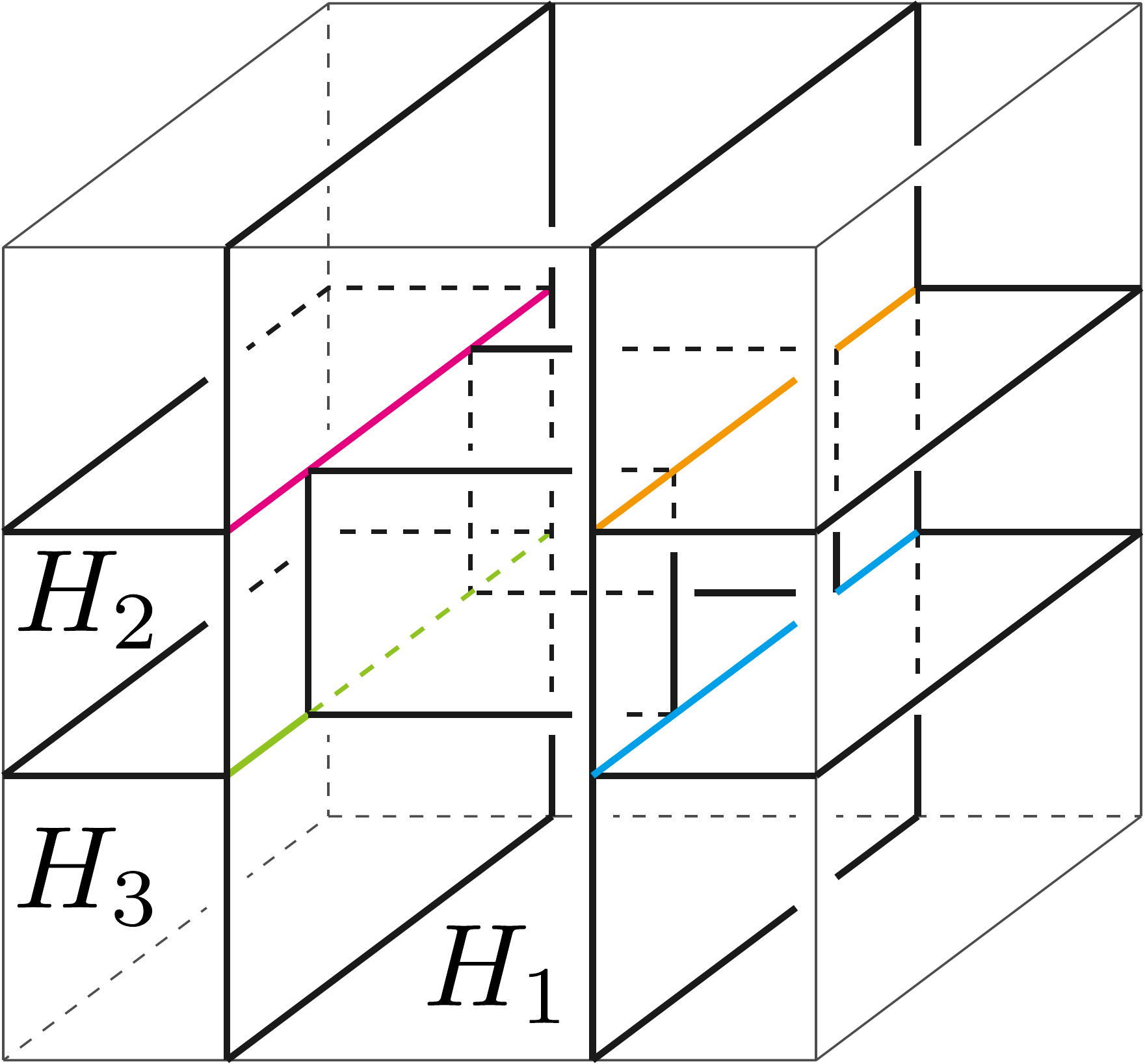}
        \subcaption{Type-$(2,2,1)$}
    \end{minipage}
    \begin{minipage}[b]{0.32\linewidth}
        \centering
        \includegraphics[width=.90\textwidth]{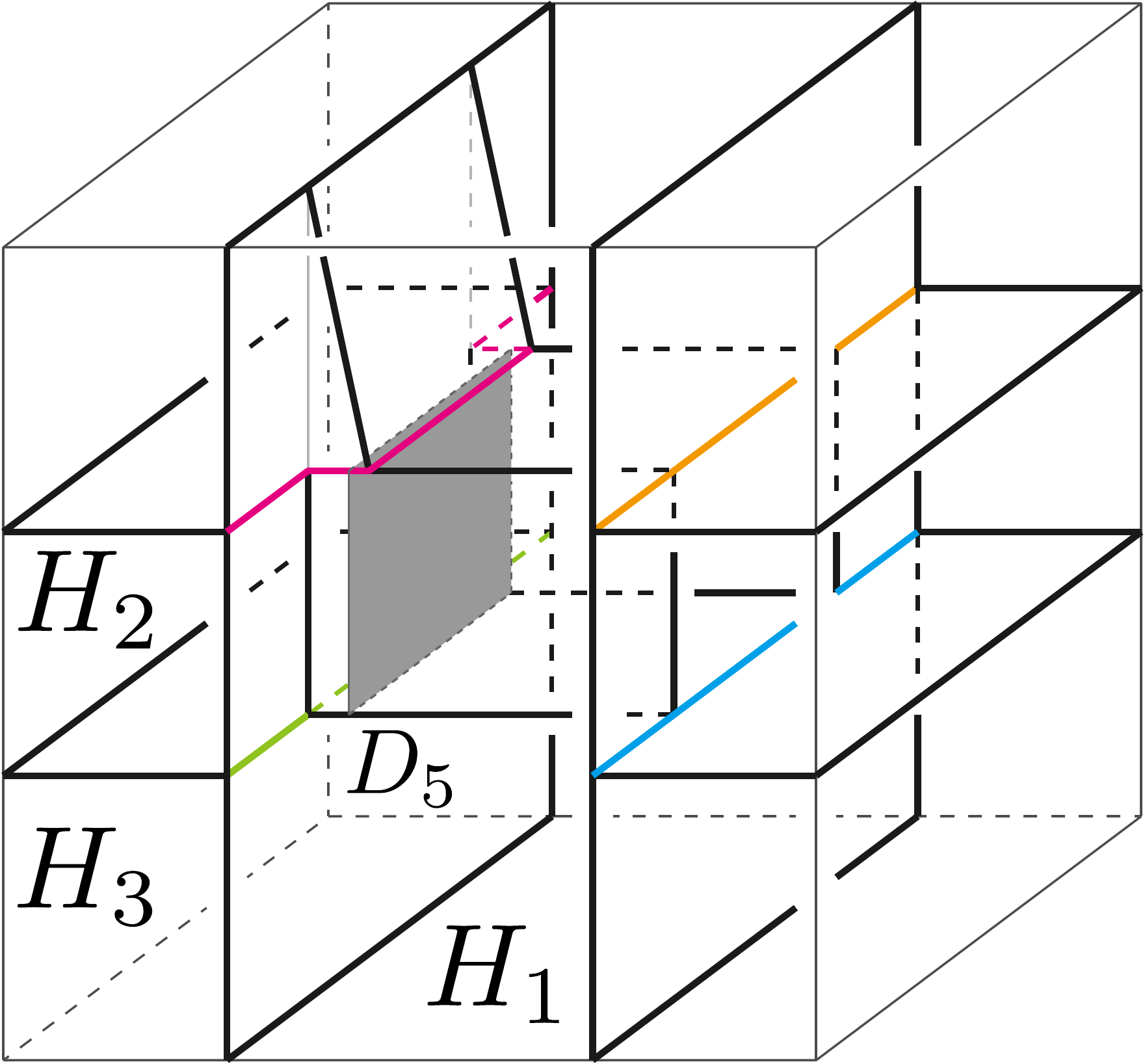}
        \subcaption{A meridian disk $D_5$ of $H_2$}
    \end{minipage}
    \begin{minipage}[b]{0.32\linewidth}
        \centering
        \includegraphics[width=.90\textwidth]{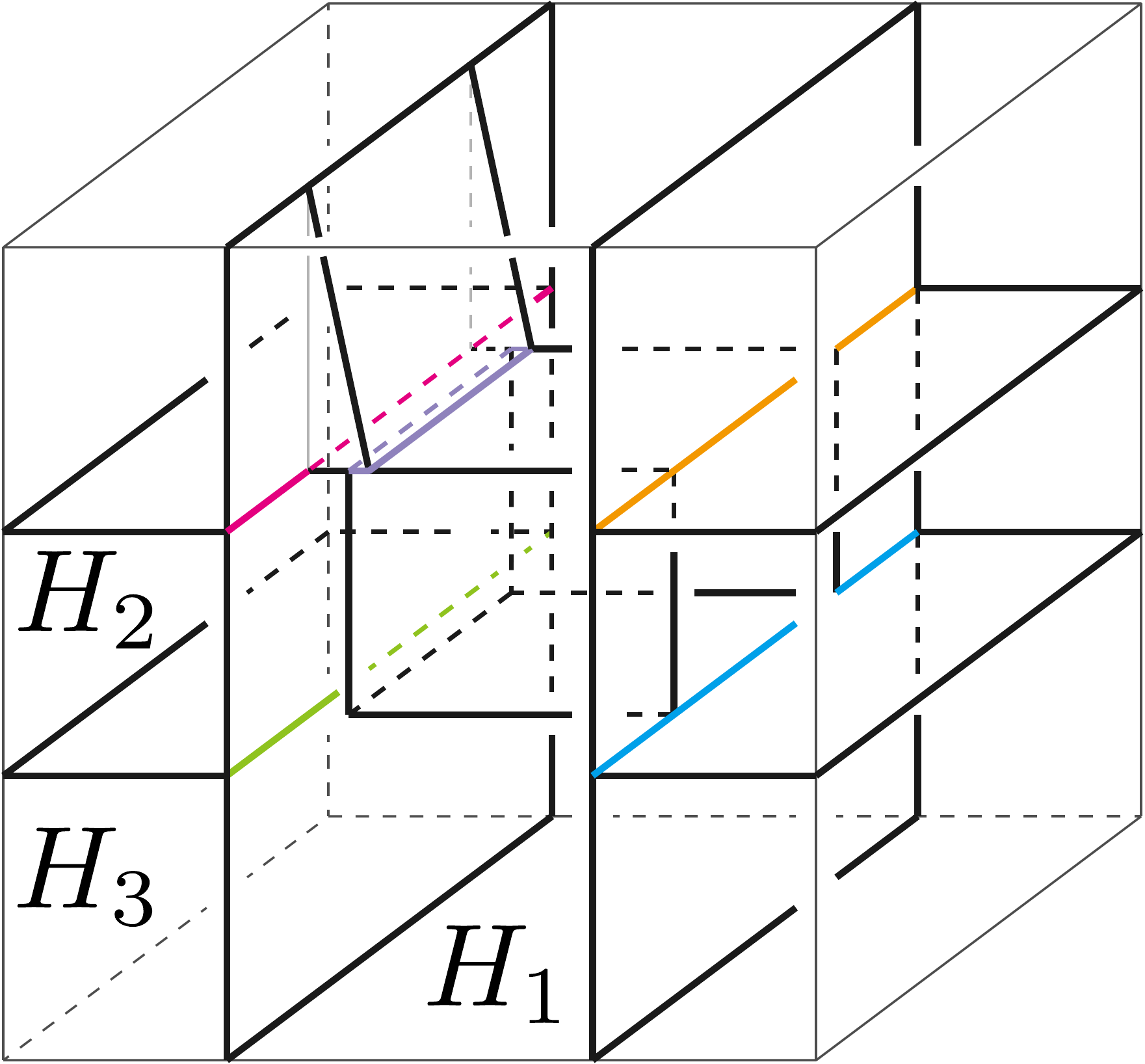}
        \subcaption{Type-$(2,1,1)$}
    \end{minipage}

    \begin{minipage}[t]{0.32\linewidth}
        \centering
        \includegraphics[width=.90\textwidth]{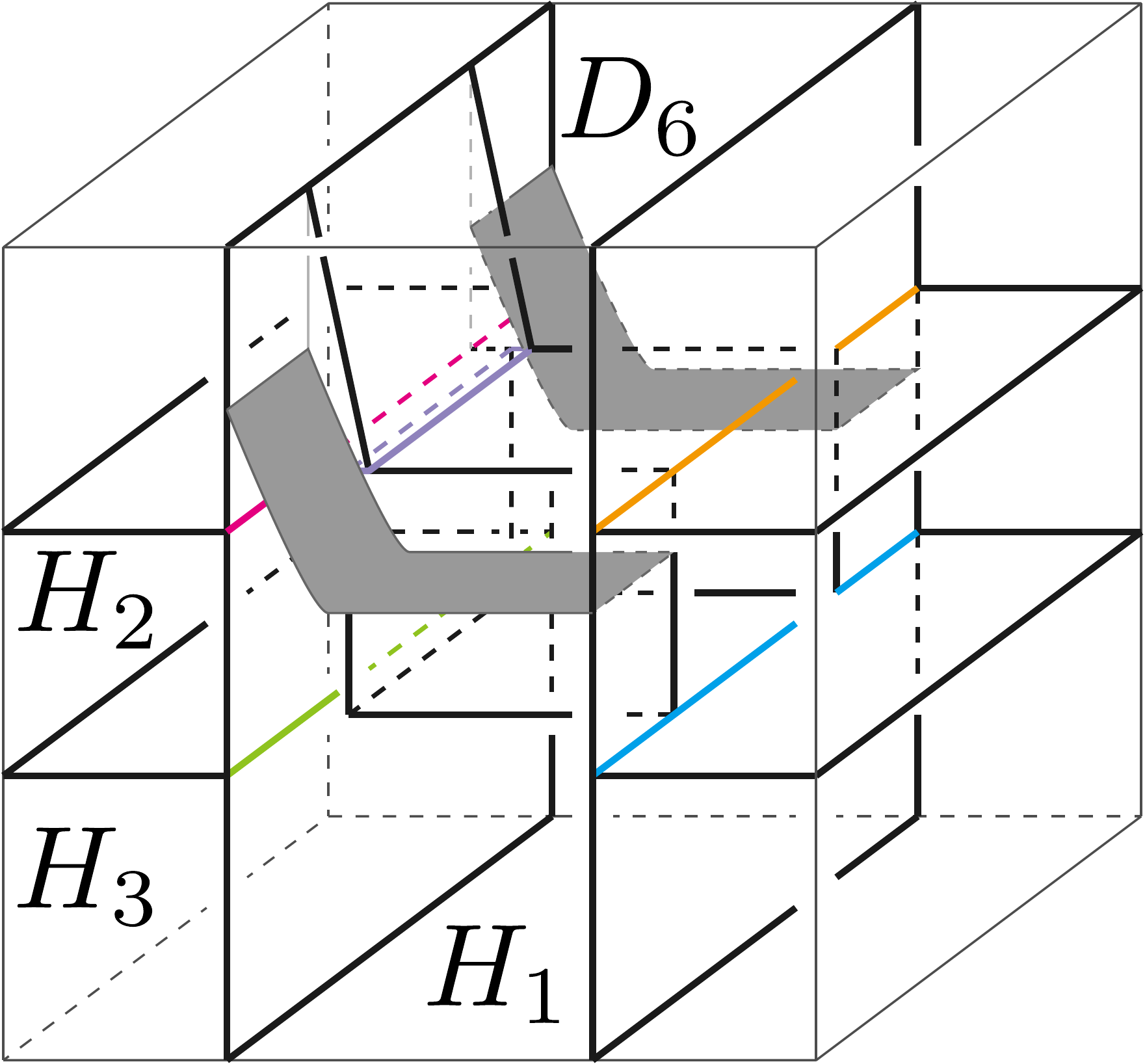}
        \subcaption{A meridian disk $D_6$ of $H_1$}
    \end{minipage}
    \begin{minipage}[t]{0.32\linewidth}
        \centering
        \includegraphics[width=.90\textwidth]{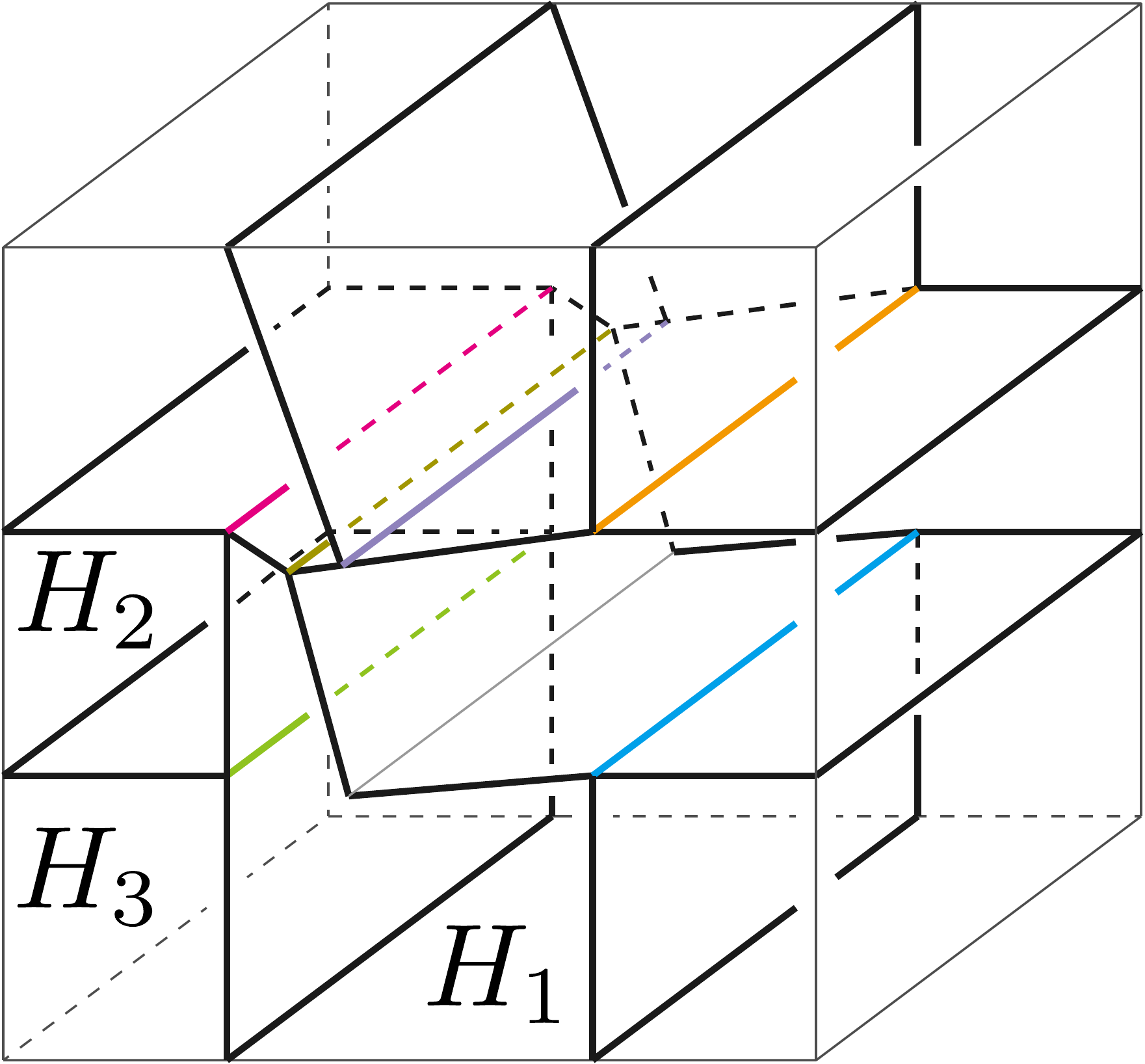}
        \subcaption{Type-$(1,1,1)$}
    \end{minipage}
    \begin{minipage}[t]{0.32\linewidth}
        \centering
        \includegraphics[width=.90\textwidth]{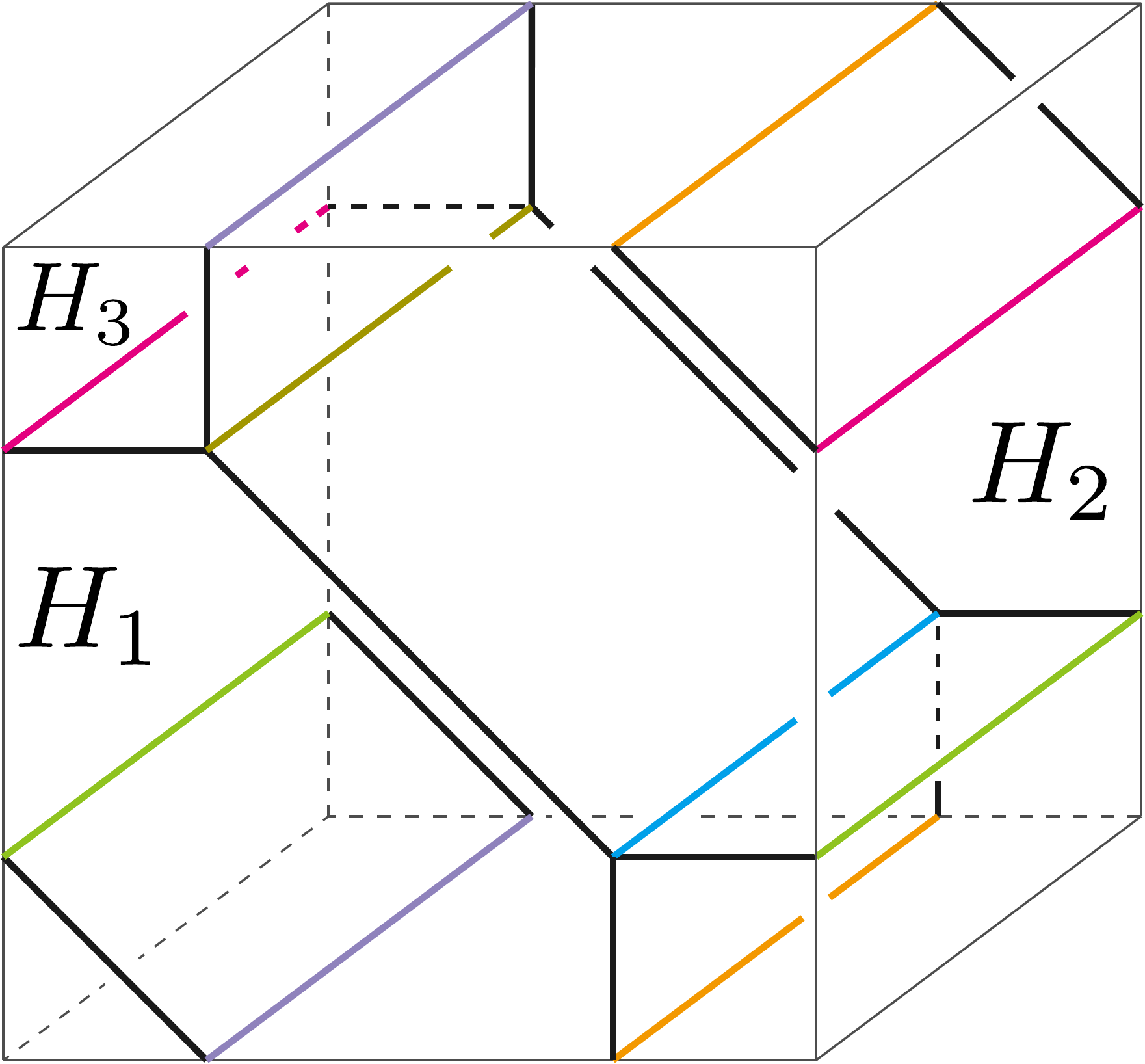}
        \subcaption{The hexagonal honeycomb decomposition}
    \end{minipage}
    \caption{A sequence of type-$1$ destabilizations from the type-$(2,2,1)$ handlebody decomposition to the type-$(1,1,1)$ handlebody decomposition.}
    \label{fig:seq_to_honeycomb}
\end{figure}

\section{Characterization of patterns}
\label{sec:classification_patterns}

In this section, we will prove that bicontinuous patterns are unique.
We will also show that simple, colored, framed \patts of type $(1, 1, 1)$ are unique.
On the other hand, we will provide two different simple colored \patts of type $(1, 1, 1, 1)$.

\subsection{Bicontinuous patterns and Heegaard splittings of $\Torus$}


By definition, an $n$-continuous pattern consists of precisely $n$ labyrinthine domains, and it is proper.
Hence, by assigning a different color to each domain, the pattern admits an $n$-coloring.
In general, a frame of the pattern is not compatible with the coloring.
However, by expanding the fundamental domain, we can obtain a frame compatible with the coloring.
Then, by Corollary~\ref{cor:effective_pattern2decomp}, the pattern with the frame gives a proper type-$(g_1, \ldots, g_n)$ handlebody decomposition of $\Torus$.
Hence, the pattern is a framed \patt of type $(g_1, \ldots, g_n)$.
In particular, we note the following for each simple bicontinuous pattern and such a frame.

\begin{rem}\label{rem:biconti}
    Any simple bicontinuous pattern and its frame compatible with a coloring induce a Heegaard splitting of $\Torus$.
\end{rem}

By~\cite{Boileau,FH1989}, Heegaard splittings of $\Torus$ are determined by their Heegaard genera.
Hence, we can prove the uniqueness of bicontinuous patterns.

%
%

\begin{thm}
\label{thm:bicontinuous}
    Any two simple bicontinuous patterns are equivalent.
\end{thm}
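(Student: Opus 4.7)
My plan is to reduce the theorem to the uniqueness-by-genus classification of Heegaard splittings of $\Torus$ due to Boileau and Frohman-Hass, via the correspondence recorded in Remark~\ref{rem:biconti}.

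First, I equip both $\wtP$ and $\wt Q$ with their canonical $2$-colorings (one color per labyrinthine domain) and choose frames compatible with these colorings, enlarging the fundamental domains if necessary so that every covering transformation preserves colors. I then apply Lemma~\ref{lem:adjast_frame} to replace $\wt Q$ by an isotopic \patt carrying the same frame $\pi$ as $\wtP$. By Remark~\ref{rem:biconti}, the quotients $\pi(\wtP)$ and $\pi(\wt Q)$ are Heegaard surfaces of $\Torus$, inducing Heegaard splittings of some genera $g_P$ and $g_Q$.

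Next, I arrange $g_P = g_Q$. For bicontinuous patterns only type-$0$ stabilizations of handlebody decompositions are available (types $1$ and $2$ require at least three colors), and these coincide with ordinary Heegaard stabilizations. Applying the stable-equivalence Theorem~\ref{thm:stabeq} (or a suitable common refinement of the lattice), I make the two induced Heegaard splittings share a common genus. Then the Boileau-Frohman-Hass theorem~\cite{Boileau,FH1989} supplies an orientation-preserving self-homeomorphism $f$ of $\Torus$ carrying $\pi(\wtP)$ to $\pi(\wt Q)$ and matching the two handlebodies on either side (possibly after swapping the two colors). Because each color class in a bicontinuous decomposition consists of a single handlebody, $f$ is automatically compatible with the colorings, so Proposition~\ref{prop:frame} converts $f$ into an ambient isotopy of $\Rth$ witnessing the equivalence of $\wtP$ and $\wt Q$ as colored framed patterns.

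The chief obstacle is the genus-matching step: a priori the two induced Heegaard splittings can have different genera, and uniformly refining the common frame scales both genera by the same covering index, so a single lattice refinement does not by itself bridge the gap. The reconciliation therefore hinges on combining lattice refinement with the type-$0$ stabilizations provided by Theorem~\ref{thm:stabeq} in order to reach a common genus, after which the Boileau-Frohman-Hass classification in $\Torus$ can be transported back to an ambient isotopy in $\Rth$ via Proposition~\ref{prop:frame}.
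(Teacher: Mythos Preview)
Your genus-matching step has a genuine gap. The type-$0$ stabilizations you invoke (whether via Theorem~\ref{thm:stabeq} or Definition~\ref{dfn:pattern_stabilization}) alter the pattern itself, not merely its frame: after stabilizing you obtain new bicontinuous patterns $\wtP_1$ and $\wt Q_1$, and showing $\wtP_1$ equivalent to $\wt Q_1$ says nothing about the original $\wtP$ and $\wt Q$. You would need to know in advance that a bicontinuous pattern is equivalent to its own stabilization, which is precisely the theorem you are trying to prove.

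You also correctly observe that refining a \emph{common} frame rescales both genera by the same index and therefore cannot equalize them. What you are missing is that nothing forces you to use a common frame: Proposition~\ref{prop:frame} allows the two patterns to carry \emph{different} frames. The paper exploits this asymmetrically. If $(\wtP,\pi)$ has type $(g,g)$ and $(\wt Q,\pi\upr)$ has type $(g\upr,g\upr)$, pass to a sublattice of index $g\upr-1$ for $\wtP$ (replace one translation generator $t_1$ by $t_1^{\,g\upr-1}$) and, independently, a sublattice of index $g-1$ for $\wt Q$. Multiplicativity of the Euler characteristic under finite covers then makes both quotient surfaces closed of genus $(g-1)(g\upr-1)+1$. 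Neither pattern is touched --- only the frames are refined --- so the Boileau--Otal uniqueness theorem for Heegaard splittings of $\Torus$ together with Proposition~\ref{prop:frame} yields an ambient isotopy in $\Rth$ between the original $\wtP$ and $\wt Q$.
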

\begin{proof}
    Let $(\wtP, \pi)$ and $(\wtP\upr, \pi\upr)$ be bicontinuous patterns of types $(g, g)$ and $(g\upr, g\upr)$, respectively.
    For the frame $\pi$, there exists a basis $\langle \mathbf{a}_1, \mathbf{a}_2, \mathbf{a}_3 \rangle$ of $\R^3$ such that the translations $t_i$ defined by the vectors $\mathbf{a}_i$ generate the covering transformation group.
    We denote by $T$ a group generated by translations $t_1^{g\upr - 1}$, $t_2$, and $t_3$.
    Hence we have a covering map $\rho: \R^3 \to \R^3/T \cong \Torus$.
    Since the Euler characteristic of $\wtP/\rho$ is $(g\upr - 1)$-times that of $\wtP/\pi$, the surface $\wtP/\rho$ gives a Heegaard splitting of $\Torus$ of genus $(g - 1)(g\upr - 1) + 1$.
    Similarly, we can take a covering map $\rho\upr: \R^3 \to \Torus$ so that $\wtP\upr/ \rho\upr$ also gives a Heegaard splitting of genus $(g - 1)(g\upr - 1) + 1$.
    Therefore, by using~\cite[Th{\'e}or{\`e}me]{Boileau} and Proposition~\ref{prop:frame}, the two simple bicontinuous patterns are equivalent.
\end{proof}


The Gyroid, the Schwartz D surface, and the Schwartz P surface are famous triply periodic minimal surfaces that decompose $\R^3$ into precisely two open components (see~\cite{Schoen}), i.e., the surfaces are simple bicontinuous patterns.
In~\cite[APPENDIX]{Squires2005}, Squires, Templer, Seddon, and Woenkhaus gave an isotopy from the Gyroid to the Schwartz D surface and the Schwartz D surface to the Schwartz P surface by an explicit formula.
Note that Theorem~\ref{thm:bicontinuous} is a generalization of the result but does not give a formula for transformation between patterns.

\subsection{The uniqueness of framed patterns of type $(1, 1, 1)$}

We consider the hexagonal honeycomb pattern introduced in Example~\ref{ex:honeycomb_pattern}.
Recall that its pattern admits a coloring and a frame compatible with it, as in Figure~\ref{fig:colored_patterns}(a).
The pattern induces the hexagonal honeycomb decomposition of $\Torus$.
Hence, the hexagonal honeycomb pattern with the frame is of type $(1, 1, 1)$.
By Propositions~\ref{prop:honeycomb_decomp} and~\ref{prop:frame}, the hexagonal honeycomb pattern is a canonical model of simple colored \patts of type $(1, 1, 1)$.
Therefore, we have the following.

\begin{thm}\label{thm:honeycomb}
    Any simple, colored, framed \patt of type $(1, 1, 1)$ is equivalent to the hexagonal honeycomb pattern.
\end{thm}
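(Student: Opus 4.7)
The plan is to reduce the statement to Proposition~\ref{prop:honeycomb_decomp} and then apply Proposition~\ref{prop:frame}, with a little work required only to match orientations and to reshuffle the colors.

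Let $(\wtP,\pi)$ be a simple, colored, framed \patt of type $(1,1,1)$, and let $(\wtP_{\mathrm{hc}},\pi_{\mathrm{hc}})$ be the hexagonal honeycomb pattern of Example~\ref{ex:honeycomb_pattern} with its standard frame. By Corollary~\ref{cor:effective_pattern2decomp}, the images $P=\pi(\wtP)$ and $P_{\mathrm{hc}}=\pi_{\mathrm{hc}}(\wtP_{\mathrm{hc}})$ are partitions of simple proper type-$(1,1,1)$ handlebody decompositions $(H_1,H_2,H_3;P)$ and $(H_1^{\mathrm{hc}},H_2^{\mathrm{hc}},H_3^{\mathrm{hc}};P_{\mathrm{hc}})$ of $\Torus$, and because the pattern is of type $(1,1,1)$ with three colors, the labyrinthine domains mapping to $H_i$ form exactly one color class for each $i$ (after relabelling colors, say $H_i$ has color $i$).

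Next I would invoke Proposition~\ref{prop:honeycomb_decomp}: there is a self-homeomorphism $f\colon\Torus\to\Torus$ sending $P$ to $P_{\mathrm{hc}}$. This $f$ permutes the three handlebodies of the decomposition, so $f(H_i)=H_{\sigma(i)}^{\mathrm{hc}}$ for some permutation $\sigma\in\mathfrak{S}_3$. The coloring hypothesis of Proposition~\ref{prop:frame} is then satisfied automatically after permuting the three colors of $\wtP_{\mathrm{hc}}$ by $\sigma$, because in type $(1,1,1)$ the color of a labyrinthine domain is determined by which of the three handlebodies it projects into.

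The only remaining issue is orientation: Proposition~\ref{prop:frame} requires that $f$ is orientation-preserving or orientation-reversing according as $\pi$ and $\pi_{\mathrm{hc}}$ have the same or different orientations. If the orientation of $f$ does not match this, I would compose $f$ with a symmetry of $P_{\mathrm{hc}}$ of the opposite orientation. Such a symmetry is easily produced from the hexagonal structure: the product with $S^1$ of a reflection of $T^2$ preserving the decomposition of $T^2$ into three hexagons, for example, is an orientation-reversing self-homeomorphism of $\Torus$ that carries $P_{\mathrm{hc}}$ to itself (again at the cost of a further permutation of the colors, which only refines the permutation $\sigma$ above). After this adjustment, all hypotheses of Proposition~\ref{prop:frame} are met, and that proposition yields the equivalence of $(\wtP,\pi)$ with $(\wtP_{\mathrm{hc}},\pi_{\mathrm{hc}})$.

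The main thing to get right is the bookkeeping of the orientation and color-permutation step; everything topological is already done by Propositions~\ref{prop:honeycomb_decomp} and~\ref{prop:frame}, so there is no substantive further obstacle.
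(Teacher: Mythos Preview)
Your proposal is correct and follows exactly the route the paper takes: the paper's argument is simply ``By Propositions~\ref{prop:honeycomb_decomp} and~\ref{prop:frame}'', and you have unpacked precisely this, with Corollary~\ref{cor:effective_pattern2decomp} supplying the passage from patterns to decompositions. Your orientation-matching step (composing with a reflection symmetry of the hexagonal honeycomb decomposition) is a detail the paper leaves implicit but which is indeed needed to verify the hypothesis of Proposition~\ref{prop:frame}; your handling of it is fine.
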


Note that a simple $3$-colored \patt whose labyrinthine nets consist of lines is not necessarily equivalent to the hexagonal honeycomb pattern in general (see Example~\ref{ex:non_honeycomb}).
Also, there are distinct simple colored \patts of type $(1, 1, 1, 1)$ (see Example~\ref{ex:1111}).

\begin{ex}\label{ex:non_honeycomb}
    We consider a tessellation of the plane $\R^2$ by three kinds of tiles: square, hexagon, and 8-sided polygon.
    Figure~\ref{fig:11_11_1_1} shows a \patt induced by the tessellation.
    The left side illustrates a framed \patt of type $(1,1,1)$ that is not colored since 8-sided components are assigned to the same color, and they are adjacent.
    On the other hand, the pattern admits a $4$-coloring (Figure~\ref{fig:11_11_1_1}(b)).
    However, it is no longer type $(1, 1, 1)$.
\end{ex}

\begin{figure}[htbp]
    \centering
    \begin{minipage}[b]{0.49\textwidth}
        \centering
        \includegraphics[width=.90\textwidth]{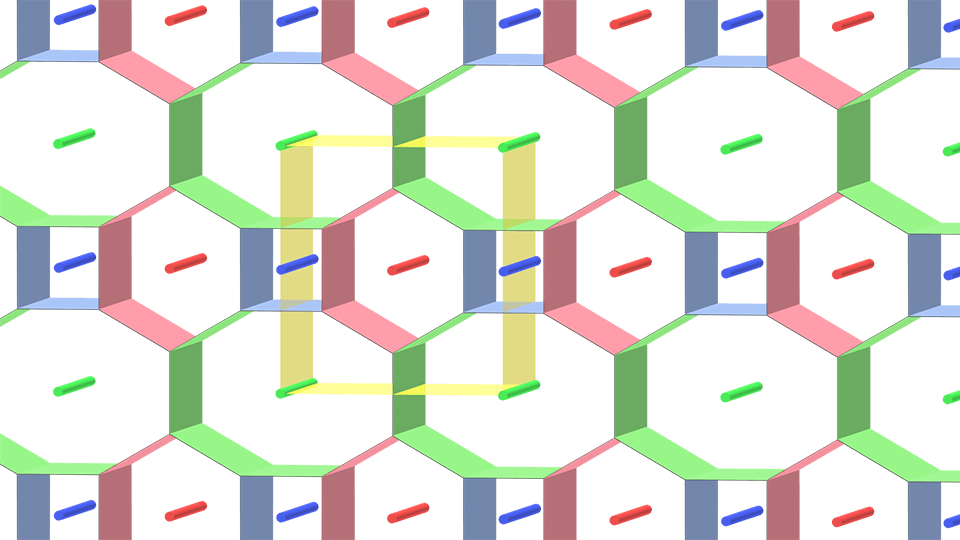}\\
        (a)
    \end{minipage}
    \begin{minipage}[b]{0.49\textwidth}
        \centering
        \includegraphics[width=.90\textwidth]{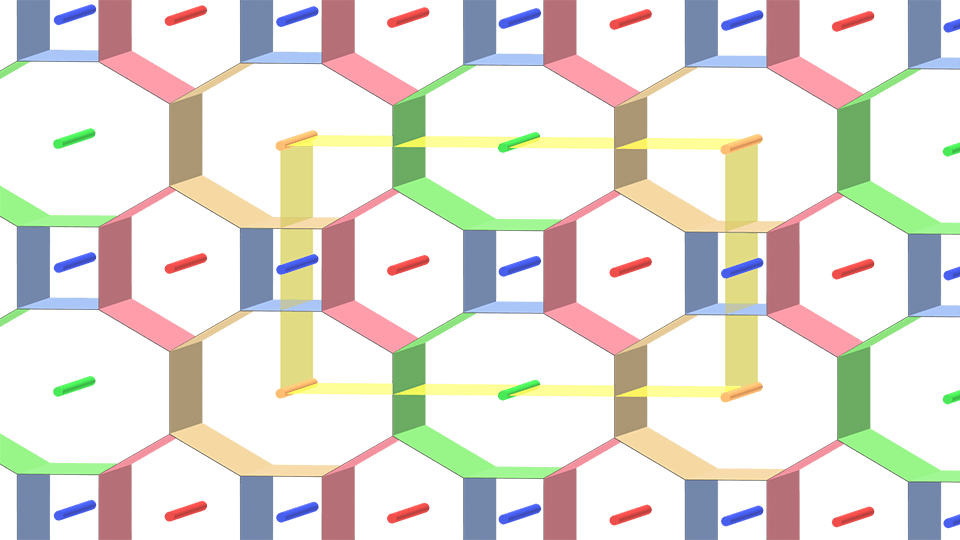}\\
        (b)
    \end{minipage}
    \caption{(a) A simple non-effectively colored \patt of type $(1, 1, 1)$. (b) A simple colored \patt  of type $([1, 1], [1, 1], 1, 1)$. The handlebody decomposition induced by the pattern contains two blue solid tori and two red solid tori.}
    \label{fig:11_11_1_1}
\end{figure}

\begin{ex}\label{ex:1111}
    Figure~\ref{fig:1111} illustrates two simple colored \patts, $(\wtP_a, \rho_a)$ and $(\wtP_b, \rho_b)$, of type $(1, 1, 1, 1)$ with a cubical fundamental domain, where $\rho_a$ and $\rho_b$ denote their frames compatible with the colorings, respectively.
    We can see the two patterns are not equivalent as follows.
    Let $\widetilde{X}_a$ and $\widetilde{X}_b$ be nets associated with $\wtP_a$ and $\wtP_b$.
    We consider the image $(\iota_a)_\ast(\fund(\rho_a(\widetilde{X}_a)))$ and $(\iota_b)_\ast(\fund(\rho_b(\widetilde{X}_b)))$, where $\iota_a$ and $\iota_b$ are the inclusion maps, respectively.
    By Figure~\ref{fig:1111}(a) $(\iota_a)_\ast(\fund(\rho_a(\widetilde{X}_a)))$ is isomorphic to $\mathbb{Z} \oplus \mathbb{Z}$.
    On the other hand, $(\iota_b)_\ast(\fund(\rho_b(\widetilde{X}_b)))$ is isomorphic to $\mathbb{Z} \oplus \mathbb{Z} \oplus \mathbb{Z}$ by Figure~\ref{fig:1111}(b).
    Hence, $\wtP_a$ is not equivalent to $\wtP_b$.

    By Theorem~\ref{thm:honeycomb}, any two simple colored framed \patts of type $(1, 1, 1)$ are equivalent.
    However, simple colored \patts of type $(1, 1, 1, 1)$ are not unique.
\end{ex}

\begin{figure}[htbp]
    \begin{minipage}[b]{0.3\linewidth}
        \centering
        \includegraphics[width=.95\textwidth]{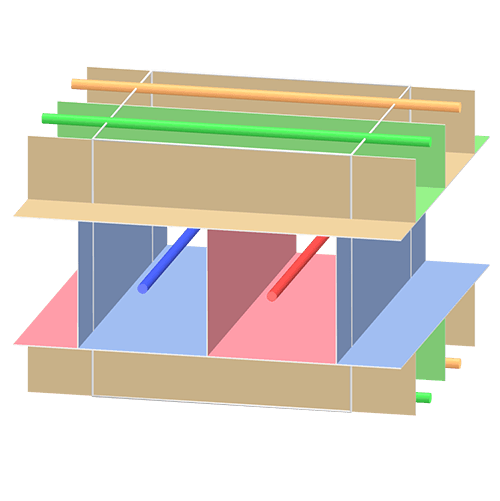}\\
        (a)
    \end{minipage}
    \begin{minipage}[b]{0.3\linewidth}
        \centering
        \includegraphics[width=.95\textwidth]{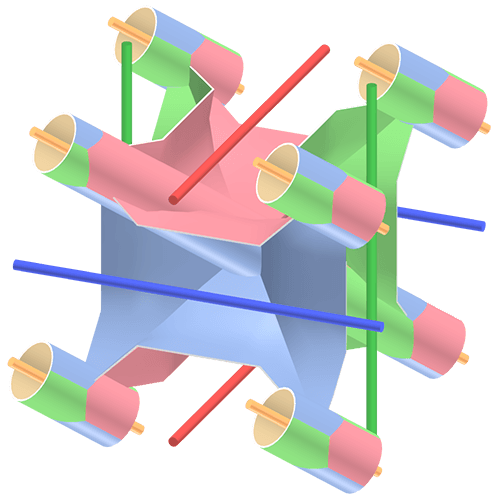}\\
        (b)
    \end{minipage}
    \begin{minipage}[b]{0.3\linewidth}
        \centering
        \includegraphics[width=.95\textwidth]{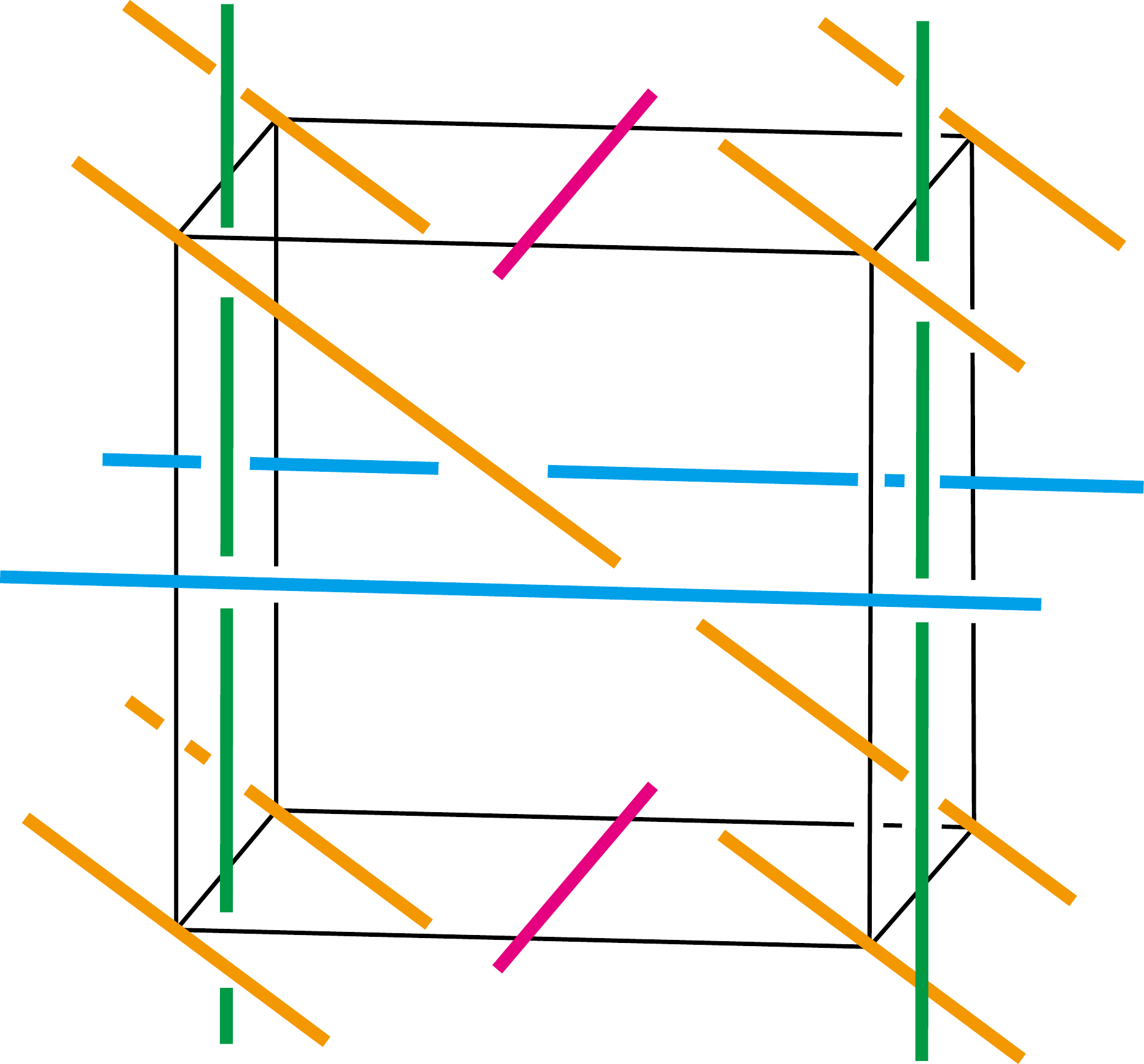}\\
        (c)
    \end{minipage}
    \caption{(a) and (b) Two framed simple colored \patts of type $(1, 1, 1, 1)$. (c) The labyrinthine nets of (b).}
    \label{fig:1111}
\end{figure}

\ifRSPA
\else

\appendix

\section{The independence of stabilizations}
\label{app:independence}

The stabilizations introduced in Definition~\ref{dfn:stabilization} are not independent under certain conditions.

\begin{prop}
For $n \geq 3$, a type-$0$ stabilization is generated by a type-1 stabilization and a type-2 stabilization.
\end{prop}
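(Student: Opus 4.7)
The plan is to realize any type-$0$ stabilization as the composition of one type-$2$ stabilization followed by one type-$1$ stabilization. Fix a type-$0$ stabilization of $(H_1,\ldots,H_n;P)$ along an arc $\alpha\subset H_i$ with endpoints $p_1,p_2$ in the interior of $F_{ij}$. Since $n\geq 3$ and $P$ is a connected simple polyhedron, $\partial F_{ij}$ is nonempty and every triple edge on $\partial F_{ij}$ separates $F_{ij}$ from sectors of the form $F_{ik}$ and $F_{jk}$ for some $k\neq i,j$. I choose such a $k$ and, after an ambient isotopy of $\alpha$ inside $H_i$ that does not change the isotopy class of the resulting stabilized decomposition, I may assume that $p_1$ lies close to a fixed triple edge between $F_{ij}$ and $F_{ik}$.

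Next I would carry out the two operations explicitly. Let $\alpha_1\subset H_i$ be the arc obtained from $\alpha$ by pushing the endpoint $p_1$ across the chosen triple edge into a nearby point $q\in F_{ik}$. Then $\alpha_1$ is still boundary-parallel in $H_i$ with endpoints on $F_{ij}$ and $F_{ik}$, so by Definition~\ref{dfn:stabilization}(2) I may perform a type-$2$ stabilization along $\alpha_1$, transferring the tube $N(\alpha_1)$ to $H_j$. This raises $g_i$ by one and creates a ``finger'' of $H_j$ along $\alpha_1$ whose terminal disk $D_q$ becomes a new component of $F_{jk}$. I then perform a type-$1$ stabilization along a short arc $\gamma\subset F_{ik}$ joining the newly created triple circle $\partial D_q$ to the old triple edge near $p_1$; both endpoints of $\gamma$ lie on $\partial H_j$, so Definition~\ref{dfn:stabilization}(1) applies and the operation raises $g_j$ by one. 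The resulting genera match the type-$0$ outcome $(g_i+1,g_j+1)$.

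The part I expect to require the most care is verifying that the decomposition produced by this two-step process is ambient isotopic to the one produced by the type-$0$ stabilization along $\alpha$. I would check the three affected handlebodies separately. For $H_j$, the union $N(\alpha_1)\cup N(\gamma)$ is a tube from $p_2$ to $p_1$ that, rel its feet on $\partial H_j$, is isotopic in $H_i$ to the original tube $N(\alpha)$; hence $H_j\cup N(\alpha_1)\cup N(\gamma)$ is isotopic to $H_j\cup N(\alpha)$. For $H_k$, the region $N(\gamma)\cap H_k$ removed by the type-$1$ stabilization is a half-ball attached to $\partial H_k$ along a single disk, so $H_k\setminus N(\gamma)$ is isotopic to $H_k$, matching the fact that the type-$0$ stabilization leaves $H_k$ fixed. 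For $H_i$, the removed region $N(\alpha_1)\cup(N(\gamma)\cap H_i)$ is a regular neighborhood of an arc isotopic to $\alpha$ rel endpoints, so the new $H_i$ is isotopic to $H_i\setminus N(\alpha)$. Assembling these three isotopies produces the required ambient isotopy of the entire decomposition, completing the argument.
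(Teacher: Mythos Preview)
Your proof is correct and follows essentially the same approach as the paper: both perform a type-$2$ stabilization first (pushing one endpoint of $\alpha$ across a triple edge into $F_{ik}$ to create a finger of $H_j$ ending in a new disk of $F_{jk}$) and then a type-$1$ stabilization along a short arc in $F_{ik}$ connecting the new triple circle back to the original triple edge. The only cosmetic difference is that the paper keeps $\alpha$ fixed and builds the type-$2$ arc by routing around a neighborhood $N(\delta)$ of an arc $\delta\subset F_{ij}$ reaching $\partial F_{ik}$, whereas you first isotope $\alpha$ so that $p_1$ is already near the triple edge; your final verification of the isotopy between the two decompositions is in fact more detailed than the paper's, which simply asserts that the equivalence is ``easily seen.''
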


\begin{proof}
    Let $(H_1, \ldots, H_n; P)$ be a handlebody decomposition.
    Take a properly embedded arc in $H_i$ whose endpoints are in the interior of $F_{ij}$.
    Let $\beta$ be an arc in $\partial H_i$ such that $\partial \alpha = \partial \beta$, and $\alpha \cup \beta$ bounds a disk in $H_i$.
    We will realize a type-$0$ stabilization along $\beta$ by a type-$1$ stabilization and a type-$2$ stabilization.
    Since, by the definition of a handlebody decomposition, the partition $P$ is connected, there exists a handlebody $H_k$ that intersects both $H_i$ and $H_j$ ($k \neq i, j$).
    Thus, we can take an arc $\delta$ in $F_{ij}$ joining a point of $\partial F_{ik}$ and an endpoint of $\beta$ so that $\delta$ does not intersect $\beta$ except for the endpoint of $\beta$.
    See Figure~\ref{fig:type12to0}.
    Then, the boundary of a regular neighborhood $N(\delta)$ of $\delta$ in $H_i$ intersects $\alpha$ transversely in exactly one point.
    We denote the sub-arc $\alpha \setminus \inte(N(\delta))$ by $\alpha^\prime$.
    Since $N(\delta) \cap \inte(F_{ik})$ is not empty, there exists an arc $\gamma$ in $\partial N(\delta) \setminus \partial H_i$ joining a point $p$ in $\inte(F_{ik})$ and an endpoint of $\alpha^\prime$.
    Let $\delta^{\prime\prime}$ be an arc in $F_{ik}$ joining $p$ and $\partial \delta \cap \partial F_{ik}$, and $\delta^\prime$ the union of $\delta$ and $\delta^{\prime\prime}$.
    Then, the union of arcs $\alpha^\prime$, $\gamma$, $\delta^\prime$, and $\beta$ bounds a disk in $H_i$.
    Hence we can perform a type-2 stabilization along $\alpha\upr \cup \gamma$.
    Then a new disk intersection occurs between $H_k$ and $H_j$, and the boundary of the disk contains an endpoint of $\delta^{\prime\prime}$.
    Hence, we can perform a type-$1$ stabilization along $\delta^{\prime\prime}$.
    It is easily seen that the handlebody decomposition thus obtained is the one obtained from $(H_1, \ldots, H_n; P)$ by a type-$0$ stabilization along $\alpha$.
\end{proof}

\begin{figure}[htbp]
    \centering
    \includegraphics[scale=0.34]{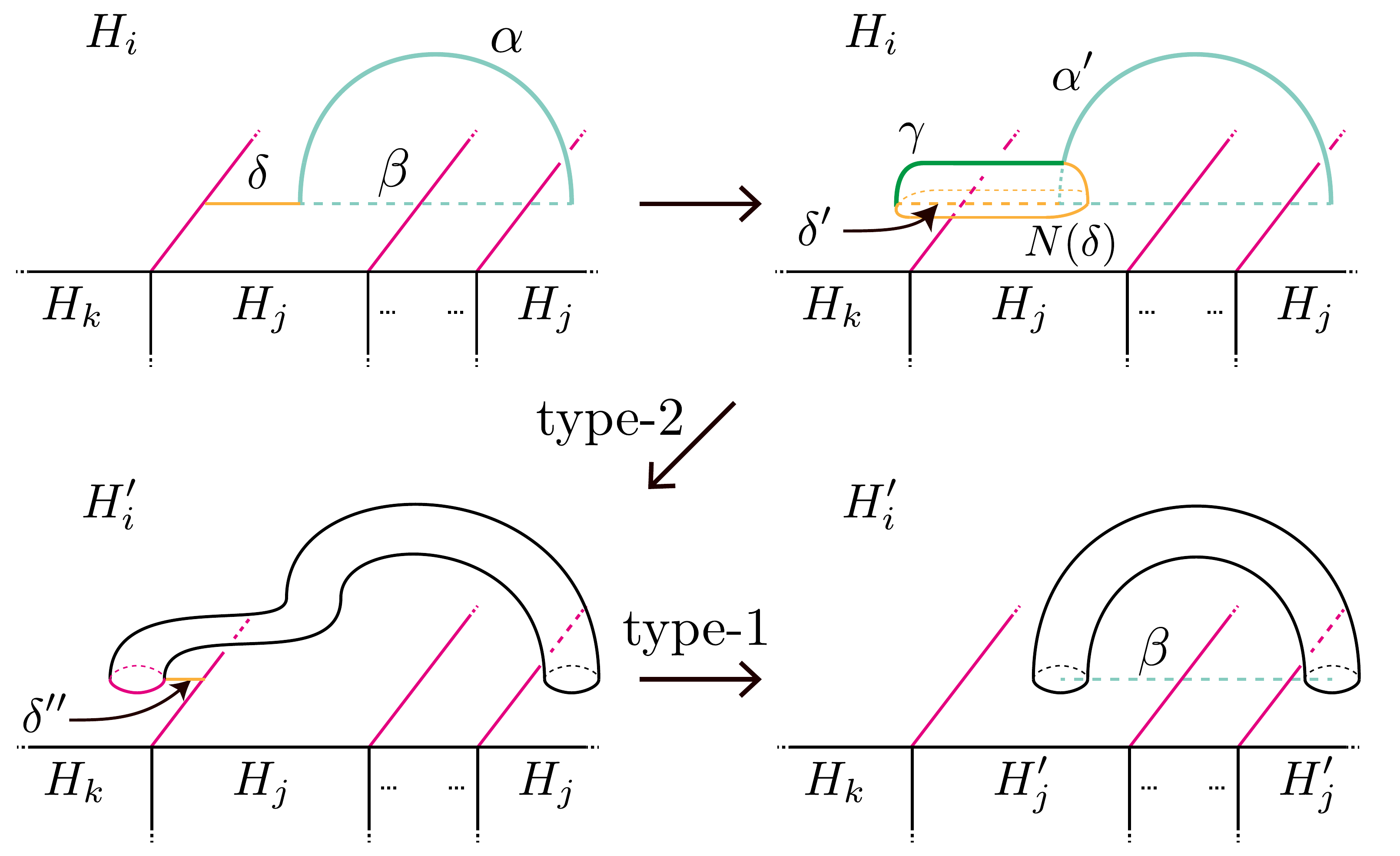}
    \caption{A type-$0$ stabilization can be obtained by a combination of a type-$2$ stabilization and a type-1 stabilization.}
    \label{fig:type12to0}
\end{figure}

Note that each of a type-1 and a type-2 stabilizations increases the genus of exactly one handlebody.
A type-1 stabilization can be realized by a type-2 stabilization if and only if the endpoints of an arc $\alpha$ as in Definition~\ref*{dfn:stabilization}(1) are contained in the same edge of the singular graph of the partition.
Conversely, a type-2 stabilization can be realized by a type-1 stabilization if and only if an arc $\beta$ as in Definition~\ref*{dfn:stabilization}(2) is isotopic to an arc intersecting the singular graph only once.


\section{Destabilizations of \patts}
\label{app:destab}

We give sufficient conditions for performing destabilization on \patts.

Let $(\wtP, \pi)$ be a simple, $n$-colored, framed \patt of type $(\mathfrak{g}_1, \ldots, \mathfrak{g}_n)$, where $\mathfrak{g}_i$ is a sequence of positive integers $[g\ssn{i}_{1}, \ldots, g\ssn{i}_{m_i}]$ for $1 \leq i \leq n$.
Put $P = \pi(\wtP)$.
By Corollary~\ref{cor:effective_pattern2decomp}, $P$ gives a simple proper handlebody decomposition $\mathcal{H} = (H\ssn{1}_1,$ $\ldots,$ $H\ssn{1}_{m_1},$ $H\ssn{2}_{1},$ $\ldots,$ $H\ssn{2}_{m_2}, \ldots, H\ssn{n}_1, \ldots, H\ssn{n}_{m_n}; P)$ of $\Torus$ such that $H\ssn{i}_j$ is a genus-$g\ssn{i}_j$ handlebody colored by $i$.
Let $U$, $V$, and $W$ be labyrinthine domains such that, for each pair of them, the two domains are different and share a sector.
We further assume that $\pi(U) = H\ssn{i_U}_{j_U}$, $\pi(V) = H\ssn{i_V}_{j_V}$, and $\pi(W) = H\ssn{i_W}_{j_W}$.
Here, $H\ssn{i_U}_{j_U}$, $H\ssn{i_V}_{j_V}$, and $H\ssn{i_W}_{j_W}$ are distinct handlebodies.
We denote by $\mathcal{T}(\wt{X})$ the disjoint union of the images of a lifted set $\wt{X}$ under all the covering transformations.
Depending on the type of destabilization, we take a disk $\wt{D}$ as follows.
\begin{enumerate}[label=(type-\arabic*),align=parleft,leftmargin=*,itemindent=37.21pt,itemsep=1mm,start=0]
    \item The disk $\wt{D}$ is a properly embedded lifted disk in $U$.
        We assume that there exists a properly embedded lifted disk $\wt{E}$ in $V$ so that the following conditions hold:
        \begin{enumerate}[label=(\roman*), leftmargin=*]
            \item The boundary of $\wt{D}$ is in a sector shared by $U$ and $V$.
            \item The intersection of $\wt{D}$ and $\wt{E}$ is exactly one point.
            \item The set $U \setminus \mathcal{T}(\wt{D})$ does not contain a bounded component.
        \end{enumerate}
    \item The disk $\wt{D}$ is a properly embedded lifted disk in $V$ satisfying the following conditions:
        \begin{enumerate}[label=(\roman*), leftmargin=*]
            \item The boundary of $\wt{D}$ intersects the singular graph of $\wtP$ transversely in exactly two points.
            \item The disk $\pi(\wt{D}) \subset H\ssn{i_V}_{j_V}$ is non-separating.
            \item The set $V \setminus \mathcal{T}(\wt{D})$ does not contain a bounded component.
        \end{enumerate}
    \item The disk $\wt{D}$ is a disk sector of $\wtP$ shared by $U$ and $W$.
        We assume that there exists a properly embedded lifted disk $\wt{E}$ in $V$ such that the boundary of $\wt{D}$ intersects $\wt{E}$ transversely once.
\end{enumerate}
Then, we can obtain a new handlebody decomposition $\mathcal{H}\upr$ performed by a suitable destabilization along $\pi(\wt{D})$ on $\mathcal{H}$.
The preimage of the partition of $\mathcal{H}\upr$ is a simple, colored, framed \patt.
We call this operation a \emph{type-$k$ destabilization (along $\wt{D}$ with respect to $\pi$)} for each $k$.
(Note that \patts do not contain a bounded component.
In each case of type-$0$ and type-$1$, condition~{(iii)} guarantees that the complement of the partition of $\mathcal{H}\upr$ consists of unbounded components.)

\fi

\ifRSPA
\ack{%
\else
\section*{Acknowledgment}
\fi
The authors would like to thank Professor Mikami Hirasawa for his practical advice and beautiful lecture on nets and polygons.
In particular, we learned the coordinates of the vertices of the srs net from him.
The last author would like to thank Professors Takeshi Aoyagi and Katsumi Hagita for their precious comments on the polymer scientific insight.
\ifRSPA
}
\funding{%
\fi
This research is supported by MEXT Grants-in-Aid for Scientific Research on Innovative Areas (JP17H06460 and JP17H06463) and JSPS KAKENHI Grant Number JP21H00978.
\ifRSPA
}
\fi

\bibliography{refs} 
\bibliographystyle{abbrv}


\end{document}